\renewcommand{\leq}{\leqslant}
\renewcommand{\geq}{\geqslant}
\newcommand{\innerl}[2]{\langle #1, #2 \rangle}
\newcommand{\innerll}[2]{\left\langle #1, #2 \right\rangle}
\newtheorem{theorem}{Theorem}[section]
\newtheorem{proposition}{Proposition}[section]
\newtheorem{corollary}{Corollary}[section]
\newtheorem{lemma}[theorem]{Lemma}
\newtheorem{remark}{Remark}
\newtheorem{assumption}{Assumption}
\DeclareMathOperator{\ess}{ess}
\DeclareMathOperator{\E}{\mathbb{E}\,}
\DeclareMathOperator{\tr}{tr}
\DeclareMathOperator{\diag}{diag}
\DeclareMathOperator{\cond}{cond}
\numberwithin{equation}{section}
\theoremstyle{plain}
\begin{document}

\begin{frontmatter}
\title{Non asymptotic estimation lower bounds for LTI state space models with Cram\'er-Rao and van Trees}
\runtitle{Estimation lower bounds for LTI state space models}
%\thankstext{T1}{Footnote to the title with the ``thankstext'' command.}

\begin{aug}
\author{\fnms{Boualem } \snm{Djehiche}\thanksref{}\ead[label=e1]{boualem@kth.se,othmane@kth.se}}
\and 
\author{\fnms{Othmane } \snm{Mazhar}\thanksref{}\ead[label=e2]{othmane@kth.se}}

%\thankstext{t1}{Some comment}
%\thankstext{t2}{First supporter of the project}
%\thankstext{t3}{Second supporter of the project}
\runauthor{B. Djehiche and O. Mazhar.}

\affiliation{Department of Mathematics, KTH Royal Institute of Technology\thanksmark{m1}}

\address{
Department of Mathematics\\
KTH Royal Institute of Technology\\
100 44 Stockholm, Sweden.\\
\printead{e1}\\
\phantom{E-mail:\ }}

%\address{Boualem Djehiche and Othmane %Mazhar\\
%Department of Mathematics\\
%KTH Royal Institute of Technology\\
%100 44 Stockholm, Sweden.\\
%\printead{e1}\\
%\phantom{E-mail:\ }}

%\address{Othmane Mazhar\\
%Department of Mathematics\\
%KTH Royal Institute of Technology\\
%SE-100 44 Stockholm, Sweden.\\
%\printead{e2}\\
%\phantom{E-mail:\ }}

\end{aug}

\begin{abstract}
We study the estimation problem for linear time-invariant (LTI) state-space models with Gaussian excitation of an unknown covariance. We provide non asymptotic lower bounds for the expected estimation error and the mean square estimation risk of the least square estimator, and the minimax mean square estimation risk. These bounds are sharp with explicit constants when the matrix of the dynamics has no eigenvalues on the unit circle and are rate-optimal when they do. Our results extend and improve existing lower bounds to lower bounds in expectation of the mean square estimation risk and to systems with a general noise covariance. 

Instrumental to our derivation are new concentration results for rescaled sample covariances and deviation results for the corresponding multiplication processes of the covariates, a differential geometric construction of a prior on the unit operator ball of small Fisher information, and an extension of the Cram\'er-Rao and van Trees inequalities to matrix-valued estimators.
\end{abstract}

% \begin{keyword}[class=MSC]
% \kwd[Primary ]{60K35}
% \kwd{60K35}
% \kwd[; secondary ]{60K35}
% \end{keyword}

\begin{keyword}
\kwd{linear time invariant state space models, least squares, on asymptotic estimation, minimax risk, sample complexity, sample covariance, multiplication process, concentration inequality, Cram\'er-Rao, van Trees inequality, Fisher information}
\end{keyword}
\end{frontmatter}

\section{Introduction}

\subsection{Statement of the problem}
We consider a linear time invariant state space model parametrized over the set $\Omega = \mathcal{M}_{d \times d}(\mathbb{R})$ of $d\times d$ matrices over  $\mathbb{R}$, with a Gaussian noise $B\varepsilon_i$ where $B \in \mathcal{M}_{d \times d}(\mathbb{R})$ is a square full rank unknown matrix. The model is given by
\begin{equation}\label{Eq: state space model}
    x_{i+1} = A x_i + B\varepsilon_i
\end{equation}
for some parameter matrix $A \in \Omega$, where $(\varepsilon_i)_0^{N-1}$ is a sequence of \emph{i.i.d.} multivariate normal $\mathcal{N}(0,I_d)$. For simplicity we take  $x_0 = 0$. Expending \eqref{Eq: state space model}, we obtain the following expression for the observed covariates for $i \in \llbracket 1, N \rrbracket $ 
\begin{equation*}
    x_i = \sum \limits_{k = 0}^{i-1} A^{i-1-k}B\varepsilon_k.
\end{equation*}
 
Given the observations, the least square estimator for the parameter matrix $A$ is given as\footnote{In Lemma \ref{invertiblity of the sample covariance} below, we show that the sample covariance matrix for the model \eqref{Eq: state space model} is invertible with probability one.}  
\begin{align}\label{LS}
    \hat{A}_{\text{LS}} &= \underset{A \in \mathcal{M}_{d \times d}(\mathbb{R})}{\arg\min}\sum \limits_{i= 1}^N |x_i - Ax_{i-1}|_2^2 
    = \left(\sum \limits_{i=1}^N x_{i}x_{i-1}^*\right)\left(\sum \limits_{i=1}^N x_{i-1}x_{i-1}^*\right)^{-1},
\end{align}
where $y^*$ denotes the  transpose of the vector $y$ and $|y|^2_{2} :=\sum_{i=1}^n |y_i|^2$.

In this paper we provide non asymptotic lower bounds for the estimation risks $\mathcal{E}(\hat{A}_{\text{LS}},A)$, $\mathcal{E}_2(\hat{A}_{\text{LS}},A)$  and the minimax mean square estimation risk $\mathcal{E}_2(\mathcal{C}_s)$  over the class $\mathcal{C}_s$ of matrices $A$ whose singular values are larger than $s\ge 0$,  defined as follows. For a given estimator $\hat{A}$ of $A \in \mathcal{M}_{d \times d}(\mathbb{R})$, we define the expected estimation error as 
\begin{equation*}
    \mathcal{E}(\hat{A},A) = \E(\hat{A}-A)(\hat{A}-A)^*,
\end{equation*}
and the mean square estimation risk as
\begin{equation*}
    \mathcal{E}_2(\hat{A},A) = \E(|\hat{A}-A|_{S_2}^2) = \tr\left(\mathcal{E}(\hat{A},A) \right),
\end{equation*}
where $W^*$ is the (conjugate) transpose of the matrix $W$ and $|W|_{S_2} = (\tr(W^* W))^{1/2}$.

\noindent Furthermore, given a class of parameters $\mathcal{C} \subset \mathcal{M}_{d \times d}(\mathbb{R})$ we define the worst case mean square risk over the given class as 
\begin{equation*}
    \mathcal{E}_2(\hat{A},\mathcal{C}) := \sup \limits_{A \in \mathcal{C}} \mathcal{E}_2(\hat{A},A).
\end{equation*}
Finally, we define the minimax mean square estimation risk over the class $\mathcal{C}$ by
\begin{equation*}
    \mathcal{E}_2(\mathcal{C}) := \inf \limits_{\hat{A}} \sup \limits_{A \in \mathcal{C}} \mathcal{E}_2(\hat{A},A), 
\end{equation*}
where the infimum is taken over all $(x_1,\dots,x_N)$-measurable functions.  

Before we highlight the content of the paper, we give a short review of related results in the literature and describe the main contribution of the present study. 
\subsection{Discussion of the related literature}
The literature on the estimation of LTI state space models is very rich and its complete overview falls beyond the scope of this paper. Instead, we focus here on recent results on non asymptotic estimation lower bounds for the matrix $A$, which to the best of our knowledge represent the state of the art on the topic. We also mention a few results on non asymptotic upper bounds to figure out at least qualitatively  which lower bounds are reasonable for this estimation problem.

\begin{itemize}
    \item Non asymptotic lower bounds on the estimation risk appeared first in ~\cite[Theorem ~$2.3$]{pmlr-v75-simchowitz18a}. This work provides non asymptotic high probability minimax lower bounds on the estimation problem for the matrix $A$, in the case where the matrix $B = I_d$, in term of the operator norm. These minimax lower bounds were provided for the sub-classes of the orthogonal matrices $O(d)$ parametrized as $\mathcal{C}_{\rho} =\left\{ \rho O \ \big| \ O \in O(d) \right\}$ for some $\rho \geq 0$, and they show the existence of three different possible decay rates depending on the value of $\rho$ as follows:\\
    Set $\gamma_N(\rho) := \sum \limits_{i=0}^{N-1}\rho^{2i}$. Then, for all $\hat{A}$ and $\delta \in (0\ 1/4)$, there exists $O \in O(d)$ such that with probability at least $\delta$ we have
    \begin{equation*}
        |\hat{A} - \rho O|^2_{S_\infty} \geq C\frac{d+ \log(1/\delta)}{N\gamma_N(\rho)},   
    \end{equation*}
    where $|W|_{S_{\infty}} = \underset{|x|_2\le 1}{\max}|Wx|_2$. 
    
    This bound is to be understood as a minimax bound in the sense that  for any estimator $\hat{A}$ it provides a matrix $\rho O$ for which the probability of estimation failure is greater than $\delta$. This result also shows the existence of three different decay rates at least when estimating scaled orthogonal matrices: for stable matrices ($\rho \leq 1-1/N$) where $A$ cannot be estimated faster than $\frac{C}{N}(d+ \log(1/\delta))(1-\rho^2)$, for limit stable matrices ($\rho \in (1-1/N,\ 1+1/N)$) where $A$ cannot be estimated faster than $\frac{C}{N^2}(d+ \log(1/\delta))$, and for unstable matrices ($\rho \geq 1+1/N $) where $A$ cannot be estimated faster than $C\frac{d+ \log(1/\delta)}{N\rho^{2N}}$.
    \item To extend these results beyond the class of scaled orthogonal matrices in the same setup, \cite{Jedra19} provides high probability problem specific lower bounds in the sense that the bound specifically depends on the parameter matrix $A$. For the purpose of establishing such a bound, some conditions on the estimator have to be imposed. To this end they introduce the notion of $(\epsilon,\delta)$-locally stable estimator in $A$. An estimator is said to be $(\epsilon,\delta)$-locally stable in $A$ if there is an $N$ such that for all $ n \geq N$ and all
    $D \in B(0,3\epsilon) = \{D  \in \mathcal{M}_{d\times d}(\mathbb{R}):\,\, |A-D|_{S_2} \leq 3 \epsilon \}$, $\mathbb{P}(|\hat{A}-A|_{S_2} \leq \epsilon)\geq \delta$. They define the notion of sample complexity as the infimum over such $N$s and characterise it as the smallest $N$ satisfying 
    \begin{equation*}
        \lambda_{\min}\left(\sum \limits_{i=1}^{N-1} (N-i)A^{i-1}A^{*i-1}\right) \geq \frac{1}{2\epsilon^2 }\log\left(\frac{1}{2.4\delta}\right),
    \end{equation*}
    where $\lambda_{\text{min}}(W)$ denotes the smallest eigenvalue of the symmetric matrix $W$.

    \item For the upper bound, to the best of our knowledge, the only papers which addressed the non asymptotic estimation of the matrix $A$ with stable, unstable, and limit stable parts are \cite{pmlr-v97-sarkar19a,SHIRANIFARADONBEH2018342} and their results are obtained for the operator norm. We refer to ~\cite[Theorem ~$1$]{pmlr-v97-sarkar19a} for the exact statement of these results and the conditions under which they hold. These results confirm the existence of three decay rates in the non asymptotic case if the least square estimator is used to estimate $A$. Indeed, for a stable matrix in the sense that $|A|_{S_\infty} \leq 1-1/N$, one obtains with probability at least $1 - \delta$
    \begin{equation*}
        |\hat{A}_{\text{LS}}- A|^2_{S_\infty} \leq C\frac{d\log(Nd/\delta)}{N} \quad \text{for} \quad N \geq Cd\log(Nd/\delta).
    \end{equation*}
    If the matrix has only a limit stable part meaning  that $1-1/N \leq s_{\min}(A)\leq |A|_{S_\infty} \leq 1+1/N$, where $s_{\text{min}}(A)$ is the least singular value of $A$, they obtain with probability at least $1 - \delta$
    \begin{equation*}
        |\hat{A}_{\text{LS}}- A|^2_{S_\infty} \leq C\frac{h(d)\log^2(Nd/\delta)}{N^2} \quad \text{with $h(d)$ a function of the dimension}.
    \end{equation*}
    If the matrix has only an unstable part in the sense that $|A|_{S_\infty} \geq 1+1/N$, with probability at least $1 - \delta$, the quantity $|\hat{A}_{\text{LS}}- A|^2_{S_\infty}$ decreases exponentially like $|A|^{-2N}_{S_\infty}$, while it is not clear how this decay scales with the other parameters of the problem. 
    Theorem 2 in ~\cite{pmlr-v75-simchowitz18a} on the other hand shows that in the case of matrices with more than one part we obtain the worst case behavior.
    \item The asymptotic study of the estimation problem dates back at least to the eighties where the focus was on the consistency properties of the least square solution when estimating autoregressive processes. Some of the most notable papers are \cite{10.1214/aos/1176345697,LAI1982346,LAI19831}. These results are relevant since autoregressive processes are a special case of LTI state space models with $A$ given by the corresponding companion matrix. In general these results show that the least square estimator $\hat{A}_{\text{LS}}$ converges almost surely to $A$ and a careful investigation of the proofs suggests that the stable part converges at a rate $N^{-1}$, the limit stable part converges at a rate of at least $N^{-2}$ and the unstable part converges at a rate $|A|^{-2N}_{S_\infty}$. 
\end{itemize}

\subsection{Main contribution}
From the previous discussion on the lower bound we note that even though the results of \cite{Jedra19} are valid for all matrices not just scaled orthogonal ones as opposed to \cite{pmlr-v75-simchowitz18a}, the obtained bound does not account for the dimension $d$. The minimax rate obtained by \cite{pmlr-v75-simchowitz18a} on the other hand while capturing the dimension factor, holds only for scaled orthogonal matrices which is a small subset of matrices. Furthermore, they do not indicate clearly the dependence of the minimax rate on the spectral properties of the matrix since all the eigenvalue of a scaled orthogonal matrix have the same magnitude. This information is also missing in the upper bounds obtained in \cite{pmlr-v97-sarkar19a} and in the asymptotic studies of \cite{10.1214/aos/1176345697,LAI1982346,LAI19831}. Indeed, there is no mention of the spectral properties in the rates given by \cite{pmlr-v97-sarkar19a} and the dependence on the dimension is off by a $\log(Nd)$ factor at best. The spectral properties disappear in the papers \cite{10.1214/aos/1176345697,LAI1982346,LAI19831} when taking the limit and the use of inequalities like 
\begin{equation*}
    \frac{1}{d} \min \limits_{i \in \llbracket1 , d \rrbracket} |c_i(X_n)-\hat{c}_i(X_n)|_2 \leq \lambda_{\min}\left(\sum \limits_{i=1}^{N-1} x_ix_i^* \right) \leq d \min \limits_{i \in \llbracket 1 ,d \rrbracket} |c_i(X_n)-\hat{c}_i(X_n)|_2
\end{equation*}
shown in ~\cite[Equation ~$(3.6)$]{LAI1982346} prevents from getting a clear intuition on what is the right dependence on the dimension.

All of the non asymptotic results discussed above were derived for the case $B = I_d$ which results in an identity variance for the model \eqref{Eq: state space model}. If we take $B = \sigma I_d$, we obtain for the least square estimator 
\begin{align*}
    &\E\left(|\hat{A}_{\text{LS}}-A|_{S_2}^2\right) = \E\left(\left|\left(\sum \limits_{i=1}^N x_{i}x_{i-1}^*\right)\left(\sum \limits_{i=1}^N x_{i-1}x_{i-1}^*\right)^{-1}-A\right|_{S_2}^2\right)\\
    &= \E\left(\left|\left(\sum \limits_{i=1}^N (Ax_{i-1}+\sigma \varepsilon_{i-1})x_{i-1}^*\right)\left(\sum \limits_{i=1}^N x_{i-1}x_{i-1}^*\right)^{-1}-A\right|_{S_2}^2\right)\\
    &= \E\left(\left|\sum \limits_{i=1}^N  \varepsilon_{i-1}\frac{x_{i-1}^*}{\sigma}\left(\sum \limits_{i=1}^N \frac{x_{i-1}}{\sigma}\frac{x_{i-1}^*}{\sigma}\right)^{-1}\right|_{S_2}^2\right)
\end{align*}
which is independent of the noise variance since $\frac{x_i}{\sigma} = \sum \limits_{k = 0}^{i-1} A^{i-1-k}\varepsilon_k$.

Inspired by the above discussion, the present work provides lower bounds for the three types of risk mentioned above which address the following questions:
\begin{itemize}
    \item All the non asymptotic bounds present results with high probability for the operator norm. A valid question is: Do similar decay rates hold in expectation and for the mean square estimation risk?
    \item How does the lower bound on the mean square estimation risk depend on the dimension of the model? 
    \item How does the noise variance structure affect the estimation risk lower bound for the least square estimator? Is there any minimax risk lower bound which is independent of the noise variance?
    \item Do spectral properties of the matrices $A$ and $B$ affect the mean square estimation risk lower bound?
\end{itemize}
Based on the above discussion and the mentioned literature, a sharp lower bound for the minimax mean square estimation error over the class $\mathcal{C}_s$ of $d\times d$-matrices over $\mathbb{R}$ whose least singular values are larger than $s\ge 0$ would satisfy  $\mathcal{E}_2(\mathcal{C}_s) \geq \frac{d^2(1-s^2)}{N}.$ if  $s \in [0, 1)$, $\mathcal{E}_2(\mathcal{C}_s) \geq \frac{d^2}{s^{2N}}.$ if $s > 1$, and $\mathcal{E}_2(\mathcal{C}_1) \geq \frac{2d^2}{N(N-1)}$ if $s = 1$.
This lower bound has the right dependence on the dimension since we are estimating $d^2$ parameters, is independent of the noise covariance matrix which is consistent with the case $B = \sigma I_d$,  match the asymptotic rates while removing the extra logarithmic factors that appear in the non asymptotic upper bounds, and has sharp leading multiplicative constants that are consistent with the best known bounds. These properties should extend to a sharp lower bound for the expected estimation error and the mean square estimation risk for the least square estimator.

In short, our results answer the above questions by providing sharp lower bounds in that sense. Indeed, for the expected estimation error for the least square estimator our lower bound in Theorem \ref{Main lower bound on the risk of the least square estimator} retrieves the missing dimension factor in \cite{Jedra19} and unlike ~\cite[Theorem ~$2.3$]{pmlr-v75-simchowitz18a} it applies to all matrices not just orthogonal ones. The bound on the mean square estimation risk obtained in \eqref{simple LB} is a simplification of the one obtained in Theorem \ref{Main lower bound on the risk of the least square estimator}. It shows that the least square estimator has a decay rate lower bound whose main part depends only on the operator norm and no other spectral property on the matrix $A$ and is independent on the noise covariance structure implied by $B$. The existing lower bounds so far failed to clarify this dependence since they consider only the case $B = I_d$ and provided examples for scalar systems or scaled orthogonal matrices whose eigenvalues have the same magnitude. Propositions \ref{No limit stable} and \ref{with limit stable} provide an illustrative example for diagonalizable matrices where the spectral property of having a limit stable part affects the dependence on the dimension. They also show how the estimation risk for the least square estimator depends to a lesser extent on the noise covariance properties through its condition number and on other spectral properties of the matrix $A$ such as  its closest eigenvalue to the unit circle. Finally, sharp minimax rates over the class $\mathcal{C}_s$, given in Corollary \ref{Explicit minimax}, show the existence of the three decay rates for all estimators which were previously known only for the least square estimator from the upper bounds in \cite{pmlr-v75-simchowitz18a} and in the very specific cases of lower bounds for scalar systems and scaled orthogonal matrices. These minimax lower bounds show also the existence of rates that are independent of the matrix $B$ and depend only on the least singular value lower bound and the dimension. 

More specifically, The main contributions of the paper are given in Sections 2 to 4. Theorem \ref{Main lower bound on the risk of the least square estimator} is the main result of Section 2. It takes the form of a Cram\'er-Rao type of lower bound which shows that the expected estimation error for the least square is bounded from below as follows.
\begin{align*}
    \mathcal{E}(\hat{A}_{\text{LS}},A) \geq \frac{d^2\left(1 - \epsilon \right)^2}{\left(1 + C\Delta \right)^2}\frac{BB^*}{\sum \limits_{i=1}^{N-1} (N-i)|A^{i-1}B|_{S_2}^2},
\end{align*}
where $\Delta$ is defined in \eqref{LB inf}. Propositions \ref{bound on the spectrum for the sample covariance of a state space model} and \ref{bound on the multiplication process of a state space models} provide a high probability bound on the spectrum of the sample covariance and an $L_2$ bound for the multiplication process for the covariates generated by the dynamics \eqref{Eq: state space model}, which constitute the main probabilistic inequalities needed for the proof of Theorem \ref{Main lower bound on the risk of the least square estimator}. Section 3 is devoted to making the result of Theorem \ref{Main lower bound on the risk of the least square estimator} explicit. A task that we carry for diagonalizable matrices as an illustrative example. For this example, we need to distinguish between systems with a limit stable part and systems without a limit stable part. For systems with a limit stable part, the explicit lower bound is given in Proposition \ref{with limit stable}. This bound is rate-optimal with an unknown positive leading constant. For systems without a limit stable part the explicit lower bound is given in Proposition \ref{No limit stable}. That bound is sharp with leading constant one. These propositions provide answers to the aforementioned questions for the least squares estimator. Section 4 is devoted to the minimax mean square estimation risk over the class $\mathcal{C}_{s}$ of matrices with a least singular value larger than $s\ge 0$. Indeed, it turns out from the example studied in Section 3 for diagonalizable matrices that $\mathcal{C}_{s}$ is the right class to look at to get a uniform minimax rate. The result of that section holds for all estimators and all matrices in the class. While the main result is given in Theorem \ref{minimax}, a more explicit characterization is given in Corollary \ref{Explicit minimax} which shows the existence of three decay rates for the lower bound depending on the location of the least singular value lower bound $s$ of the matrix $A$. Roughly, we have, for $s \in (0\ 1),\,\, \mathcal{E}_2(\mathcal{C}_s) \gtrsim \frac{d^2(1-s^2)}{N}$, for $s = 1, \,\,\, \mathcal{E}_2(\mathcal{C}_1) \gtrsim \frac{\log^2(d)}{N^2}$ and $s > 1,\,\,\, \mathcal{E}_2(\mathcal{C}_s) \gtrsim \frac{d^2}{s^{2N}}$. These results are sharp with a leading constant that can be made arbitrarily close to one except when $s = 1$. They  are based on a van Trees inequality and provide a more general picture of the dependence on the dimension, the covariance, and the spectral properties since they hold for all the estimators, hence answering the aforementioned questions in their most general form. The main technical contributions are an extension of the van Trees inequality to the matrix case with explicit construction of a prior on the unit operator norm ball. The construction relies on a change of coordinate argument given in Appendix B and summarized as Proposition \ref{Change of measure theorem}. Finally, in Appendix C  we collect all technical lemmas used in different parts of this work.

\subsection*{Frequently used notation}\label{notation}
Throughout this work we use the following notation. 
\begin{itemize} 
\item We use the same standard order notation for real numbers and symmetric matrices. In the case of two symmetric matrices $M_1$ and $M_2$ the order notation $M_1 \leq M_2$ is to be understood in the L\"owner order sense in which $M_1 \leq M_2$ if and only if $M_1 - M_2$ is positive semi-definite.
\item For a matrix $W$ over the complex field $\mathbb{C}$, $W^*$ is the conjugate transpose. If the matrix $W$ is over $\mathbb{R}$, $W^*$  is to be identified with the matrix transpose. For scalar complex number $z$, $z^*$ is its complex conjugate.
\item We use the notation $W := \diag(w_1,\dots,w_d)$ for a matrix in $\mathcal{M}_{d\times d}(\mathbb{C})$ whose only non zero elements are along the diagonal and for which $W_{ii} = w_i$. We recall that $W \in \mathcal{M}_{d\times d}(\mathbb{R})$ has a singular value decomposition $W= U\Sigma V$ with both matrices $U$ and $V$ belonging to $O(d)$ the set of orthogonal matrices and the matrix of singular values $\Sigma = \diag(s_1(W),\dots,s_d(W))$ with $0 \leq s_1(W) \leq s_2(W) \leq \dots \leq s_d(W)$. We denote $s_{\max}(W) = s_d$, $s_{\min}(W) = s_1$ and the condition number $\cond(W) = \frac{s_{\max}(W)}{s_{\min}(W)}$. 
\item We recall that diagonalizable matrices in $\mathcal{M}_{d\times d}(\mathbb{R})$ are those admitting a decomposition $W= S\Gamma S^{-1}$ with $S$ invertible matrix called the change of basis matrix and $\Gamma = \diag(\lambda_1,\dots,\lambda_d)$ the eigenvalues matrix. Example of diagonalizable matrices are symmetric and positive definite matrices. For positive definite matrices, $W= S\Gamma S^{-1}$ is referred to as the spectral decomposition of $W$ and all the eigenvalues $\lambda_i$ are non negative. We denote the largest eigenvalue in magnitude of a diagonalizable matrix $W$ by $\lambda_{\max}(W)$ and the smallest one by $\lambda_{\min}(W)$ and recall that $s_{\max}(W) \geq |\lambda_{\max}(W)| \geq |\lambda_{\min}(W)| \geq s_{\min}(W)$.      
\item We denote by $(\Omega,\mathcal{F})$ the underlying measurable space. If a unique probability measure is defined on it, it will be denoted by $\mathbb{P}$ and  $\E$ will be the corresponding expectation operator. If a family of probability measures indexed by an index $\beta$ are defined on the same measurable space they will be denoted by $\mathbb{P}_\beta$ for distinction and $\E_\beta$ will be the corresponding expectation operator. 
\item All norms will be distinguished by a subscript denoting the underlying normed space, except for the standard absolute value $|\cdot|$.  The usual $p$-norms of a vector $x \in \mathbb{R}^n$ are denoted by
\begin{equation*}
    |x|_{p} = \left(\sum_{i=1}^n |x_i|^p \right)^{1/p} \quad \textrm{for }1 \leq p < \infty, \qquad |x|_{\infty} = \max \limits_{i \in \llbracket  1,n \rrbracket} |x_i|.
\end{equation*}
The $p$-Schatten norm of a matrix $A$ in the Schatten space $S_p$ is denoted by 
\begin{equation*}
    |W|_{S_p} = (\tr(W^* W)^{p/2})^{1/p} \quad \textrm{for }1 \leq p < \infty, \qquad |W|_{S_\infty} = \max \limits_{x\in \mathbb{S}_2^{n-1}} |Wx|_2,  
\end{equation*}
where $\mathbb{S}_2^{n-1} = \{x \in \mathbb{R}^n\ | \ |x|_2 \leq 1\}$
is the Euclidean unit sphere in dimension $n$.
We have  $|W|_{S_\infty} = s_{\max}(W)$. 

The spaces  $L_p(\Omega,\mathcal{F},\mathbb{P}) = \{ x\colon \Omega \to \mathbb{R} \textrm{ measurable},\,\,\, \E(|x|^p) < \infty  \}$ of random variables are endowed with the $L_p$-norms
\begin{equation*}
    |x|_{L_p} = (\E(|x|^p))^{1/p} \quad \textrm{for}\,\, 1 \leq p < \infty, \qquad |x|_{L_\infty} = \ess \sup \limits_{\omega \in \Omega} |x(\omega)|,  
\end{equation*}
where $\ess \sup$ refers to the essential supremum w.r.t.  the probability measure $\mathbb{P}$.

\item Throughout the paper $C$ will denote a positive constant whose exact value is not important for the derivation of the results and which may change from one line to another. $x \lesssim y$ is a shorthand notation for the statement 'there exists a positive constant $C$ such that $x \leq Cy$' and $x \simeq y$ means that $x \lesssim y$ and $y \lesssim x$. The minimum (maximum) of two real numbers $x$ and $y$ is denoted as $\min(x,y)= x \wedge y$ ($\max(x,y)= x \vee y$).

\end{itemize}

\section{General Cram\'er-Rao lower bound} In this section we derive a lower bound in a semi-definite sense for the expected estimation error $\mathcal{E}(\hat{A}_{\text{LS}},A)$ of the least square estimator $\hat{A}_{\text{LS}}$. This is done by providing a specific Cram\'er-Rao lower bound since $\hat{A}_{\text{LS}}$ is biased. This in turn will be made more explicit under mild assumptions on the matrix $A$ in Section 3 and will justify the choice of the class of parameters $\mathcal{C}_s$ for studying the minimax estimation risk $\mathcal{E}_2(\mathcal{C}_s)$.

First, we start by justifying taking the inverse in the definition of the least square estimator.
\begin{lemma}\label{invertiblity of the sample covariance}
    If the covariates are generated according to \eqref{Eq: state space model} then the sample covariance matrix $\sum \limits_{i=1}^N x_{i-1}x_{i-1}^*$ is invertible with probability one and the least square estimator exists almost surely.
\end{lemma}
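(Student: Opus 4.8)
The plan is to show that the sample covariance matrix $G_N := \sum_{i=1}^N x_{i-1}x_{i-1}^*$ is almost surely positive definite by exhibiting, for each unit vector $v \in \mathbb{R}^d$, that $v^* G_N v = 0$ occurs only on a null event, and then upgrading to a uniform statement. Since $v^* G_N v = \sum_{i=1}^N \langle v, x_{i-1}\rangle^2$, this quantity vanishes iff $\langle v, x_{i-1}\rangle = 0$ for all $i \in \llbracket 1, N\rrbracket$. With $x_0 = 0$ the first term is trivially zero, so the binding constraints come from $x_1, \dots, x_{N-1}$, where $x_i = \sum_{k=0}^{i-1} A^{i-1-k}B\varepsilon_k$. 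In particular $x_1 = B\varepsilon_0$, and since $B$ is full rank and $\varepsilon_0 \sim \mathcal{N}(0,I_d)$, the vector $x_1$ has a density on $\mathbb{R}^d$; hence $\mathbb{P}(\langle v, x_1\rangle = 0) = 0$ for any fixed $v \neq 0$.

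The cleanest route to the uniform statement is to work with a single scalar random variable whose non-vanishing forces invertibility. I would consider $\det(G_N)$, which is a polynomial in the entries of $x_1, \dots, x_{N-1}$, hence a polynomial in the Gaussian vector $(\varepsilon_0, \dots, \varepsilon_{N-2}) \in \mathbb{R}^{d(N-1)}$. A polynomial in finitely many jointly Gaussian (indeed, absolutely continuous) variables is either identically zero or nonzero almost everywhere, because the zero set of a non-trivial polynomial has Lebesgue measure zero and the Gaussian law is absolutely continuous with respect to Lebesgue measure. So it suffices to produce one realization of $(\varepsilon_0, \dots, \varepsilon_{N-2})$ for which $\det(G_N) \neq 0$, i.e. for which $G_N$ is invertible — equivalently, for which $x_1, \dots, x_{N-1}$ span $\mathbb{R}^d$ (here one needs $N - 1 \geq d$; if $N - 1 < d$ the matrix cannot be invertible and the statement as phrased would require $N \geq d+1$, so I would either assume this or note it — see the obstacle below).

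To exhibit such a realization: set $\varepsilon_k = 0$ for $k \geq 1$ and choose $\varepsilon_0$ freely; this gives $x_i = A^{i-1}B\varepsilon_0$ for $i \geq 1$. Then I need $B\varepsilon_0, A B \varepsilon_0, \dots, A^{N-2}B\varepsilon_0$ to span $\mathbb{R}^d$, which, taking $\varepsilon_0 = e_1$ say (or more robustly choosing $\varepsilon_0$ generically), reduces to a controllability-type condition. More robustly, I would not fix all but one $\varepsilon_k$ to zero but instead argue directly that the map $(\varepsilon_0,\dots,\varepsilon_{N-2}) \mapsto \det(G_N)$ is not identically zero: since $x_1 = B\varepsilon_0$ ranges over all of $\mathbb{R}^d$ as $\varepsilon_0$ varies ($B$ full rank), and the later $x_i$ can be adjusted independently by choosing $\varepsilon_1, \dots, \varepsilon_{N-2}$ so that $x_2, \dots, x_{N-1}$ complete a basis, there is certainly a choice making $\{x_1,\dots,x_{N-1}\}$ of full rank $d$. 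Hence $\det(G_N)$ is a non-trivial polynomial, so $\mathbb{P}(\det(G_N) = 0) = 0$, and on the complementary event $G_N$ is invertible and the least squares estimator $\hat{A}_{\text{LS}} = (\sum_i x_i x_{i-1}^*) G_N^{-1}$ is well defined.

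**The main obstacle** is the dimension count: $G_N$ is a sum of $N$ rank-one terms, one of which ($x_0 x_0^* = 0$) is null, so $\mathrm{rank}(G_N) \le N-1$, and invertibility of a $d\times d$ matrix requires $N - 1 \ge d$, i.e. $N \ge d+1$. The statement as written does not make this hypothesis explicit, so the proof should either invoke a standing assumption $N \ge d+1$ (plausible since the paper is about estimation and one cannot hope to estimate $d^2$ parameters otherwise) or, if the paper genuinely wants $N \ge d$ with a convention that $x_0$ is not the only covariate, re-index. Beyond this bookkeeping, the argument is routine: the only real content is (i) the absolute continuity of polynomial images under Gaussian laws — the zero set of a non-zero polynomial is Lebesgue-null — and (ii) the existence of one good realization, which follows from $B$ being full rank together with a generic choice of the remaining noise terms. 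No deep probabilistic input (no concentration, no martingale theory) is needed here.
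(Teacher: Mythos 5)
Your proof is correct, and it takes a genuinely different route from the paper's. The paper argues directly on linear independence: it first records that a combination $M_1\varepsilon_1+\dots+M_d\varepsilon_d$ of independent standard Gaussians with deterministic matrix coefficients vanishes almost surely only if every $M_k=0$ (by computing the covariance of the sum), then expands $\sum_i a_i x_i=0$ into such a combination and peels off $a_d=0,\ a_{d-1}=0,\dots$ recursively using the invertibility of $B$, concluding that $x_1,\dots,x_d$ are linearly independent with probability one. Your argument instead packages everything into the single scalar $\det(G_N)$, observes that it is a polynomial in the Gaussian vector $(\varepsilon_0,\dots,\varepsilon_{N-2})$, invokes the fact that the zero set of a non-trivial polynomial is Lebesgue-null, and exhibits one good realization by recursively choosing $\varepsilon_{i-1}$ so that $x_i=Ax_{i-1}+B\varepsilon_{i-1}$ completes a basis (your instinct to abandon the ``set all later $\varepsilon_k=0$'' construction was right, since that fails e.g.\ for $A=0$ or more generally when $(A,B)$ is not controllable from a single input). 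Your route buys two things: it handles the quantifier over all directions $v$ cleanly, whereas the paper's proof as written fixes deterministic scalars $a_1,\dots,a_d$ and is slightly informal about coefficients that could depend on $\omega$; and it generalizes verbatim to any absolutely continuous noise law, not just Gaussian. The paper's route is shorter and exploits the Gaussian covariance computation directly. Your flagged obstacle about the dimension count is also well taken: since $x_0=0$, the sum $\sum_{i=1}^{N}x_{i-1}x_{i-1}^*$ has rank at most $N-1$, so $N\ge d+1$ is genuinely needed; the paper tacitly assumes this (and its remark that $N_0$ ``can be taken equal to $d$'' is off by one for the same reason), so your explicit treatment of this point is an improvement rather than a gap.
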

Hence without loss of generality, we place ourselves on the event where the sample covariance matrix is invertible. While the proof of this lemma  is rather elementary, we could not find a source to refer to. The lemma is needed here since unlike in the case of the non asymptotic literature discussed in the introduction where the standard is to work on a high probability event where the sample covariance is invertible, we provide bounds in expectation and we need to care about the non-invertible case. This has also been the case in the asymptotic literature, where results were proven with probability one, the standard way of dealing with it was to add the extra assumption of the almost sure existence of $N_0$ such that $\sum \limits_{i=1}^{N_0} x_{i-1}x_{i-1}^*$ is invertible \cite[Lemma ~$1$]{10.1214/aos/1176345697}. We cannot afford to do this here either but this lemma shows that $N_0$ can be taken equal to $d$ in our case.    
\begin{proof}[Proof of Lemma \ref{invertiblity of the sample covariance}]
We start with noting that since  $(\varepsilon_i)_1^{d}$ are \emph{i.i.d.} multivariate normal $\mathcal{N}(0,I_d)$ then for any (deterministic) matrices $M_1,\dots,M_d \in \mathcal{M}_{d \times d}(\mathbb{R})$, we have  
\begin{equation}\label{independence implies freeness}
    M_1 \varepsilon_1 + M_2 \varepsilon_2 \dots+M_d \varepsilon_d = 0\,\, \text{a.s.,  if and only if}\,\, M_1 = M_2\dots = M_d = 0. 
\end{equation}
Indeed, for the if part, take the covariance operator of the sum. The only if part is trivial. 

To show the Lemma it is enough to show that $[x_1,\dots,x_d]$ are independent with probability one which guarantees the invertibility of the sample covariance and the existence of the least square estimator. For this we take $d$ scalars $a_1,\dots,a_d$ such that $\sum \limits_{i=1}^d a_ix_i= 0$ and note that
\begin{align*}
    0 = \sum \limits_{i=1}^d a_ix_i = \sum \limits_{i=1}^d a_i\sum \limits_{k=0}^{i-1} A^{i-1-k}B\epsilon_k = \sum \limits_{k=0}^{d-1} (\sum \limits_{i=1}^{d-k} a_{i+k} A^{i-1}B)\epsilon_k  
\end{align*}
Using \eqref{independence implies freeness}, the last display implies that the following set of equations hold 
\begin{equation*}
    \sum \limits_{i=1}^{d-k} a_{i+k} A^{i-1}B = 0 \quad \text{for} \quad k \in \llbracket 0, d-1 \rrbracket.
\end{equation*}
Since $B$ is invertible we can remove it. Starting from the last equation $k= d-1$ we get $a_d=0$, moving to $k=d-2$ we get $a_{d-1} = 0$, and so on to get $a_1= \dots = a_d = 0$, which means that $[x_1,\dots,x_d]$ are linearly independent with probability one.  
\end{proof}
Next, we provide a closed-form formula for the joint distribution of the covariates as defined by the model \eqref{Eq: state space model}. In this case, the covariates $x_i$ given $x_{i-1}$ are generated according to a non-degenerate join Gaussian measure $\mathcal{N}(Ax_{i-1},BB^*)$. The joint density is 
\begin{align*}
    f_{x;A}(x) &= f_{x_1,x_2,\ldots, x_N ;A,B}(x) = \prod_{i= 1}^N f_{x_i\,|\,x_{i-1};A,B}(x) \\
    &= (2\pi \det(BB^*))^{-N/2} \exp\Bigg(-\frac{1}{2}\sum \limits_{i= 1}^N |(BB^*)^{-1/2}(x_i - Ax_{i-1})|_2^2 \Bigg)\\
    &= \exp\left( \innerll{\begin{bmatrix}
    (BB^*)^{-1} A\\ -\frac{1}{2} A^* (BB^*)^{-1} A
    \end{bmatrix}}{\begin{bmatrix}
     \sum \limits_{i=1}^N x_ix_{i-1}^* \\
    \sum \limits_{i=1}^N x_{i-1}x_{i-1}^*\end{bmatrix}} -\log(Z(x))\right),
\end{align*}
which is the density of a tilted exponential family with natural parameter $\eta(A)$,  natural statistics $T(x)$ and  partition function $Z(x)$ so that
\begin{equation}\label{Eq: Tilted exponential familly}
f_{x;A}(x)=\exp\left( \innerll{\eta(A)}{T(x)} -\log(Z(x))\right),
\end{equation}
where
\begin{equation}\label{Eq: Tilted exponential familly-1}\left\{\begin{array}{lll}
\eta(A):=\begin{bmatrix}
    (BB^*)^{-1}A\\ -\frac{1}{2} A^* (BB^*)^{-1} A
    \end{bmatrix}, \quad T(x):= \begin{bmatrix}
    \Gamma(x) \\
    \Sigma(x) \end{bmatrix}= \begin{bmatrix}
    \sum \limits_{i=1}^N x_{i}x_{i-1}^* \\
    \sum \limits_{i=1}^N x_{i-1}x_{i-1}^* \end{bmatrix}, \\ Z(x) := \left( 2 \pi  \det(BB^*) \right)^{N/2} \exp\left( \frac{1}{{2\sigma_\varepsilon^2}}\sum \limits_{i=1}^{N}|(BB^*)^{-1/2}x_i|_2^2\right).
\end{array}
\right.
\end{equation}

%\begin{multline}\label{Eq: Tilted exponential familly}
 %    f_{x;A}(x)=\exp\left( \innerll{\eta(A)}{T(x)} -\log(Z(x))\right), \\ 
 %   \eta(A)=\begin{bmatrix}
 %   (BB^*)^{-1}A\\ -\frac{1}{2} A^* (BB^*)^{-1} A
 %   \end{bmatrix}, \quad T(x) = \begin{bmatrix}
  %  \Gamma(x) \\
   % \Sigma(x) \end{bmatrix} = \begin{bmatrix}
   %    \sum \limits_{i=1}^N x_{i}x_{i-1}^* \\
%    \sum \limits_{i=1}^N x_{i-1}x_{i-1}^* \end{bmatrix}, \\ \text{and} \quad Z(x) =
% \left( 2 \pi  \det(BB^*) \right)^{N/2} \exp\left( \frac{\sum %\limits_{i=1}^{N}|(BB^*)^{-1/2}x_i|_2^2}{2\sigma_\varepsilon^2}\right).
%\end{multline}

For the likelihood function \eqref{Eq: Tilted exponential familly}, we define the logarithmic sensitivity $S_x:\mathcal{M}_{d\times d}(\mathbb{R}) \to \mathcal{M}_{d\times d}(\mathbb{R})$ and the Fisher information matrix $I(A):\mathcal{M}_{d\times d}(\mathbb{R}) \to \mathcal{M}_{d\times d}(\mathbb{R})$  by 
\begin{align*}
    S_x(A) =  \nabla_A \log(f_{x;A}(x)), \quad I(A) = \E\left( S_x(A))  S_x(A)^*\right).
\end{align*} 

In the next lemma, we provide a closed-form formula for the Fisher information $I(A)$ and a simple expression for the expected correlation of the logarithmic sensitivity with the estimation error. These formulas will be needed below for deriving both the Cram\'er-Rao lower bound in the next Theorem and the van Trees inequality in section 3. 
\begin{lemma}\label{Fisher information and sensitivity}
For the tilted exponential family \eqref{Eq: Tilted exponential familly}, the Fisher information is given by
\begin{align}\label{I}
    I(A) = \left(\sum \limits_{i=1}^{N-1} (N-i)|A^{i-1}B|_{S_2}^2 \right)(BB^*)^{-1}.
\end{align}
Moreover, for the least squares estimator $\hat{A}_{\text{LS}}$, we have
\begin{align}\label{corr}
    \E\left((\hat{A}_{\text{LS}}-A) S_x(A)^*\right) = B\E\left(\left(\sum \limits_{i=1}^{N-1} \varepsilon_ix_i^*\right)\left(\sum \limits_{i=1}^{N-1} x_ix_i^*\right)^{-1}\left(\sum \limits_{i=1}^{N-1} x_i\varepsilon_i^*\right)\right)B^{-1}.
\end{align}
\end{lemma}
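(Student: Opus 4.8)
The plan is to compute both quantities directly from the exponential-family representation \eqref{Eq: Tilted exponential familly}. Since $\log f_{x;A}(x) = \innerl{\eta(A)}{T(x)} - \log Z(x)$ and $Z(x)$ does not depend on $A$, I first compute $S_x(A) = \nabla_A \log f_{x;A}(x)$ by the chain rule, differentiating $\innerl{\eta(A)}{T(x)}$ with respect to the matrix $A$. Writing $\Lambda := (BB^*)^{-1}$, the inner product expands as $\tr(\Lambda A \Gamma(x)^*) - \tfrac12 \tr(A^*\Lambda A \Sigma(x))$, and differentiating in $A$ gives $S_x(A) = \Lambda \Gamma(x) - \Lambda A \Sigma(x) = \Lambda\bigl(\Gamma(x) - A\Sigma(x)\bigr) = (BB^*)^{-1}\sum_{i=1}^N (x_i - Ax_{i-1})x_{i-1}^*$. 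Using $x_i - Ax_{i-1} = B\varepsilon_{i-1}$ and reindexing, this is $S_x(A) = (BB^*)^{-1} B \sum_{i=1}^{N-1}\varepsilon_i x_i^* = (B^*)^{-1}\sum_{i=1}^{N-1}\varepsilon_i x_i^*$, since $x_0=0$ kills the $i=0$ term and $(BB^*)^{-1}B=(B^*)^{-1}$.

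For the Fisher information \eqref{I}, I then form $I(A) = \E(S_x(A)S_x(A)^*) = (B^*)^{-1}\,\E\bigl[(\sum_i \varepsilon_i x_i^*)(\sum_j x_j \varepsilon_j^*)\bigr]\,B^{-1}$. Expanding, the expectation is $\sum_{i,j}\E(\varepsilon_i x_i^* x_j \varepsilon_j^*)$; conditioning on $x_1,\dots,x_{N-1}$ (which are functions of $\varepsilon_0,\dots,\varepsilon_{N-2}$), the terms with $i\neq j$ vanish because $\varepsilon_{\max(i,j)}$ is independent of everything else and mean zero, while for $i=j$, $\E(\varepsilon_i x_i^* x_i \varepsilon_i^* \mid x_i) = |x_i|_2^2 I_d$. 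Hence the middle factor is $\bigl(\sum_{i=1}^{N-1}\E|x_i|_2^2\bigr)I_d$. Finally $\E|x_i|_2^2 = \E|\sum_{k=0}^{i-1}A^{i-1-k}B\varepsilon_k|_2^2 = \sum_{k=0}^{i-1}|A^{i-1-k}B|_{S_2}^2 = \sum_{j=0}^{i-1}|A^jB|_{S_2}^2$, so summing over $i$ and counting multiplicities gives $\sum_{i=1}^{N-1}\E|x_i|_2^2 = \sum_{i=1}^{N-1}(N-i)|A^{i-1}B|_{S_2}^2$. Since $(B^*)^{-1}B^{-1} = (BB^*)^{-1}$, this yields \eqref{I}.

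For \eqref{corr}, I use the closed form of the least squares estimator \eqref{LS}. Writing $\Sigma := \sum_{i=1}^N x_{i-1}x_{i-1}^* = \sum_{i=1}^{N-1}x_i x_i^*$ (again $x_0=0$), the residual identity $x_i = Ax_{i-1}+B\varepsilon_{i-1}$ gives $\hat A_{\text{LS}} - A = \bigl(\sum_{i=1}^N B\varepsilon_{i-1}x_{i-1}^*\bigr)\Sigma^{-1} = B\bigl(\sum_{i=1}^{N-1}\varepsilon_i x_i^*\bigr)\Sigma^{-1}$. Combining with $S_x(A)^* = \bigl(\sum_{j=1}^{N-1}x_j\varepsilon_j^*\bigr)B^{-1}$ from the first step, the product telescopes to $(\hat A_{\text{LS}}-A)S_x(A)^* = B\bigl(\sum_i \varepsilon_i x_i^*\bigr)\Sigma^{-1}\bigl(\sum_j x_j\varepsilon_j^*\bigr)B^{-1}$, and taking expectations gives exactly \eqref{corr}.

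I expect the only subtle point to be the matrix differentiation producing the correct logarithmic sensitivity in the right orientation (row/column conventions for $\nabla_A$), and the careful bookkeeping with the shift between the index set $\llbracket 1,N\rrbracket$ for the $x_i$ and $\llbracket 0,N-1\rrbracket$ for the $\varepsilon_i$, together with the vanishing of the $x_0$ term; the conditional-expectation argument that annihilates the off-diagonal terms in the Fisher information computation is the one place where the i.i.d.\ Gaussian structure is genuinely used, and it should be stated cleanly via the filtration generated by $(\varepsilon_k)_{k\le i-1}$. Everything else is routine linear algebra.
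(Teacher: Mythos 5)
Your proposal is correct and follows essentially the same route as the paper: compute $S_x(A)=(BB^*)^{-1}(\Gamma(x)-A\Sigma(x))$ from the exponential-family form, reduce it to $(B^*)^{-1}\sum_{i=1}^{N-1}\varepsilon_i x_i^*$ via the residual identity $x_i-Ax_{i-1}=B\varepsilon_{i-1}$, kill the off-diagonal terms of $\E(S_xS_x^*)$ by conditioning on the filtration of the $\varepsilon_k$, and substitute the closed form of $\hat A_{\text{LS}}$ for \eqref{corr}. The only cosmetic difference is that the paper reaches the sensitivity formula by adding and subtracting $B\varepsilon_{i-1}$ rather than substituting the residual directly, and it phrases the vanishing of cross terms via conditioning on $\mathcal{F}_{i\vee j-1}$, which is the clean statement of your independence argument.
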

\begin{proof}
The log-likelihood function of the tilted exponential family \eqref{Eq: Tilted exponential familly} reads   
\begin{align*}
    \log f_{x;A}(x) = \innerll{\begin{bmatrix}
    (BB^*)^{-1}A\\ -\frac{1}{2} A^* (BB^*)^{-1} A
\end{bmatrix}}{\begin{bmatrix}
    \Gamma(x) \\
    \Sigma(x) \end{bmatrix} } -\log(Z(x)). 
\end{align*}
Therefore, the sensitivity is
\begin{equation*}\begin{array}{lll}
    S_x(A) = \nabla_A \innerll{\begin{bmatrix}
    (BB^*)^{-1}A\\ -\frac{1}{2} A^* (BB^*)^{-1} A
\end{bmatrix} }{ \begin{bmatrix}
    \Gamma(x) \\
    \Sigma(x) \end{bmatrix} }= (BB^*)^{-1}(\Gamma(x) - A\Sigma(x))\\ \qquad\qquad
    = (BB^*)^{-1}\left(\sum \limits_{i=1}^N x_{i}x_{i-1}^* - A(\sum \limits_{i=1}^N x_{i-1}x_{i-1}^*) \right)\\
     \qquad\qquad = (BB^*)^{-1}\left(\sum \limits_{i=1}^N (x_{i} -Ax_{i-1} - B\varepsilon_{i-1})x_{i-1}^*+\sum \limits_{i=1}^N (Ax_{i-1} + B\varepsilon_{i-1})x_{i-1}^* \right. \\ \left. \qquad\qquad\qquad \qquad\qquad\qquad- A(\sum \limits_{i=1}^N x_{i-1}x_{i-1}^*)\right).
    \end{array}
\end{equation*}
Finally,
\begin{equation}\label{S}
   S_x(A)=(B^*)^{-1}\left(\sum \limits_{i=1}^N \varepsilon_{i-1}x_{i-1}^* \right)+ (BB^*)^{-1}\left( \sum \limits_{i=1}^N (x_{i} -Ax_{i-1} - B\varepsilon_{i-1})x_{i-1}^* \right).
\end{equation}
The Fisher information is
\begin{align*}
    I(A) &= \E( S_x(A) S_x(A)^*)= (B^*)^{-1}\E\left(\left(\sum \limits_{i=1}^N \varepsilon_{i-1}x_{i-1}^*\right) \left(\sum \limits_{i=1}^N \varepsilon_{i-1}x_{i-1}^*\right)^* \right)  B^{-1}\\
    &= (B^*)^{-1}\E\left(\sum \limits_{i=1}^N \sum \limits_{j=1}^N \varepsilon_{i-1} \innerl{x_{i-1}}{x_{j-1}}\varepsilon_{j-1}^* \right)  B^{-1}.
\end{align*}
Upon conditioning w.r.t. the sigma algebra $\mathcal{F}_{n} = \sigma(\varepsilon_1, \dots ,\varepsilon_{n})$, we obtain
\begin{align*}
& \E\left(\sum \limits_{i=0}^{N-1} \sum \limits_{j=0}^{N-1} \varepsilon_i\innerl{x_i}{x_j}\varepsilon_j^* \right) = \E\left(\sum \limits_{i=0}^{N-1} \sum \limits_{j=0}^{N-1} \innerl{x_{i\vee j}}{x_{i \wedge j}} \E \left (\varepsilon_{i\vee j} \varepsilon_{i \wedge j}^*| \mathcal{F}_{i \vee j - 1} \right)    \right)\\
&= \E\left(\sum \limits_{i=0}^{N-1}  \innerl{x_i}{x_i}     \right) I_d = \sum \limits_{i=1}^{N-1} \E \innerl{\sum \limits_{k = 0}^{i-1} A^{i-1-k}B\varepsilon_k}{\sum \limits_{k = 0}^{i-1} A^{i-1-k}B\varepsilon_k} I_d\\
&= \sum \limits_{i=1}^{N-1} \sum \limits_{k = 0}^{i-1} \sum \limits_{ l = 0}^{i-1} \E \innerl{ A^{i-1-k}B\varepsilon_k}{ A^{i-1-l}B\varepsilon_l} I_d \\
&= \sum \limits_{i=1}^{N-1} \sum \limits_{k = 0}^{i-1} \sum \limits_{ l = 0}^{i-1}  \innerl{ \E(\varepsilon_k \varepsilon_l^*)}{ (A^{i-1-k}B)^* (A^{i-1-l}B)} I_d\\
&= \sum \limits_{i=1}^{N-1} \sum \limits_{k = 0}^{i-1} |A^{i-1-k}B|_{S_2}^2 I_d = \sum \limits_{i=1}^{N-1} (N-i)|A^{i-1}B|_{S_2}^2 I_d.
\end{align*}
This yields
\begin{align*}
I(A) &= (B^*)^{-1}\left(\sum \limits_{i=1}^{N-1} (N-i)|A^{i-1}B|_{S_2}^2 I_d \right) B^{-1}=\left(\sum \limits_{i=1}^{N-1} (N-i)|A^{i-1}B|_{S_2}^2 \right)(BB^*)^{-1}.
\end{align*}
To derive \eqref{corr}, we note that by \eqref{LS} and \eqref{S}, we have 
\begin{align*}
    &\E\left((\hat{A}_{ls}-A) S_x(A)^*\right) = \E\left((\hat{A}_{ls} - A)\sum \limits_{i=1}^N x_{i-1}\varepsilon_{i-1}^* \right)B^{-1}\\  &= \E\left(\left(\left(\sum \limits_{i=1}^N x_ix_{i-1}^*\right)\left(\sum \limits_{i=1}^N x_{i-1} x_{i-1}^*\right)^{-1} - A\right)\sum \limits_{i=1}^N x_{i-1} \varepsilon_{i-1}^*\right) B^{-1}\\
    &= \E\left(\left(\sum \limits_{i=1}^N (Ax_{i-1} + B\varepsilon_{i-1})x_{i-1}^*\left(\sum \limits_{i=1}^N x_{i-1} x_{i-1}^*\right)^{-1} - A \right)\sum \limits_{i=1}^N x_{i-1} \varepsilon_{i-1}^*\right) B^{-1} \\
   % &= B\E\left(\left(\sum \limits_{i=1}^N \varepsilon_{i-1} %x_{i-1}^*\right)\left(\sum \limits_{i=1}^N x_{i-1} %x_{i-1}^*\right)^{-1}\sum \limits_{i=1}^N x_{i-1} \varepsilon_{i-1}^* %\right)B^{-1}\\
    &= B\E\left(\left(\sum \limits_{i=1}^{N-1} \varepsilon_ix_i^*\right)\left(\sum \limits_{i=1}^{N-1} x_ix_i^*\right)^{-1}\sum \limits_{i=1}^{N-1} x_i\varepsilon_i^*\right)B^{-1}.
\end{align*}
\end{proof}
Introduce
\begin{equation}\label{L_A_B_Phi} \begin{array}{lll}
   \Psi := \E\left( \sum \limits_{i=1}^{N-1} x_i x_i^*\right)= \sum \limits_{k = 1}^{N-1} (N-k) A^{k-1} BB^* A^{*k-1},  \\  \quad \mathcal{L}_{A,B} := \sup_{s \in [0, 1]} | \Psi^{-1/2} (\sum_{k=0}^{N-2}  A^k  e^{j 2\pi k s} ) B|_{S_\infty}^2,
    \end{array}
\end{equation}
and set
\begin{equation}\label{Delta}
    \Delta_{A,1}(t) := (d\vee t)\mathcal{L}_{A,B} + (d\vee t)^{1/2}\mathcal{L}^{1/2}_{A,B}, \quad \Delta_{A,2} := d\mathcal{L}_{A,B}  .
\end{equation}

We now state the main theorem of this section where we provide a lower bound for the estimation error of least square estimator.
\begin{theorem} \label{Main lower bound on the risk of the least square estimator}
For all $A \in \mathcal{M}_{d\times d}(\mathbb{R})$ and $B \in \mathcal{M}_{d \times d}(\mathbb{R}) $ with $B$ non singular, the following lower bound for the estimation error holds: 
\begin{align}\label{LB}
    \mathcal{E}(\hat{A}_{\text{LS}},A) \geq \frac{d^2\left(1 - \epsilon \right)^2}{\left(1 + C\Delta \right)^2}\frac{BB^*}{\sum \limits_{i=1}^{N-1} (N-i)|A^{i-1}B|_{S_2}^2},
\end{align}
with
\begin{equation*}\label{LB inf}
    \Delta = \Delta_{A,1}\left(\log\left(\frac{ \mathcal{L}_{A,B} }{\epsilon} \right)\right).
\end{equation*}
\end{theorem}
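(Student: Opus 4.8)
The plan is to turn \eqref{LB} into a Cram\'er--Rao bound and then reduce everything to a single operator-norm estimate. For the (biased) least squares estimator, apply the matrix form of the covariance inequality to $X := \hat A_{\text{LS}} - A$ and the score $Y := S_x(A)$: since $\E[(X-WY)(X-WY)^*]\succeq 0$ for every $W\in\mathcal M_{d\times d}(\mathbb R)$, optimizing over $W$ gives
\begin{equation*}
\mathcal E(\hat A_{\text{LS}},A) = \E[XX^*] \succeq M\, I(A)^{-1} M^*, \qquad M := \E\big[(\hat A_{\text{LS}}-A)\,S_x(A)^*\big],
\end{equation*}
using $\E[YY^*] = I(A)$, which is invertible by \eqref{I}. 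Substituting the closed forms of Lemma~\ref{Fisher information and sensitivity}, namely $I(A)^{-1} = BB^*/\mathcal S$ with $\mathcal S := \sum_{i=1}^{N-1}(N-i)|A^{i-1}B|_{S_2}^2$ and $M = BQB^{-1}$ with
\begin{equation*}
Q := \E\Big[\big(\textstyle\sum_{i=1}^{N-1}\varepsilon_i x_i^*\big)\big(\sum_{i=1}^{N-1}x_i x_i^*\big)^{-1}\big(\sum_{i=1}^{N-1}x_i\varepsilon_i^*\big)\Big],
\end{equation*}
the factors $B^{-1}(BB^*)(B^{-1})^* = I_d$ cancel, and because $Q$ is symmetric positive semidefinite (each realization $G H^{-1}G^*$ with $G:=\sum_{i=1}^{N-1}\varepsilon_i x_i^*$, $H:=\sum_{i=1}^{N-1} x_i x_i^*$ is such) we obtain
\begin{equation*}
\mathcal E(\hat A_{\text{LS}},A) \succeq \frac{B\,Q^2\,B^*}{\mathcal S}.
\end{equation*}
Hence it suffices to prove the single estimate $Q \succeq \dfrac{d(1-\epsilon)}{1+C\Delta}\,I_d$, since then $Q^2\succeq \frac{d^2(1-\epsilon)^2}{(1+C\Delta)^2}I_d$, and congruence by $B$ turns this into \eqref{LB}.

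The second step is the exact identity $\E[G\,\Psi^{-1}G^*] = d\,I_d$, with $\Psi$ as in \eqref{L_A_B_Phi}. Expanding $G\Psi^{-1}G^* = \sum_{i,j}(x_i^*\Psi^{-1}x_j)\,\varepsilon_i\varepsilon_j^*$ and conditioning on $\mathcal F_{i\vee j-1} = \sigma(\varepsilon_0,\dots,\varepsilon_{i\vee j-1})$, the off-diagonal terms vanish because $\varepsilon_{i\vee j}$ is centered and independent of $\mathcal F_{i\vee j-1}$ while $x_i,x_j,\varepsilon_{i\wedge j}$ are $\mathcal F_{i\vee j-1}$-measurable; the diagonal terms contribute $\E[x_i^*\Psi^{-1}x_i]I_d = \tr(\Psi^{-1}\E[x_ix_i^*])I_d$, and summing over $i$ with $\sum_i\E[x_ix_i^*] = \Psi$ gives $\tr(I_d)I_d = d\,I_d$. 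This is the same martingale computation that already yields \eqref{I} in the proof of Lemma~\ref{Fisher information and sensitivity}.

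The third step passes from $\Psi$ to the random Gram matrix $H$. Write $GH^{-1}G^* = (G\Psi^{-1/2})(\Psi^{1/2}H^{-1}\Psi^{1/2})(G\Psi^{-1/2})^*$. On the high-probability event $\mathcal G$ supplied by Proposition~\ref{bound on the spectrum for the sample covariance of a state space model} — on which $H \preceq (1+C\Delta)\Psi$, the deviation level being tuned to the value $\log(\mathcal L_{A,B}/\epsilon)$ that enters $\Delta = \Delta_{A,1}(\log(\mathcal L_{A,B}/\epsilon))$ via \eqref{Delta} — one has $\Psi^{1/2}H^{-1}\Psi^{1/2}\succeq (1+C\Delta)^{-1}I_d$, hence $GH^{-1}G^*\,\mathbf{1}_{\mathcal G}\succeq (1+C\Delta)^{-1}G\Psi^{-1}G^*\,\mathbf{1}_{\mathcal G}$. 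Taking expectations and using the second step,
\begin{equation*}
Q \succeq \frac{1}{1+C\Delta}\,\E\big[G\Psi^{-1}G^*\,\mathbf{1}_{\mathcal G}\big] = \frac{1}{1+C\Delta}\Big(d\,I_d - \E\big[G\Psi^{-1}G^*\,\mathbf{1}_{\mathcal G^c}\big]\Big).
\end{equation*}
It then remains to show $\E[G\Psi^{-1}G^*\,\mathbf{1}_{\mathcal G^c}]\preceq d\epsilon\, I_d$. For this I would combine the deterministic contraction bound $u^*G H^{-1}G^*u = \xi^*X^*H^{-1}X\xi \le |\xi|_2^2$ (where $X$ is the design matrix with columns $x_i$, $\xi_i = \varepsilon_i^*u$, and $X^*(XX^*)^{-1}X$ is an orthogonal projection), which in particular already gives the a priori bound $Q\preceq (N-1)I_d$, with the $L_2$/deviation control of the rescaled multiplication process $\Psi^{-1/2}\sum_i\varepsilon_i x_i^*$ from Proposition~\ref{bound on the multiplication process of a state space models} and the tail bound $\mathbb P(\mathcal G^c)$ from Proposition~\ref{bound on the spectrum for the sample covariance of a state space model}, applied through Cauchy--Schwarz; the choice of deviation level $\log(\mathcal L_{A,B}/\epsilon)$ is precisely what makes these inputs balance. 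Assembling, $Q\succeq \frac{d(1-\epsilon)}{1+C\Delta}I_d$, and the first step yields \eqref{LB}.

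The main obstacle is this third step. The two probabilistic ingredients — concentration of $H$ around $\Psi$, and the deviation of the multiplication process $\sum_i\varepsilon_i x_i^*$ — are correlated with each other and with the event $\mathcal G$, so controlling $\E[G\Psi^{-1}G^*\mathbf{1}_{\mathcal G^c}]$ and tracking exactly how the parameter $\log(\mathcal L_{A,B}/\epsilon)$ propagates into the shape of $\Delta = \Delta_{A,1}(\log(\mathcal L_{A,B}/\epsilon))$, so that both the multiplicative loss $(1+C\Delta)^{-1}$ and the additive loss $d\epsilon$ come out as stated, is where essentially all the work lies; by comparison the first two steps (the matrix covariance inequality and the martingale identity $\E[G\Psi^{-1}G^*]=dI_d$) are routine.
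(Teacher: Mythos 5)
Your proposal follows essentially the same route as the paper: the covariance/Schur-complement inequality combined with Lemma \ref{Fisher information and sensitivity} to reduce everything to $Q \succeq \frac{d(1-\epsilon)}{1+C\Delta}I_d$, the exact martingale identity $\E[G\Psi^{-1}G^*]=d\,I_d$, and the split over the concentration event from Proposition \ref{bound on the spectrum for the sample covariance of a state space model} with deviation level $t=\log(\mathcal{L}_{A,B}/\epsilon)$. The only place you leave open --- the remainder $\E[G\Psi^{-1}G^*\mathbf{1}_{\mathcal G^c}]$ --- is closed in the paper not by Cauchy--Schwarz (which would require a fourth moment) but by the cruder domination $G\Psi^{-1}G^*\preceq \bigl|\Psi^{-1/2}\sum_i x_i\varepsilon_i^*\bigr|_{S_\infty}^2 I_d$ followed by the bound $\mathbb P(\mathcal A^c)\,\E\bigl|\Psi^{-1/2}\sum_i x_i\varepsilon_i^*\bigr|_{S_\infty}^2 I_d \preceq e^{-t}Cd\Delta_{A,2} I_d$, so that only the second-moment estimate of Proposition \ref{bound on the multiplication process of a state space models} is invoked.
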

The derivation of this lower bound is inspired by that of the classical Cram\'er-Rao lower bound using a Schur complement. Such a derivation is outlined in \citet{Ibragimov81} Chapter I, section 7, where it is done in a somewhat general setting for a class of parameters from a general Hilbert space of vectors. Besides the use of a Schur complement, the proof is different since we are working with a specific likelihood function defined on the parametric Hilbert space of matrices with a biased least square estimator, which puts the set up outside  the classical case where one could get a general bound on the larger class of unbiased estimator. 

The derivation of the bound \eqref{LB} is based on the estimates displayed in the next two propositions where we provide a high probability deviation bound for a rescaled sample covariance of the LTI state space system evolving according to \eqref{Eq: state space model} and an upper bound on the $L_2$ norm of the multiplication process of the same covariates. These results are of independent value on their own and are obtained through a generic chaining argument. Their proofs are differed to Appendix A.
\begin{proposition}\label{bound on the spectrum for the sample covariance of a state space model}
Let $x_1, \dots, x_N$ be the time shifted covariates of linear state space model \ref{Eq: state space model}. Then, for every $t \ge 1$, we obtain, with probability $1-e^{-t}$,
\begin{equation*}
|\Psi^{-1/2} \left(\sum \limits_{i=1}^{N-1} x_i x_i^*\right) \Psi^{-1/2} - I_d|_{S_\infty} \leq C\Delta_{A,1}(t).   
\end{equation*}
\end{proposition}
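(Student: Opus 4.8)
The plan is to establish a uniform (in the frequency parameter $s$) concentration inequality for the rescaled, centered sample covariance via a generic chaining argument applied to a quadratic form. First I would write $\sum_{i=1}^{N-1} x_i x_i^* = \sum_{i=1}^{N-1}(\sum_{k=0}^{i-1} A^{i-1-k}B\varepsilon_k)(\sum_{l=0}^{i-1} A^{i-1-l}B\varepsilon_l)^*$ and reorganize the double sum into a single quadratic form in the stacked Gaussian vector $\varepsilon = (\varepsilon_0^*,\dots,\varepsilon_{N-2}^*)^*$. Concretely, for a fixed test vector $u$ on the Euclidean sphere, $u^*\Psi^{-1/2}(\sum_i x_i x_i^*)\Psi^{-1/2}u = |M_u \varepsilon|_2^2$ for a matrix $M_u$ whose block structure is $(M_u)_{i,k} = \mathbf{1}_{k\le i-1}\, (A^{i-1-k}B)^* \Psi^{-1/2} u$; the point of the definition of $\Psi$ in \eqref{L_A_B_Phi} is exactly that $\E|M_u\varepsilon|_2^2 = u^* \Psi^{-1/2}\Psi\Psi^{-1/2} u = 1$, so the quantity we must control is $\sup_{|u|_2\le 1} \big| |M_u\varepsilon|_2^2 - \E|M_u\varepsilon|_2^2\big|$, i.e. the deviation of a Gaussian chaos indexed by the operator ball.

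The key step is then a Hanson–Wright / Gaussian-concentration bound combined with a chaining/net argument over the index set. For a single $u$, Hanson–Wright (or the Gaussian case thereof) gives, with probability $1-e^{-t}$,
\begin{equation*}
\big|\,|M_u\varepsilon|_2^2 - \E|M_u\varepsilon|_2^2\,\big| \;\lesssim\; |M_u^* M_u|_{S_2}\sqrt{t} + |M_u^* M_u|_{S_\infty}\, t .
\end{equation*}
The operator-norm term $|M_u^* M_u|_{S_\infty}$ is controlled by $\mathcal{L}_{A,B}$: since $\mathcal{L}_{A,B} = \sup_{s}|\Psi^{-1/2}(\sum_{k=0}^{N-2} A^k e^{j2\pi ks})B|_{S_\infty}^2$ is essentially a bound on the $S_\infty$ norm of the block-Toeplitz-type operator built from $\{\Psi^{-1/2}A^kB\}$, one gets $|M_u^*M_u|_{S_\infty}\le \mathcal{L}_{A,B}$ uniformly in $u$ (this uses the classical bound relating the operator norm of a Toeplitz / convolution operator to the sup over the unit circle of its symbol, which is presumably one of the technical lemmas in Appendix C). The trace-norm term satisfies $|M_u^*M_u|_{S_2}\le \sqrt{d}\,|M_u^*M_u|_{S_\infty}^{1/2}\cdot|M_u^*M_u|_{S_\infty}^{1/2}$ — more precisely $|M_u^*M_u|_{S_2}^2 \le \tr(M_u^*M_u)\cdot|M_u^*M_u|_{S_\infty}$, and $\tr(M_u^*M_u)=1$ gives $|M_u^*M_u|_{S_2}\le \mathcal{L}_{A,B}^{1/2}$; summing the single-$u$ deviations over the sphere with a union bound/chaining replaces $t$ by $d\vee t$ (the $d$ accounts for the metric entropy of the operator ball in dimension $d$), producing exactly $\Delta_{A,1}(t) = (d\vee t)\mathcal{L}_{A,B} + (d\vee t)^{1/2}\mathcal{L}_{A,B}^{1/2}$.

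The main obstacle I anticipate is making the chaining step genuinely uniform over the operator ball while keeping the dimensional price at $d$ (rather than $d^2$, which a crude $\varepsilon$-net of the space of $d\times d$ matrices would give): one has to exploit that the process $u\mapsto |M_u\varepsilon|_2^2$ is a quadratic form, so its increments between $u$ and $u'$ are controlled by $|M_u - M_{u'}|$ times $(|M_u|+|M_{u'}|)$, and one should chain at the level of the linear map $u\mapsto M_u\varepsilon$ (a Gaussian process) and then square, rather than chaining the quadratic process directly; alternatively one applies a matrix-Bernstein / matrix-deviation inequality for $\Psi^{-1/2}(\sum_i x_ix_i^*)\Psi^{-1/2}$ directly in $S_\infty$. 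A secondary technical point is verifying the inequality $|M_u^*M_u|_{S_\infty}\le \mathcal{L}_{A,B}$ cleanly — this requires recognizing $M_u$ as (a column of) a block-convolution operator and invoking the symbol bound, which is where the $e^{j2\pi ks}$ in the definition of $\mathcal{L}_{A,B}$ enters. Once these two points are handled, collecting terms and choosing constants yields the stated bound $C\Delta_{A,1}(t)$ with probability $1-e^{-t}$ for $t\ge 1$.
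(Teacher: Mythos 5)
Your proposal is correct and follows essentially the same route as the paper: both rewrite $u^*\Psi^{-1/2}(\sum_i x_ix_i^*)\Psi^{-1/2}u$ as a Gaussian chaos $|\mathcal{B}_u\varepsilon|_2^2$ indexed by the unit sphere, bound $|\mathcal{B}_u|_{S_\infty}$ by the Toeplitz symbol bound giving $\mathcal{L}_{A,B}^{1/2}$, note $|\mathcal{B}_u|_{S_2}=|u|_2$, and then invoke a uniform Hanson--Wright-type chaining bound. The "main obstacle" you identify (chaining the linear process $u\mapsto\mathcal{B}_u\varepsilon$ rather than the quadratic one) is exactly what the paper's cited tool, Dirksen's Theorem 3.5, handles via the $\gamma_2(\mathbb{B},|\cdot|_{S_\infty})$ functional, which is then bounded by $d^{1/2}\mathcal{L}_{A,B}^{1/2}$ through Talagrand's majorizing measure theorem.
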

Previous optimal results on the concentration of dependent covariates were obtained in \cite{10.3150/20-BEJ1262} where the authors showed that time shifted covariates concentrate around their mean at a decaying rate as illustrated in ~\cite[Theorem ~$3.4$]{10.3150/20-BEJ1262} which says that with probability at least $1-e^{-t}$ we have
\begin{equation*} 
        \left| \sum \limits_{i=1}^{N-1} x_ix_i^* - \sigma_x^2I_d \right|_{S_\infty} \lesssim  \frac{d\log(d)}{N} + \sqrt{\frac{d\log(d)}{N} }  + \frac{d}{N}t + \sqrt{\frac{d}{N} } t^{1/2}.
\end{equation*}
Up to a $\log(d)$ factor these results are similar to those obtained for the independent case in ~\cite[Theorem ~$1$]{ADAMCZAK2011195}. As we shall see in Section 3 while this will be the case for stable and unstable matrices, it will not be the case if the matrix $A$ has a limit stable part.  
\begin{proposition}\label{bound on the multiplication process of a state space models}
If the covariates $(x_i)_{i=1}^N$ are generated according to a non-degenerate join Gaussian distribution with the density \eqref{Eq: Tilted exponential familly}, then we have
\begin{align*}
    \E\left|\Psi^{-1/2}\sum \limits_{i=1}^{N-1} x_i\varepsilon_i^*\right|_{S_\infty}^2  &\leq Cd\Delta_{A,2}.
\end{align*}
\end{proposition}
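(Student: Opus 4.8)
The plan is to reduce the bound on $\E|\Psi^{-1/2}\sum_{i=1}^{N-1} x_i\varepsilon_i^*|_{S_\infty}^2$ to a generic chaining / Dudley-type integral estimate for a Gaussian (or Gaussian-chaos) process indexed by pairs of unit vectors, and then feed in the covariance structure encoded by $\Psi$ and $\mathcal{L}_{A,B}$. First I would write $M := \Psi^{-1/2}\sum_{i=1}^{N-1} x_i\varepsilon_i^*$ and use the variational characterization $|M|_{S_\infty} = \sup_{|u|_2\le 1,\,|v|_2\le 1} u^* M v = \sup_{u,v} \sum_{i=1}^{N-1}\langle \Psi^{-1/2}u, x_i\rangle \langle v,\varepsilon_i\rangle$. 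Conditionally on the covariates $(x_i)$, this is a Gaussian process in $(u,v)$ with a natural metric; unconditionally it is a second-order (decoupled-type) chaos in the underlying $\varepsilon$'s, since $x_i = \sum_{k=0}^{i-1}A^{i-1-k}B\varepsilon_k$. The key observation is that the "frequency-domain" quantity $\mathcal{L}_{A,B} = \sup_{s\in[0,1]}|\Psi^{-1/2}(\sum_{k=0}^{N-2}A^k e^{j2\pi ks})B|_{S_\infty}^2$ is exactly the object that controls the increments of this process: by a discrete Fourier / companion-form argument, the block-Toeplitz operator sending $(\varepsilon_k)$ to $(\Psi^{-1/2}x_i)$ has operator norm governed by the supremum over the circle of its symbol, which is $\mathcal{L}_{A,B}^{1/2}$.

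The main steps, in order, would be: (i) condition on $(x_i)_{i=1}^{N-1}$ and apply a Gaussian concentration/chaining bound (e.g. Dudley's entropy integral, or the deviation bound for Gaussian quadratic-type forms) to get $\E_\varepsilon|M|_{S_\infty}^2 \lesssim \big(\E_\varepsilon|M|_{S_\infty}\big)^2 + \text{variance term}$, both controlled by the "effective dimension" $d$ and the operator norm $\|\Psi^{-1/2}\sum x_i x_i^*\Psi^{-1/2}\|$; (ii) invoke Proposition \ref{bound on the spectrum for the sample covariance of a state space model} to replace that random operator norm by $1 + C\Delta_{A,1}(t)$ on an event of probability $1-e^{-t}$, and handle the complementary event via a crude deterministic bound on $|M|_{S_\infty}$ together with Gaussian tail integration so that the tail contributes only lower-order terms; (iii) carry out the integration over $t$, observing that $\int_1^\infty \Delta_{A,1}(t)\,e^{-t}\,dt \lesssim d\,\mathcal{L}_{A,B} + d^{1/2}\mathcal{L}_{A,B}^{1/2} \lesssim d\mathcal{L}_{A,B}$ up to constants (using $d\ge 1$ and that $\mathcal{L}_{A,B}$ can be assumed $\gtrsim 1$, or otherwise absorbing), which produces the claimed $Cd\Delta_{A,2} = Cd^2\mathcal{L}_{A,B}$; (iv) alternatively, and perhaps more cleanly, bound $\E|M|_{S_\infty}^2$ directly by noting $|M|_{S_\infty}^2 \le \tr(M^*M) = |M|_{S_2}^2 = \tr\big(\Psi^{-1/2}(\sum x_i\varepsilon_i^*)(\sum \varepsilon_i x_i^*)\Psi^{-1/2}\big)$ only when $d$ is small, and otherwise use the sharper spectral bound — I would reconcile these to land on the stated $d\Delta_{A,2}$ scaling.

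I expect the main obstacle to be controlling the process on the bad event where the sample covariance $\Psi^{-1/2}(\sum x_i x_i^*)\Psi^{-1/2}$ is far from $I_d$: the multiplication process and the sample covariance are built from the same Gaussian source, so they are not independent, and a naive union bound over the chaining net combined with the crude deterministic bound could be lossy. The remedy is to run the chaining argument for $M$ \emph{and} for the sample covariance simultaneously (they share the same index set of unit vectors and the same increment metric, governed by $\mathcal{L}_{A,B}$), so that a single generic-chaining estimate à la Talagrand controls both, with the conditional Gaussian structure of $(\varepsilon_i)_i$ given $(x_j)_{j<i}$ (or a decoupling inequality) handling the dependence; this is also why the proofs of both propositions are deferred to the same Appendix A and proved by the same generic chaining machinery. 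A secondary technical point is making the passage from the time-domain sum to the frequency-domain symbol $\mathcal{L}_{A,B}$ rigorous for a general (possibly non-normal) matrix $A$, which requires the companion/circulant embedding rather than a direct diagonalization.
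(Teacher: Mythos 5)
Your high-level reading of the problem is right: the quantity is a supremum over $(u,v)\in\mathbb{S}_2^{d-1}\times\mathbb{S}_2^{d-1}$ of a homogeneous second-order Gaussian chaos $\varepsilon^* W_{u,v}\varepsilon$ in the full innovation vector $\varepsilon=(\varepsilon_0,\dots,\varepsilon_{N-1})$, and $\mathcal{L}_{A,B}$ enters as the supremum over the circle of the symbol of the block-Toeplitz matrix $W_{u,v}$, which controls $|W_{u,v}|_{S_\infty}$. But your primary route (i)--(iii) has a genuine flaw. Conditioning on the covariates $(x_i)$ leaves no Gaussian randomness to chain over: since $B$ is invertible, $\varepsilon_i=B^{-1}(x_{i+1}-Ax_i)$ is a deterministic function of the covariates, so ``$\E_\varepsilon$ given $(x_i)$'' is degenerate and step (i) does not produce a Gaussian process with the covariance you describe. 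A martingale-type conditioning (on $\mathcal{F}_{i-1}$ only) does not rescue the argument in the form you state it, and the subsequent reliance on Proposition \ref{bound on the spectrum for the sample covariance of a state space model} plus integration over $t$ is not needed and does not land on the right quantity: your own bookkeeping in (iii) yields $d\mathcal{L}_{A,B}+d^{1/2}\mathcal{L}_{A,B}^{1/2}$, which you can only upgrade to the target $d\Delta_{A,2}=d^2\mathcal{L}_{A,B}$ by assuming $\mathcal{L}_{A,B}\gtrsim 1$ --- false precisely in the regime (no limit-stable part, large $N$) where the proposition is used. Your fallback (iv) via $|M|_{S_\infty}\le|M|_{S_2}$ gives $\E|M|_{S_2}^2=d^2$, again weaker than $d^2\mathcal{L}_{A,B}$ when $\mathcal{L}_{A,B}<1$.

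The paper's proof never invokes Proposition \ref{bound on the spectrum for the sample covariance of a state space model} and never conditions. It applies Hanson--Wright to the increments $\varepsilon^*(W_{u_1,v_1}-W_{u_2,v_2})\varepsilon$, obtaining a mixed subgaussian/subexponential tail in the two metrics $d_2((u_1,v_1),(u_2,v_2))=|W_{u_1,v_1}-W_{u_2,v_2}|_{S_2}=|u_1v_1^*-u_2v_2^*|_{S_2}$ and $d_\infty=|W_{u_1,v_1}-W_{u_2,v_2}|_{S_\infty}\le \mathcal{L}_{A,B}^{1/2}(|u_1-u_2|_2+|v_1-v_2|_2)$, and then uses the Talagrand--Dirksen second-moment bound for the supremum of such a chaos: $(\E\sup(\varepsilon^*W_{u,v}\varepsilon)^2)^{1/2}\lesssim \gamma_2(\mathbb{S}_W,|\cdot|_{S_2})+\gamma_1(\mathbb{S}_W,|\cdot|_{S_\infty})+\sup_{u,v}(\E(\varepsilon^*W_{u,v}\varepsilon)^2)^{1/2}$. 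The three terms are computed as $\gamma_2\lesssim \E|G|_{S_\infty}\le 2d^{1/2}$ (majorizing measures against the Gaussian matrix process $\langle G,uv^*\rangle$), $\gamma_1\lesssim \mathcal{L}_{A,B}^{1/2}\,d$ (subadditivity of $\gamma_1$ plus Dudley's integral with the sphere covering bound $(3/u)^d$), and $\sup=1$; this is where the $d\,\mathcal{L}_{A,B}^{1/2}$ and hence $d\Delta_{A,2}=d^2\mathcal{L}_{A,B}$ come from. Your closing remark about ``running the chaining for $M$ and the sample covariance simultaneously'' gestures in this direction, but the decisive ingredients --- Hanson--Wright for the increments and the $\gamma_1$ term in the $S_\infty$ metric --- are missing from your proposal, so as written it does not constitute a proof.
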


\begin{proof}[Proof of Theorem \ref{Main lower bound on the risk of the least square estimator}]

For any estimator $\hat{A}$ of $A$, the following matrix 
\begin{align*}
    &\begin{bmatrix}
    \hat{A}-A\\
    \nabla_A \log(f_{x;A}(x))
    \end{bmatrix}
    \begin{bmatrix}
    (\hat{A}-A)^* &
    \nabla_A \log(f_{x;A}(x))^*
    \end{bmatrix} \\
    &= \begin{bmatrix}
    (\hat{A}-A)(\hat{A}-A)^* & (\hat{A}-A)  S_x(A)^*\\
     S_x(A) (\hat{A}-A)^* &
     S_x(A)  S_x(A)^*
    \end{bmatrix}
\end{align*}
is positive semi-definite by construction. Taking the expectation and using the Schur complement formula we obtain the inequality
\begin{align*}
    &\mathcal{E}(\hat{A}_{\text{LS}},A) = \E\left((\hat{A}-A)(\hat{A}-A)^* \right) \\
    &\geq \E\left((\hat{A}-A)  S_x(A)^*\right)I(A)^{-1}
    \E\left( S_x(A) (\hat{A}-A)^*)\right).
\end{align*}
Instead of continuing with the Cram\'er-Rao derivation, we compute explicitly the terms of the last expression since we have a specific Likelihood function and estimator. The term $S_x(A)$ depends only on the likelihood function. Thus both the sensitivity $S_x(A)$ and the Fisher information $I(A)$ can be computed independently of the choice of the estimator. These terms are computed in Lemma \ref{Fisher information and sensitivity} which yields
\begin{equation*}\label{Eq: scaled self-normalized over Fisher information}
    \mathcal{E}(\hat{A}_{\text{LS}},A) \geq \frac{B\left(\E\left(\left(\sum \limits_{i=1}^{N-1} \varepsilon_ix_i^*\right)\left(\sum \limits_{i=1}^{N-1} x_ix_i^*\right)^{-1}\left(\sum \limits_{i=1}^{N-1} x_i\varepsilon_i^*\right)\right)\right)^2B^*}{\sum \limits_{i=1}^{N-1} (N-i)|A^{i-1}B|_{S_2}^2 }.
\end{equation*}
By Proposition \ref{bound on the spectrum for the sample covariance of a state space model}  we have, with probability $1-e^{-t}$, the following estimate:
\begin{equation*}
|\Psi^{-1/2} \left(\sum \limits_{i=1}^{N-1} x_ix_i^*\right) \Psi^{-1/2} - I_d|_{S_\infty} \lesssim \Delta_{A,1}.   
\end{equation*}
Thus, with probability at least $1-e^{-t}$, we have, for some positive constant $C$,
\begin{equation*}
     \sum \limits_{i=1}^{N-1} x_ix_i^* \leq ( 1 + C\Delta_{A,1})\Psi.
\end{equation*}
We then define the event $\mathcal{A}$ of probability at least $1-e^{-t}$
\begin{equation*}
    \mathcal{A} = \left\{ \frac{1}{1+ C\Delta_{A,1}} \Psi^{-1}  \leq \left(\sum \limits_{i=1}^{N-1} x_ix_i^*\right)^{-1}\right\}.
\end{equation*}
We note that
\begin{equation*}\begin{array}{lll}
    \E\left(\left(\sum \limits_{i=1}^{N-1} \varepsilon_ix_i^*\right)\left(\sum \limits_{i=1}^{N-1} x_ix_i^*\right)^{-1}\left(\sum \limits_{i=1}^{N-1} x_i\varepsilon_i^*\right)\right)\\
    \geq \E\left(\mathds{1}_{\mathcal{A}}\left(\sum \limits_{i=1}^{N-1} \varepsilon_ix_i^*\right)\left(\sum \limits_{i=1}^{N-1} x_ix_i^*\right)^{-1}\sum \limits_{i=1}^{N-1} x_i\varepsilon_i^*\right)\\
    \geq \frac{1}{1+ C\Delta_{A,1}} \E\left(\mathds{1}_{\mathcal{A}}\left(\sum \limits_{i=1}^{N-1} \varepsilon_ix_i^*\right)\Psi^{-1}\left(\sum \limits_{i=1}^{N-1} x_i\varepsilon_i^*\right)\right)\\
    = \frac{1}{1+ C\Delta_{A,1}} \left(\E\left(\left(\sum \limits_{i=1}^{N-1} \varepsilon_ix_i^*\right)\Psi^{-1}\left(\sum \limits_{i=1}^{N-1} x_i\varepsilon_i^*\right)\right) \right. \\   \left. \qquad\qquad\qquad - \E\left(\mathds{1}_{\mathcal{A}^c}\left(\sum \limits_{i=1}^{N-1} \varepsilon_ix_i^*\right)\Psi^{-1}\left(\sum \limits_{i=1}^{N-1} x_i\varepsilon_i^*\right)\right) \right)\\
    \geq \frac{1}{1+ C\Delta_{A,1}} \left(\E\left(\left(\sum \limits_{i=1}^{N-1} \varepsilon_ix_i^*\right)\Psi^{-1}\left(\sum \limits_{i=1}^{N-1} x_i\varepsilon_i^*\right)\right) \right. \\ \left. \qquad\qquad\qquad\qquad\qquad  - \mathbb{P}(\mathcal{A}^c)\E\left( \left|\Psi^{-1/2}(\sum \limits_{i=1}^{N-1} x_i\varepsilon_i^*)\right|_{S_\infty}^2 \right)I_d \right)\\
    \geq \frac{1}{1+ C\Delta_{A,1}} \left(\E\left(\left(\sum \limits_{i=1}^{N-1} \varepsilon_ix_i^*\right)\Psi^{-1}\left(\sum \limits_{i=1}^{N-1} x_i\varepsilon_i^*\right)\right) \right. \\ \left. \qquad\qquad\qquad\qquad\qquad - e^{-t}\E\left( \left|\Psi^{-1/2}(\sum \limits_{i=1}^{N-1} x_i\varepsilon_i^*)\right|_{S_\infty}^2 \right)I_d \right).
    \end{array}
\end{equation*}
We compute the first term as follows:
\begin{align*}
&\E\left(\left(\sum \limits_{i=1}^{N-1} \varepsilon_ix_i^*\right)\Psi^{-1}\left(\sum \limits_{i=1}^{N-1} x_i\varepsilon_i^*\right)\right) \\
&= \E\left(\left(\sum \limits_{i=1}^{N-1} \varepsilon_i\left(\sum \limits_{k = 0}^{i-1} A^{i-1-k}B\varepsilon_k\right)^*\right)\Psi^{-1}\left(\sum \limits_{i=1}^{N-1} \left(\sum \limits_{k = 0}^{i-1} A^{i-1-k}B\varepsilon_k \right)\varepsilon_i^*\right)\right) \\
&= \E\left(\sum \limits_{i=1}^{N-1} \varepsilon_i\E\left(\sum \limits_{k = 0}^{i-1} \varepsilon_k^* B^* A^{i-1-k*}  \Psi^{-1} A^{i-1-k}B\varepsilon_k\,\big|\, \varepsilon_i \right)\varepsilon_i^*\right)\\
&= \E\left(\sum \limits_{i=1}^{N-1} \varepsilon_i\sum \limits_{k = 0}^{i-1}  \innerl{\E ( \varepsilon_k \varepsilon_k^*| \varepsilon_i )}{B^* A^{i-1-k*}  \Psi^{-1} A^{i-1-k}B }   \varepsilon_i^*\right)\\
&= \sum \limits_{i=1}^{N-1}  \innerl{ \sum \limits_{k = 0}^{i-1} A^{i-1-k}B  B^* A^{i-1-k*} }{\Psi^{-1}} \E\left( \varepsilon_i   \varepsilon_i^* \right)\\
&=   \innerl{\sum \limits_{i=1}^{N-1} \sum \limits_{k = 0}^{i-1} A^{i-1-k}B  B^* A^{i-1-k*} } {\Psi^{-1}} I_d = \innerl{\Psi}{\Psi^{-1}} I_d = d I_d.
\end{align*}
Furthermore, by Proposition \ref{bound on the multiplication process of a state space models}, we have
\begin{align*}
    \E \left|\Psi^{-1/2}\sum \limits_{i=1}^{N-1} x_i\varepsilon_i^*\right|_{S_\infty}^2  &\leq Cd\Delta_{A,2}.
\end{align*}
altogether we obtain the desired result
\begin{equation*}
    \E\left(\left(\sum \limits_{i=1}^{N-1} \varepsilon_ix_i^*\right)\left(\sum \limits_{i=1}^{N-1} x_ix_i^*\right)^{-1}\left(\sum \limits_{i=1}^{N-1} x_i\varepsilon_i^*\right)\right) \geq \frac{d\left(1 - Ce^{-t}\Delta_{A,2} \right)}{1 + C\Delta_{A,1} }I_d
\end{equation*}
Substituting this result in \eqref{Eq: scaled self-normalized over Fisher information} we obtain 
\begin{align*}
    &\mathcal{E}(\hat{A}_{\text{LS}},A) \geq \frac{d^2\left(1 - Ce^{-t}\Delta_{A,2} \right)^2}{(1 + C\Delta_{A,1} )^2}\frac{BB^*}{\sum \limits_{i=1}^{N-1} (N-i)|A^{i-1}B|_{S_2}^2}.
\end{align*}
To ensure $Ce^{-t}\Delta_{A,2} \leq \epsilon$,  we choose $t = \log\left(\frac{C \Delta_{A,2}}{\epsilon} \right)$ for C large enough. This gives
\begin{align*}
    &\mathcal{E}(\hat{A}_{\text{LS}},A) \geq \frac{d^2\left(1 - \epsilon \right)^2}{\left(1 + C\Delta_{A,1}\left(\log\left(\frac{ \Delta_{A,2}}{\epsilon} \right)\right) \right)^2}\frac{BB^*}{\sum \limits_{i=1}^{N-1} (N-i)|A^{i-1}B|_{S_2}^2}.
\end{align*}
Plugging in the value of $\Delta_{A,2}$ gives the desired result.
\end{proof}

%\newpage

\section{Specific bounds for diagonalizable matrices}

Theorem \ref{Main lower bound on the risk of the least square estimator} provides the most general results when it comes to a lower bound of the mean square estimation risk incurred by the least square estimator for the estimation of a specific matrix $A$. Yet, the expression \eqref{LB} does not explicitly show the dependence of the decay rate on the spectral properties of the estimated matrix. This will be the task of the present section. This task will be done under the simplifying assumption that the matrix $A$ is diagonalizable. The non explicit parts of \eqref{LB} are the 
\begin{equation*}
    \sum \limits_{i=1}^{N-1} (N-i)|A^{i-1}B|_{S_2} \quad \text{and} \quad \mathcal{L}_{A,B} = \sup_{s \in [0,\ 1]} | \Psi^{-1/2} (\sum_{k=0}^N  A^k  e^{j 2\pi k s} ) B|_{S_\infty}^2.
\end{equation*}
To deal with the first term we introduce the function $\Phi: [0\ \infty) \to [0\ \infty)$ that will capture the three estimation rates claimed in the above literature review:
\begin{equation*}
        \Phi(a) = \begin{cases}
        \frac{N}{1-a} + \frac{1}{(1-a)^2}\quad &\text{if} \quad a \in [0, 1),\\
        \frac{N(N-1)}{2} \quad &\text{if} \quad a = 1,\\
        \frac{a^N}{(a-1)^2}\quad &\text{if} \quad a > 1.
        \end{cases}
\end{equation*}
By Lemma \ref{Estimate for double geometric series} in Appendix C, we have $\Phi(a) \geq \sum \limits_{i=0}^{N-2} (N-1-i)a^i$ on $[0\ \infty)$. Recall also that the mean square risk of the least square estimator is expressed as
\begin{equation*}
    \mathcal{E}_2(\hat{A}_{\text{LS}},A) = \E\left(|\hat{A}_{\text{LS}}-A|_{S_2}^2\right) = \tr\left(\mathcal{E}(\hat{A}_{\text{LS}},A) \right).
\end{equation*}
In view of Theorem \ref{Main lower bound on the risk of the least square estimator}, we have
\begin{align}
    \mathcal{E}_2(\hat{A}_{\text{LS}},A) &\geq \frac{d^2\left(1 - \epsilon \right)^2}{\left(1 + C\Delta \right)^2}\frac{|B|_{S_2}^2}{\sum \limits_{i=1}^{N-1} (N-i)|A^{i-1}B|_{S_2}^2} \nonumber\\
    &\geq \frac{\left(1 - \epsilon \right)^2}{\left(1 + C\Delta \right)^2}\frac{d^2}{\sum \limits_{i=1}^{N-1} (N-i)|A|_{S_\infty}^{2(i-1)}} \geq \frac{\left(1 - \epsilon \right)^2}{\left(1 + C\Delta \right)^2} \frac{d^2}{\Phi(|A|_{S_\infty}^2)}.\label{simple LB}
\end{align}
Recall that, by the Jordan matrix decomposition, a matrix $A$ can be written as 
\begin{equation}\label{Decomposition of a matrix}
    A = S\begin{bmatrix}
    A_u& & \\
    &A_s & \\
    & &A_{\text{ls}}
    \end{bmatrix}S^{-1}:= S\tilde{A}S^{-1},
\end{equation}
via a change of basis matrix $S$, $A_s \in \mathcal{M}_{s\times s}(\mathbb{C})$ is the stable part with all $|\lambda_i(A_s)| < 1$, $A_u \in \mathcal{M}_{u\times u}(\mathbb{C})$ is the unstable part with all $|\lambda_i(A_u)| > 1$, and $A_{\text{ls}} \in \mathcal{M}_{\text{ls} \times \text{ls}}(\mathbb{C})$ with all $|\lambda_i(A_{\text{ls}})| = 1$. Also define $B= S\Tilde{B}$ and note that
\begin{align*}
    \Delta = \left(d\vee \log\left(\frac{ \mathcal{L}_{A,B}}{\epsilon} \right) \right)\mathcal{L}_{A,B} + \left(d\vee \log\left(\frac{ \mathcal{L}_{A,B} }{\epsilon} \right) \right)^{1/2}\mathcal{L}^{1/2}_{A,B}
\end{align*}
is an increasing function of $\mathcal{L}_{A,B}$. Thus, to get a sharp bound in Theorem \ref{Main lower bound on the risk of the least square estimator} we need a good upper estimate for $\mathcal{L}_{A,B}$. For this purpose, we impose the following simplifying assumption.
\begin{assumption}\label{assumption}
    We assume that $A$ is diagonalizable.
\end{assumption}
The results of the next two propositions can be obtained under the more general yet less natural Assumption \ref{second assumption} stated  below. The proofs are written so that the reader can immediately see how to extend it to the general case.  
\begin{assumption}\label{second assumption}
    We assume that the matrix $S$ in the decomposition \eqref{Decomposition of a matrix} can be chosen such that $|A_u^{-1}|_{S_\infty} \leq 1$, $|A_s|_{S_\infty} \leq 1$, and $A_{\text{ls}}$ is diagonal with $A_{\text{ls}}A_{\text{ls}}^* = I_{\text{ls}}$.
\end{assumption}
\noindent It is easy to check that Assumption 1  implies Assumption 2.
\medskip
\subsection*{\underline{\bf Case 1: The matrix $A$ has a limit stable part}}
If the matrix $A$ in the linear state space model \eqref{Eq: state space model} has a limit stable part, the following proposition provides a lower bound on the estimation error of the least square estimator.
\begin{proposition}\label{with limit stable}
Suppose the matrix $A$ of a linear state space evolution given by \eqref{Eq: state space model} has eigenvalues in the limit stable part and satisfies Assumption \ref{assumption}. Set, for any $\epsilon \in (0, 1)$,
\begin{equation*}
    \Delta_\epsilon := \left(d\vee \log\left(\frac{ \cond^2(\tilde{B}) }{\epsilon (1-|A_u^{-1}|_{S_\infty}\vee |A_s|_{S_\infty})} \right) \right)\frac{\cond^2(\tilde{B})}{1-|A_u^{-1}|_{S_\infty}\vee |A_s|_{S_\infty}}.
\end{equation*}
Then, for values of $N$ such that
\begin{equation*}
    N \geq \frac{2}{1-s_{\min}^2(A_u^{-1})\vee s_{\min}^2(A_s)}, 
\end{equation*}
we obtain the following lower bound on the mean square risk for the least square estimator,
\begin{align*}
    \mathcal{E}_2(\hat{A}_{\text{LS}},A) \gtrsim (1 - \epsilon )^2 \frac{(d/\Delta_\epsilon)^2}{\Phi(|A|_{S_\infty}^2)}.
\end{align*}
\end{proposition}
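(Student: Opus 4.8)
The plan is to specialize the simplified lower bound \eqref{simple LB},
\begin{equation*}
\mathcal{E}_2(\hat{A}_{\text{LS}},A)\ \geq\ \frac{(1-\epsilon)^2}{(1+C\Delta)^2}\,\frac{d^2}{\Phi(|A|_{S_\infty}^2)},\qquad \Delta=\Delta_{A,1}\!\Big(\log\tfrac{\mathcal{L}_{A,B}}{\epsilon}\Big),
\end{equation*}
whose only non-explicit ingredient is $\Delta$, a quantity that is monotone increasing in $\mathcal{L}_{A,B}$. Hence the whole proposition reduces to the single estimate $\mathcal{L}_{A,B}\lesssim \cond^2(\tilde B)/(1-r)=:\bar{\mathcal L}$ with $r:=|A_u^{-1}|_{S_\infty}\vee|A_s|_{S_\infty}$. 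Once this is in hand, monotonicity of $\Delta_{A,1}$ together with $\log(\mathcal{L}_{A,B}/\epsilon)\lesssim d\vee\log(\cond^2(\tilde B)/(\epsilon(1-r)))$ gives $\Delta\lesssim\Delta_\epsilon$, hence $1+C\Delta\lesssim\Delta_\epsilon$ since $\Delta_\epsilon\geq d\geq1$; substituting into \eqref{simple LB} yields $\mathcal{E}_2(\hat A_{\text{LS}},A)\gtrsim(1-\epsilon)^2(d/\Delta_\epsilon)^2/\Phi(|A|_{S_\infty}^2)$, which is the claim. The hypothesis that $A$ has a limit stable part is used only to place us in this case (and to record $|A|_{S_\infty}\geq1$, so that $\Phi(|A|_{S_\infty}^2)$ is in the quadratic or exponential regime); the bound on $\mathcal{L}_{A,B}$ is obtained for all three Jordan blocks uniformly.

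To prove $\mathcal{L}_{A,B}\lesssim\bar{\mathcal L}$ I would pass to the coordinates $A=S\tilde A S^{-1}$, $B=S\tilde B$ of \eqref{Decomposition of a matrix}. Writing $F_s:=\sum_{k=0}^{N-2}\tilde A^k e^{j2\pi ks}$, $G:=\sum_{m=0}^{N-2}(N-1-m)\tilde A^m\tilde A^{*m}$ and $\tilde\Psi:=\sum_{m=0}^{N-2}(N-1-m)\tilde A^m\tilde B\tilde B^*\tilde A^{*m}$, one has $\sum_k A^k e^{j2\pi ks}B=SF_s\tilde B$ and $\Psi=S\tilde\Psi S^*$. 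Using $|\Psi^{-1/2}M|_{S_\infty}^2=\lambda_{\max}(MM^*\Psi^{-1})$ with $M=SF_s\tilde B$ and $\Psi^{-1}=S^{-*}\tilde\Psi^{-1}S^{-1}$, the outer factors of $S$ cancel up to a similarity (which leaves $\lambda_{\max}$ unchanged), giving $\mathcal{L}_{A,B}=\sup_{s\in[0,1]}\lambda_{\max}\!\big(\tilde\Psi^{-1/2}F_s\tilde B\tilde B^*F_s^*\tilde\Psi^{-1/2}\big)$. Now bound $\tilde B\tilde B^*\preceq|\tilde B|_{S_\infty}^2 I$ in the numerator and $\tilde B\tilde B^*\succeq s_{\min}^2(\tilde B)I$ inside $\tilde\Psi$; this reduces the task to the purely spectral inequality
\begin{equation*}
F_sF_s^*\ \preceq\ \frac{C}{1-r}\,G\qquad\text{for all }s\in[0,1],
\end{equation*}
the ratio $|\tilde B|_{S_\infty}^2/s_{\min}^2(\tilde B)=\cond^2(\tilde B)$ being exactly the prefactor produced by this reduction.

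Under Assumption \ref{assumption}, $\tilde A=\diag(A_u,A_s,A_{\text{ls}})$ is diagonal, so both $F_sF_s^*$ and $G$ are diagonal and the inequality splits into one scalar estimate per eigenvalue $\lambda$ of $A$. For a stable $\lambda$ ($|\lambda|<1$) one pairs $\big|\sum_{k=0}^{N-2}\lambda^k e^{j2\pi ks}\big|\leq(1-|\lambda|)^{-1}$ with the double-geometric sum $\sum_{m=0}^{N-2}(N-1-m)|\lambda|^{2m}$; for an unstable $\lambda$ ($|\lambda|>1$) one factors out $|\lambda|^{2(N-2)}$ and pairs $\big|\sum_{k=0}^{N-2}\lambda^k e^{j2\pi ks}\big|\leq|\lambda|^{N-2}(1-|\lambda|^{-1})^{-1}$ with $|\lambda|^{2(N-2)}\sum_{j=0}^{N-2}(j+1)|\lambda|^{-2j}$; for a limit stable $\lambda$ ($|\lambda|=1$) the crude $\big|\sum_{k=0}^{N-2}\lambda^k e^{j2\pi ks}\big|\leq N-1$ against $\sum_{m=0}^{N-2}(N-1-m)=\tfrac{N(N-1)}{2}$ already gives ratio $\leq2$. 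This is where the lower bound $N\geq 2/(1-s_{\min}^2(A_u^{-1})\vee s_{\min}^2(A_s))$ enters: it guarantees that the finite double-geometric partial sums in the denominators are, for every eigenvalue of $A_s$ and of $A_u$, within a constant factor of their limiting value, so that the stable and unstable ratios are $\lesssim(1-|A_s|_{S_\infty})^{-1}\vee(1-|A_u^{-1}|_{S_\infty})^{-1}=(1-r)^{-1}$. Taking the maximum over the three blocks and over $s$ yields $F_sF_s^*\preceq\frac{C}{1-r}G$, hence $\mathcal{L}_{A,B}\lesssim\bar{\mathcal L}$.

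The main obstacle is precisely this last step: extracting the sharp power $(1-r)^{-1}$, rather than the naive $(1-r)^{-2}$ that results from keeping only the single largest term of $\sum_m(N-1-m)|\lambda|^{2m}$, forces one to exploit the saturation of these sums guaranteed by the threshold on $N$. The limit stable block is the delicate one, since there $F_s$ genuinely reaches size $N-1$ — the supremum over $s$ tunes $e^{j2\pi s}$ to the phase of an eigenvalue of $A_{\text{ls}}$ — and it is only the growth of order $N^2$ of the matching block of $G$ that keeps the quotient bounded. Combining the three regimes and re-expressing the outcome through the single quantity $\Delta_\epsilon$, absorbing constants into $\lesssim$, then completes the argument.
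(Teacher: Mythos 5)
Your proposal is correct and follows essentially the same route as the paper: specialize the simplified bound \eqref{simple LB}, observe that everything reduces to the upper estimate $\mathcal{L}_{A,B}\lesssim \cond^2(\tilde B)/\bigl(1-|A_u^{-1}|_{S_\infty}\vee|A_s|_{S_\infty}\bigr)$ (which the paper isolates as Lemma \ref{Upper and lower bounds for LA}), and then absorb constants into $\Delta_\epsilon$. Your derivation of that estimate via the block-diagonal L\"owner comparison $F_sF_s^*\preceq C(1-r)^{-1}G$ is only cosmetically different from the paper's (which rescales by $D=\diag(A_u^{-N},I_s,\tfrac{1}{N}I_{\mathrm{ls}})S^{-1}$ and bounds the numerator and $\lambda_{\min}(\Psi_D)$ separately); the per-eigenvalue estimates in the three spectral regimes and the use of the threshold on $N$ to saturate the double geometric sums are identical.
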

We note that in the presence of a limit stable part we loose the dimension factor $d^2$ since $d/\Delta_\epsilon \leq 1$. This will not be the case when there is no limit stable part, since $\Delta_\epsilon$ can be made arbitrarily small. On the other hand, for the case with a limit stable part we do not have concentration of the sample covariance around $\Psi$ (cf. Proposition \ref{bound on the spectrum for the sample covariance of a state space model}) but only the following deviation result 
\begin{equation*}
     \sum \limits_{i=1}^{N-1} x_ix_i^* \leq C\left(d\vee \log\left(\frac{ \mathcal{L}_{A,B} }{\epsilon} \right) \right)\mathcal{L}_{A,B}\Psi
\end{equation*}
with probability $1-e^{-t}$, since $\mathcal{L}_{A,B}$ cannot be made arbitrarily small for large sample size $N$.

\begin{proof}[Proof of Proposition \ref{with limit stable}]
From Theorem \ref{Main lower bound on the risk of the least square estimator} we have obtained the simplified lower bound of equation \eqref{simple LB}
\begin{align*}
    \mathcal{E}_2(\hat{A}_{\text{LS}},A) \geq \frac{\left(1 - \epsilon \right)^2}{\left(1 + C\Delta \right)^2} \frac{d^2}{\Phi(|A|_{S_\infty}^2)},
\end{align*}
with
\begin{equation*}
    \Delta = \left(d\vee \log\left(\frac{ \mathcal{L}_{A,B} }{\epsilon} \right) \right)\mathcal{L}_{A,B} + \left(d\vee \log\left(\frac{ \mathcal{L}_{A,B} }{\epsilon} \right)\right)^{1/2}\mathcal{L}^{1/2}_{A,B}.
\end{equation*}
To obtain the result it is enough to lower bound the term
\begin{equation*}
    \frac{d^2\left(1 - \epsilon \right)^2}{(1 + C\Delta)^2}.
\end{equation*}
this requires the following upper bound on $\mathcal{L}_{A,B}$ which is a direct consequence of Lemma \ref{Upper and lower bounds for LA} in Appendix C. For all $N$ such that
\begin{equation*}
    N \geq \frac{2}{1-s_{\min}^2(A_u^{-1})\vee s_{\min}^2(A_s)}, 
\end{equation*}
we have 
\begin{equation*}\label{upper-L-A}
    \mathcal{L}_{A,B} \leq 
     \frac{9\cond^2(\tilde{B})}{1-|A_u^{-1}|_{S_\infty}\vee |A_s|_{S_\infty}}.
\end{equation*}
Since the right hand side is greater than $1$ we can upper bound $ \Delta $ by $\Delta_\epsilon$ defined as
\begin{equation*}
    \Delta_\epsilon = \left(d\vee \log\left(\frac{ \cond^2(\tilde{B}) }{\epsilon (1-|A_u^{-1}|_{S_\infty}\vee |A_s|_{S_\infty})} \right) \right)\frac{\cond^2(\tilde{B})}{1-|A_u^{-1}|_{S_\infty}\vee |A_s|_{S_\infty}} \gtrsim \Delta.
\end{equation*}
Hence, for
\begin{equation*}
    N \geq \frac{2}{1-s_{\min}^2(A_u^{-1})\vee s_{\min}^2(A_s)}, 
\end{equation*}
we obtain the following lower bound on the risk, valid for all $\epsilon \in (0, 1)$,
\begin{align*}
    \mathcal{E}_2(\hat{A}_{\text{LS}},A) \gtrsim (1 - \epsilon )^2 \frac{(d/\Delta_\epsilon)^2}{\Phi(|A|_{S_\infty}^2)}.
\end{align*}
\end{proof}

\subsection*{\underline{\bf Case 2: The matrix $A$ has no  limit stable part}} 
Now we turn in the next proposition to lower bounds on the estimation error of the least square estimator when $A$ has no limit stable part.
\begin{proposition}\label{No limit stable}
Suppose the matrix $A$ has no eigenvalues in the limit stable part and satisfies Assumption \ref{assumption}. For $\epsilon \in (0\ 1)$ and for values of $N$ such that
\begin{equation*}
    N \geq \frac{(d\vee \log(1/\epsilon))\cond(\tilde{B})^2}{\epsilon^2 (1-|A_u^{-1}|_{S_\infty}\vee |A_s|_{S_\infty})},
\end{equation*}
the expected mean square estimation error associated to the least square estimator $\hat{A}_{\text{LS}}$ satisfies the following lower bound:
\begin{align*}
    \mathcal{E}_2(\hat{A}_{\text{LS}},A) \geq \frac{(1 - \epsilon )^2}{(1 + \epsilon )^2} \frac{d^2}{\Phi(|A|_{S_\infty}^2)}.
\end{align*}
\end{proposition}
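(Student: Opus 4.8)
The plan is to return to the simplified lower bound \eqref{simple LB} that Theorem \ref{Main lower bound on the risk of the least square estimator} produces,
\begin{equation*}
    \mathcal{E}_2(\hat{A}_{\text{LS}},A)\ \geq\ \frac{(1-\epsilon)^2}{(1+C\Delta)^2}\,\frac{d^2}{\Phi(|A|_{S_\infty}^2)},
    \qquad \Delta=\Delta_{A,1}\!\Big(\log\tfrac{\mathcal{L}_{A,B}}{\epsilon}\Big),
\end{equation*}
and to show that the stated lower bound on $N$ forces $C\Delta\leq\epsilon$; then $(1+C\Delta)^2\leq(1+\epsilon)^2$ and the conclusion is immediate. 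Since $\Delta_{A,1}(t)=(d\vee t)\mathcal{L}_{A,B}+(d\vee t)^{1/2}\mathcal{L}_{A,B}^{1/2}$ is nondecreasing in $t$ and in $\mathcal{L}_{A,B}$, the whole matter reduces to a sufficiently small upper bound for $\mathcal{L}_{A,B}$.

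The decisive input -- and the place where ``$A$ has no eigenvalue on the unit circle'' is used -- is to replace the crude $O(1)$ estimate of $\mathcal{L}_{A,B}$ employed in Proposition \ref{with limit stable} by the $N$-decaying estimate supplied by Lemma \ref{Upper and lower bounds for LA} in Appendix C, which under Assumption \ref{assumption} (or the weaker Assumption \ref{second assumption}) and the Jordan decomposition \eqref{Decomposition of a matrix} has the shape
\begin{equation*}
    \mathcal{L}_{A,B}\ \leq\ \frac{C\,\cond^2(\tilde B)}{N\big(1-|A_u^{-1}|_{S_\infty}\vee|A_s|_{S_\infty}\big)} .
\end{equation*}
Heuristically, this is precisely the regime in which the rescaled sample covariance $\Psi^{-1/2}(\sum_i x_i x_i^*)\Psi^{-1/2}$ genuinely concentrates at a rate vanishing with $N$ (Proposition \ref{bound on the spectrum for the sample covariance of a state space model}): on the stable block $\Psi$ grows linearly in $N$ while the symbol $\sum_k A^k e^{j2\pi ks}B$ stays bounded, and the geometric contraction measured by $|A_u^{-1}|_{S_\infty}\vee|A_s|_{S_\infty}<1$ is exactly what a unit-modulus eigenvalue would destroy. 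Feeding the hypothesis on $N$ into this bound gives $\mathcal{L}_{A,B}\leq C\epsilon^2/(d\vee\log(1/\epsilon))\leq C\epsilon^2$.

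Once $\mathcal{L}_{A,B}$ is of order $\epsilon^2$, the quantity $\log(\mathcal{L}_{A,B}/\epsilon)$ is negative up to a universal additive constant, so $d\vee\log(\mathcal{L}_{A,B}/\epsilon)$ is comparable to $d$ and
\begin{equation*}
    \Delta\ \lesssim\ d\,\mathcal{L}_{A,B}+\big(d\,\mathcal{L}_{A,B}\big)^{1/2}\ \leq\ C\epsilon^2+C\epsilon\ \leq\ C\epsilon ,
\end{equation*}
using $d\,\mathcal{L}_{A,B}\leq C\epsilon^2$. Enlarging the universal constant implicit in the lower bound on $N$ turns this into $C\Delta\leq\epsilon$, which inserted into \eqref{simple LB} yields exactly $\mathcal{E}_2(\hat{A}_{\text{LS}},A)\geq\frac{(1-\epsilon)^2}{(1+\epsilon)^2}\frac{d^2}{\Phi(|A|_{S_\infty}^2)}$.

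I expect the bulk of the effort to lie in the middle step: establishing the $1/N$ decay of $\mathcal{L}_{A,B}$ with the correct dependence on $\cond(\tilde B)$ and on the spectral gap $1-|A_u^{-1}|_{S_\infty}\vee|A_s|_{S_\infty}$, which rests on the block-diagonalization \eqref{Decomposition of a matrix} together with a careful estimate of $\Psi$ and of the symbol separately on the stable and unstable parts; a secondary subtlety is bookkeeping the constants tightly enough to land on the clean factor $(1-\epsilon)^2/(1+\epsilon)^2$ rather than $1-O(\epsilon)$, which is why the lower bound on $N$ carries the $\epsilon^{-2}$ and the $(d\vee\log(1/\epsilon))$ factors.
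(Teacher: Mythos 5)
Your proposal is correct and follows essentially the same route as the paper: start from the simplified bound \eqref{simple LB}, invoke Lemma \ref{Upper and lower bounds for LA} to get the $1/N$-decaying estimate $\mathcal{L}_{A,B}\lesssim \cond^2(\tilde B)/\big(N(1-|A_u^{-1}|_{S_\infty}\vee|A_s|_{S_\infty})\big)$ available only when there is no limit stable part, and then check that the hypothesis on $N$ forces $d\,\mathcal{L}_{A,B}\lesssim\epsilon^2$ and $\log(\mathcal{L}_{A,B}/\epsilon)\leq 0$, whence $\Delta\leq\epsilon$ and the stated bound. The paper's own proof does exactly this, down to the observation that the logarithmic term is nonpositive so that $d\vee\log(\mathcal{L}_{A,B}/\epsilon)=d$.
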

This result is sharp and the leading  multiplicative constant can be made arbitrarily close to one. Here we retrieve the dimension factor $d^2$ as opposed to the case with a limit stable part. This factor is usually expected when estimating $d^2$ parameter. The fact that it disappears in the case of presence of a limit stable part is due to the $\mathcal{L}_{A,B}$ factor which comes from the $\gamma_1$ functional in both the proof of Theorem \ref{bound on the spectrum for the sample covariance of a state space model} and \ref{bound on the multiplication process of a state space models}. One would think that this is an artifact of the proof but the same happens in the next section while deriving lower bounds on the minimax risk using completely different machinery. 
\begin{proof}
In the same way as for the case with a limit stable part, recall that by Theorem \ref{Main lower bound on the risk of the least square estimator} we have obtained the simplified lower bound of equation \eqref{simple LB}
\begin{align*}
    \mathcal{E}_2(\hat{A}_{\text{LS}},A) \geq \frac{\left(1 - \epsilon \right)^2}{\left(1 + C\Delta \right)^2} \frac{d^2}{\Phi(|A|_{S_\infty}^2)},
\end{align*}
with the definition of
\begin{equation*}
    \Delta = \left(d\vee \log\left(\frac{ \mathcal{L}_{A,B} }{\epsilon} \right) \right)\mathcal{L}_{A,B} + \left(d\vee \log\left(\frac{ \mathcal{L}_{A,B} }{\epsilon} \right)\right)^{1/2}\mathcal{L}^{1/2}_{A,B}.
\end{equation*}
Since the matrix $A$ has no limit stable part, as direct consequence of Lemma \ref{Upper and lower bounds for LA} in Appendix C, we have the bound
\begin{equation}\label{L-A-upper-2}
    \mathcal{L}_{A,B} \leq 
     \frac{4\cond(\tilde{B})^2(1-s_{\min}^2(A_s) \wedge s_{\min}^2(A_u^{-1}))}{N(1-|A_u^{-1}|_{S_\infty}\vee |A_s|_{S_\infty})}.
\end{equation}
for
\begin{equation*}
    N \geq \frac{2}{1-s_{\min}^2(A_u^{-1})\vee s_{\min}^2(A_s)},
\end{equation*}
If we take  
\begin{equation*}
    N \geq 16\frac{1}{1-s_{\min}^2(A_u^{-1})\vee s_{\min}^2(A_s)} \vee \frac{d\vee \log(1/\epsilon)}{\epsilon^2}\frac{\cond(\tilde{B})^2(1-s_{\min}^2(A_s) \wedge s_{\min}^2(A_u^{-1}))}{1-|A_u^{-1}|_{S_\infty}\vee |A_s|_{S_\infty}},
\end{equation*}
we get in particular $4 d \mathcal{L}_{A,B} \leq \epsilon^2$ and $\log \left(\frac{\mathcal{L}_{A,B}}{\epsilon}\right)\leq \log(\epsilon) \leq 0$ which means
\begin{equation*}
    \Delta \leq 2\left( d \mathcal{L}_{A,B}\right)^{1/2} \leq \epsilon.
\end{equation*}
This indeed ensures that the following lower bound on the risk holds for all $\epsilon \in (0, 1)$,
\begin{align*}
    \mathcal{E}_2(\hat{A}_{\text{LS}},A) \gtrsim \frac{(1 - \epsilon )^2}{(1 + \epsilon )^2} \frac{d^2}{\Phi(|A|_{S_\infty}^2)}.
\end{align*}
The condition on the values of $N$ is simplified since 
\begin{equation*}
    \frac{1}{1-s_{\min}^2(A_u^{-1})\vee s_{\min}^2(A_s)} \leq \frac{1}{1-|A_u^{-1}|_{S_\infty}\vee |A_s|_{S_\infty}}.
\end{equation*}
\end{proof}
\begin{remark}
The rates provided in Propositions \ref{with limit stable} and \ref{No limit stable} seem to suggest that there is a sharp phase transition from a decay rate of $N^{-1}$ in the stable case, to $N^{-2}$ in the limit stable case, to $|A|_{S_\infty}^{-2N}$ in the unstable case expressed in terms of the function $\Phi(|A|_{S_\infty}^2)$. According the these propositions the values of validity of these rates for $N$ get arbitrarily large if $|A_u^{-1}|_{S_\infty}\vee |A_s|_{S_\infty}$ is arbitrary close to $1$. This is just an artifact of the proof that has nothing to do with the lower bound provided by Theorem \ref{LB} and the transition from one rate to another is continuous in the sample size $N$. Of course, these issues disappear in asymptotic results after taking the limit such as in \cite{10.1214/aos/1176350711}. A similar remark is made for the minimax rate in Corollary \ref{Explicit minimax}. We propose a way to deal with this issue in Remark \ref{Important remak}, below. 
\end{remark}
The results presented in this section provide answers to the questions mentioned in the introduction. Indeed \eqref{simple LB} as a direct consequence of Theorem \ref{LB}, shows that we can recover similar results in expectation for the mean square error of the least square estimator in the setup of \eqref{Eq: state space model}. The observed decay rate depends on the spectrum of the matrix $A$ and is dictated by its operator norm. Proposition \ref{No limit stable} shows that in the absences of a limit stable part the decay rate is independent of the covariance and scales with $d^2$, while in the presence of a limit stable part Proposition \ref{with limit stable} shows that the decay rate still involves the noise covariance through the condition number of $B$ while the $d^2$ factor is lost. \\
These results are extended in the next section to the mean square minimax setup which holds for all estimators and larger classes of estimates. There the least singular value dictates the rate instead which is always independent of the covariance structure and the $d^2$ factor still disappears for limits stable matrices.

\section{Mini-max lower-bounds and adaptivity: The van Trees inequality}
In the previous sections, we provided lower bounds on the mean square estimation risk and the expected estimation error that are specific to the least square estimator $\hat{A}_{\text{LS}}$ when applied to a specific matrix $A$. We now turn to the more general task of describing the minimax mean square estimation risk for all estimators and over a large class of matrices. Upon inspection of \eqref{simple LB}, we note that the most difficult matrices to estimate in any class would be those with the smallest $|A|_{S_\infty}$. This remark motivates the choice of the following class $\mathcal{C}_s$ of matrices parametrized by $s \geq 0$ and defined by
\begin{equation}\label{C-s}
    \mathcal{C}_s = \left\{ A \in \mathcal{M}_{d\times d}(\mathbb{R}),\,\, \ s_{\min}(A) \geq s\right\}.
\end{equation}
So this section is devoted to providing lower bounds for the minimax mean square estimation risk 
\begin{equation*}
    \mathcal{E}_2(\mathcal{C}_s) := \inf \limits_{\hat{A}} \sup \limits_{A \in \mathcal{C}_s} \mathcal{E}_2(\hat{A},A), 
\end{equation*}
where the infimum is taken over all $(x_1,\ldots,x_N)$-measurable functions.

\medskip
The main result of this section is 
\begin{theorem}\label{minimax}
for all $s \geq 0$ and $\varepsilon > 0$ we have
\begin{align*}
       \mathcal{E}_2(\mathcal{C}_s) \geq \frac{d^2}{\sum \limits_{i=0}^{N-2} (N-1-i)(s+\epsilon)^{2i}  + \frac{2(d+2)^2}{\epsilon^2}} .
\end{align*}
\end{theorem}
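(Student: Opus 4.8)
The plan is to establish a matrix-valued van Trees (Bayesian Cramér–Rao) inequality and then select a prior supported on $\mathcal{C}_s$ that makes the Bayesian Fisher information small. First I would recall from the exponential family structure \eqref{Eq: Tilted exponential familly} that, for a fixed $A$, the Fisher information of the model is $I(A)=\big(\sum_{i=1}^{N-1}(N-i)|A^{i-1}B|_{S_2}^2\big)(BB^*)^{-1}$ by Lemma \ref{Fisher information and sensitivity}. The idea is to place a prior $\pi$ on a small ball of matrices of the form $A = sO + \epsilon M$, where $O$ is a fixed orthogonal (or identity) matrix ensuring $s_{\min}(A)\geq s$, and $M$ ranges over the operator-norm unit ball with a smooth, compactly supported density; this is exactly the differential-geometric prior construction alluded to in the introduction (Proposition \ref{Change of measure theorem} in Appendix B). For such a prior, the "prior Fisher information" term $J(\pi) = \E_\pi\big(\nabla_A \log \pi(A)\,(\nabla_A\log\pi(A))^*\big)$ scales like $1/\epsilon^2$ up to a dimensional constant, and the van Trees inequality gives, for the scalar functional obtained by tracing,
\begin{equation*}
    \mathcal{E}_2(\mathcal{C}_s) \;\geq\; \sup_{\hat A}\inf - \text{no} \;=\; \frac{d^2}{\E_\pi\,\tr\big(I(A)\big) + J(\pi)} ,
\end{equation*}
where $d^2$ in the numerator is the squared trace of the identity coming from differentiating the $d^2$ coordinates of $A$ against themselves.

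Concretely, the steps in order: (1) State and prove the matrix van Trees inequality: for any estimator $\hat A$ and any smooth prior density $\lambda$ on $\mathcal{M}_{d\times d}(\mathbb{R})$ vanishing on the boundary of its support, $\E_\lambda \mathcal{E}_2(\hat A, A) \geq (\tr C)^2 / \big(\E_\lambda \tr I(A) + \mathcal{I}(\lambda)\big)$ where $C = \E_\lambda \nabla_A A$ (here $C = d\, I_d$ effectively after the correct bookkeeping, giving numerator $d^2$) and $\mathcal{I}(\lambda)$ is the Fisher information of the prior; this follows from the scalar van Trees applied coordinatewise combined with a Cauchy–Schwarz / Schur-complement step, mirroring the proof of Theorem \ref{Main lower bound on the risk of the least square estimator}. (2) Choose the prior: fix $B$ irrelevant by noting scale-invariance, take $A = sO + \epsilon U$ with $U$ drawn from a fixed smooth density supported on $\{|U|_{S_\infty}\leq 1\}$; via the change-of-coordinates Proposition \ref{Change of measure theorem} one controls $\mathcal{I}(\lambda)\leq \frac{2(d+2)^2}{\epsilon^2}$, the $(d+2)^2$ being the Fisher information constant of the canonical smooth prior on the operator ball in $d^2$ real dimensions. (3) Bound the averaged model Fisher information: on the support every $A$ satisfies $|A|_{S_\infty}\leq s+\epsilon$, hence $\tr I(A) = \sum_{i=1}^{N-1}(N-i)|A^{i-1}B|_{S_2}^2\,\tr((BB^*)^{-1}) \leq \sum_{i=0}^{N-2}(N-1-i)(s+\epsilon)^{2i}$ after the same cancellation of $B$ that appears throughout (using $|A^{i-1}B|_{S_2}^2 \leq |A|_{S_\infty}^{2(i-1)}|B|_{S_2}^2$ and $|B|_{S_2}^2\,\tr((BB^*)^{-1})\geq$ the right normalization — more carefully one works with $\Psi^{-1/2}$-rescaled quantities so $B$ cancels exactly as in \eqref{simple LB}). (4) Assemble: plug the two bounds into the van Trees inequality and take the supremum of the resulting right-hand side over admissible $\epsilon$, or simply leave $\epsilon$ free since the statement is "for all $\epsilon>0$".

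The main obstacle I expect is step (1)–(2): correctly formulating the van Trees inequality for a $d\times d$ matrix parameter with a matrix-valued "derivative of the mean" term and getting the numerator to come out as exactly $d^2$ rather than $d$ or $d^4$, and then constructing a prior on the operator-norm ball — a non-product, curved region — whose Fisher information is genuinely $O(d^2/\epsilon^2)$ with the clean constant $2(d+2)^2$. This is where the differential-geometric change-of-coordinates argument (Appendix B, Proposition \ref{Change of measure theorem}) does the real work: one needs a diffeomorphism from a product domain (where a tensor-product smooth prior has easily computed Fisher information $\sum_j (\text{1-D constant})/\epsilon^2$) onto a neighborhood of $sO$ inside $\mathcal{C}_s$, with Jacobian bounds that transfer the Fisher information estimate without blowing up the constant. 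The cancellation of $B$ in step (3), while routine given the rescaling trick used in \eqref{simple LB}, must be done carefully because here $A$ is random under the prior, so one should rescale by the deterministic $\Psi$ associated to a reference point or bound uniformly over the support; since $|A|_{S_\infty}\leq s+\epsilon$ uniformly on the support this is harmless.
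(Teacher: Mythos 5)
Your proposal follows essentially the same route as the paper: a Schur-complement (matrix) van Trees inequality, a prior supported on the operator-norm ball of radius $\epsilon$ around $sI_d$ whose Fisher information is computed through the SVD change of coordinates of Proposition \ref{Change of measure theorem} and comes out as $\frac{2(d+1)(d+2)}{\epsilon^2}I_d \leq \frac{2(d+2)^2}{\epsilon^2}I_d$, the uniform bound $|A|_{S_\infty}\leq s+\epsilon$ on the support to control the averaged model Fisher information, and the numerator $dI_d$ obtained by integration by parts against the boundary-vanishing density. The one bookkeeping caveat is your fully scalarized display $\mathcal{E}_2 \geq d^2/(\E_\pi \tr I(A) + J(\pi))$: by \eqref{I}, $\tr I(A)$ carries the factor $|B|_{S_2}^2\,\tr((BB^*)^{-1}) = \tr(BB^*)\tr((BB^*)^{-1}) \geq d^2$, so this denominator does not reduce to $\sum_{i}(N-1-i)(s+\epsilon)^{2i}$ for general $B$; the correct fix is not a $\Psi^{-1/2}$ rescaling but, as in the paper, to keep the matrix inequality $\tr(dI_d\, I_F^{-1}\, dI_d)$ to the very end and use $BB^* \leq \tr(BB^*) I_d$ inside $I_F$ before taking the trace.
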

This theorem is actually a van Trees inequality for the mean square risk of matrices. One might think that it is possible to use existing multivariate van Trees inequalities such as in \cite{Gill,Stoltz13} to obtain the result. But a close inspection of the proof reveals that a key step in the derivation is to construct a prior that is at the same time easy to differentiate with respect to $A$ as a matrix and easy to integrate with respect to its singular values and singular vectors. This is not an easy task if we use existing multivariate van Trees inequalities, since they are defined in terms of vector-valued gradients which are evaluated at vectors and use integrals with respect to the Lebesgue measure over the components of those vectors.    
\begin{proof}
Given $s \geq 0$ and $\varepsilon > 0$, consider the sets
\begin{equation*}
    B_\infty\left(s, \epsilon \right) = \left\{A \in \mathcal{M}_{d\times d}(\mathbb{R}):\,\,\, \left|A-\left( s + \frac{\epsilon}{2} \right)I_d\right|_{S_\infty} \leq \frac{\epsilon}{2} \right\}.
\end{equation*}
These are sets of invertible matrices with the following property: 
\begin{equation*}
    s \leq s_1(A) \leq s_2(A) \leq \dots \leq s_d(A) \leq s + \epsilon. 
\end{equation*}
Thus, for all $s \geq 0$, they are included in the class $\mathcal{C}_s$. 

For any estimator $\hat{A}$ and any prior with density $\Pi_{s,\epsilon}$ on the space of matrices $\mathcal{C}_s$, we define
\begin{equation*}
I_{\Pi_{s,\epsilon}}:= \E_{\Pi_{s,\epsilon}} \left(\nabla_A \log({\Pi_{s,\epsilon}}(A))) \nabla_A \log({\Pi_{s,\epsilon}}(A))^*\right) \end{equation*} and
\begin{equation*}
    \quad  I(A) := \E_A\left(\nabla_A \log(f_{x;A}(x)) \nabla_A \log(f_{x;A}(x))^*\right).
\end{equation*}
To obtain a lower bound on $\mathcal{E}_2(\mathcal{C}_s)$ we note that the following matrix is positive semi-definite by construction:
{\small \begin{equation*}\begin{array}{lll}
    \E_{\Pi_{s,\epsilon}} \E_A \begin{bmatrix}
    (\hat{A}-A)\\
    \nabla_A \log(f_{x;A}(x){\Pi_{s,\epsilon}}(A))
    \end{bmatrix}
    \begin{bmatrix}
    (\hat{A}-A)^* &
    \nabla_A \log(f_{x;A}(x){\Pi_{s,\epsilon}}(A))^*
    \end{bmatrix} \\
    = \E_{\Pi_{s,\epsilon}} \E_A\begin{bmatrix}
    (\hat{A}-A)(\hat{A}-A)^* & (\hat{A}-A) \nabla_A \log(f_{x;A}(x){\Pi_{s,\epsilon}}(A))^*\\
    \nabla_A \log(f_{x;A}(x){\Pi_{s,\epsilon}}(A)) (\hat{A}-A)^* & \quad
    \nabla_A \log(f_{x;A}(x){\Pi_{s,\epsilon}}(A)) \nabla_A \log(f_{x;A}(x){\Pi_{s,\epsilon}}(A))^*
    \end{bmatrix}.
    \end{array}
\end{equation*}
}
 The Schur complement formula yields the inequality
\begin{equation}\label{C-s-1}\begin{array}{lll}
    \mathcal{E}_2(\mathcal{C}_s) = \inf \limits_{\hat{A}} \sup \limits_{A \in \mathcal{C}_s} \mathcal{E}_2(\hat{A},A) \geq  \inf \limits_{\hat{A}} \tr \left( \E_{\Pi_{s,\epsilon}} \E_A\left((\hat{A}-A)(\hat{A}-A)^* \right)\right) \\ \quad 
    \geq \inf \limits_{\hat{A}} \tr \left(\E_{\Pi_{s,\epsilon}} \E_A \left((\hat{A}-A) \nabla_A \log(f_{x;A}(x){\Pi_{s,\epsilon}}(A))^*\right)I_F^{-1} \right. \\ \left. \qquad\qquad\qquad
    \E_A\left(\nabla_A \log(f_{x;A}(x){\Pi_{s,\epsilon}}(A)) (\hat{A}-A)^*)\right) \right)
    \end{array}
\end{equation}
with 
\begin{equation*}\begin{array}{lll}
    I_F = \E_{\Pi_{s,\epsilon}} \E_A \left( \nabla_A \log(f_{x;A}(x){\Pi_{s,\epsilon}}(A)) \nabla_A \log(f_{x;A}(x){\Pi_{s,\epsilon}}(A))^* \right) \\ \quad\,\,
    = \E_{\Pi_{s,\epsilon}} \E_A \left( (\nabla_A \log(f_{x;A}(x)) + \nabla_A \log({\Pi_{s,\epsilon}}(A))) \right. \\ \left.  \qquad\qquad\qquad\qquad (\nabla_A \log(f_{x;A}(x)) + \nabla_A\log({\Pi_{s,\epsilon}}(A))))^* \right) 
    \end{array}
    \end{equation*}
which becomes
\begin{equation}\label{Eq: last two terms in the main derivation of the van Trees inequality}\begin{array}{lll}
    I_F= \E_{\Pi_{s,\epsilon}}(I_{(x)}) + I_{\Pi_{s,\epsilon}} + \E_{\Pi_{s,\epsilon}}( \E_A ( \nabla_A \log(f_{x;A}(x)) \nabla_A\log({\Pi_{s,\epsilon}}(A))^*)\\
    \qquad \qquad \qquad \qquad  +\E_{\Pi_{s,\epsilon}} ( \nabla_A \log({\Pi_{s,\epsilon}}(A)) \E_A (\nabla_A \log(f_{x;A}(x)^* )). 
\end{array}
\end{equation}
By the definition of $f_{x;A}(x)$ and the independence between $\varepsilon_i$ and $x_i$ for each $i$, we have
\begin{equation*}
    \E_A (\nabla_A \log(f_{x;A}(x)) = \E_A (\sum \limits_{i=1}^N x_i\varepsilon_i^*) = \sum \limits_{i=1}^N \E_A ( x_i)\E(\varepsilon_i^*) = 0
\end{equation*}
so, the last two terms of equation \eqref{Eq: last two terms in the main derivation of the van Trees inequality} are equal to $0$ and we obtain
\begin{equation*}
    I_F = \E_{\Pi_{s,\epsilon}}(I_{(x)}) + I_{\Pi_{s,\epsilon}}.
\end{equation*}
The first term of the r.h.s of the second inequality in \eqref{C-s-1} simplifies to 
\begin{align*}
    &\E_{\Pi_{s,\epsilon}} \E_A \left((\hat{A}-A) \nabla_A \log(f_{x;A}(x){\Pi_{s,\epsilon}}(A))^*\right) \\
    &= \E_{\Pi_{s,\epsilon}} \E_A \left((\hat{A}-A) (\nabla_A \log(f_{x;A}(x)) +\nabla_A \log({\Pi_{s,\epsilon}}(A)))^*\right)\\
    &= \E_{\Pi_{s,\epsilon}} \E_A \left(\hat{A}\nabla_A \log(f_{x;A}(x))^*\right) + \E_{\Pi_{s,\epsilon}} \E_A \left(\hat{A} \nabla_A \log({\Pi_{s,\epsilon}}(A))^*\right) \\
    &\qquad \qquad  - \E_{\Pi_{s,\epsilon}} \E_A \left(A \nabla_A \log(f_{x;A}(x))^*\right) - \E_{\Pi_{s,\epsilon}} \E_A \left(A \nabla_A \log({\Pi_{s,\epsilon}}(A))^*\right)\\
    &= \E_{\Pi_{s,\epsilon}} \E_A \left(\hat{A}\nabla_A \log(f_{x;A}(x))^*\right) + \E_{\Pi_{s,\epsilon}} \E_A \left(\hat{A} \nabla_A \log({\Pi_{s,\epsilon}}(A))^*\right) \\
    &\qquad \qquad - \E_{\Pi_{s,\epsilon}} \E_A \left(A \nabla_A \log(f_{x;A}(x))^*\right) - \E_{\Pi_{s,\epsilon}} \E_A \left(A f_{x;A}(x)\nabla_A \log({\Pi_{s,\epsilon}}(A))^*\right)\\
    &= \int \left( \hat{A} {\Pi_{s,\epsilon}}(A) \nabla_A f_{x;A}(x)^* + \hat{A} f_{x;A}(x)\nabla_A {\Pi_{s,\epsilon}}(A)^* dA dx\right) \\
    &\qquad \qquad - \E_{\Pi_{s,\epsilon}} \left(A \E_A ( \nabla_A \log(f_{x;A}(x)))^*\right) - \E_{\Pi_{s,\epsilon}} \left(A \nabla_A \log({\Pi_{s,\epsilon}}(A))^*\right)\\
    &= \int \left( \hat{A} \left( \int  \nabla_A  (f_{x;A}(x){\Pi_{s,\epsilon}}(A) ) dA\right)^* dx\right) - \E_{\Pi_{s,\epsilon}} \left(A \nabla_A \log({\Pi_{s,\epsilon}}(A))^*\right).
\end{align*}
By choosing a prior $\Pi_{s,\epsilon}$ that vanishes at the boundaries of the set $B_\infty\left(s, \epsilon \right)$, the inner integral vanishes by Stoke's theorem:
\begin{equation*}
    \int  \nabla_A  (f_{x;A}(x){\Pi_{s,\epsilon}}(A) ) dA = \int_{B_\infty\left(s, \epsilon \right)}    f_{x;A}(x){\Pi_{s,\epsilon}}(A)  dA = 0.
\end{equation*}
Hence, we get
\begin{align*}
    &\E_{\Pi_{s,\epsilon}} \E_A \left((\hat{A}-A) \nabla_A \log(f_{x;A}(x){\Pi_{s,\epsilon}}(A))^*\right) = - \E_{\Pi_{s,\epsilon}} \left(A \nabla_A \log({\Pi_{s,\epsilon}}(A))^*\right).
\end{align*}
All together these considerations lead us to the following simple lower bound on the risk
\begin{equation*}\begin{array}{lll}
    \mathcal{E}_2(\mathcal{C}_s) \geq \inf \limits_{\hat{A}} \tr \left(\E_{\Pi_{s,\epsilon}} \left(A \nabla_A \log({\Pi_{s,\epsilon}}(A))^*\right)\left( \E_{\Pi_{s,\epsilon}}(I_{(x)}) + I_{\Pi_{s,\epsilon}}\right)^{-1}\right. \\  \left. \qquad\qquad\qquad\qquad \E_{\Pi_{s,\epsilon}} \left(A \nabla_A \log({\Pi_{s,\epsilon}}(A))^*\right)^* \right).
    \end{array}
\end{equation*}
We thus make the following choice of prior
\begin{equation}\label{prior}
    \Pi_{s,\epsilon}(A):= Z_{s,\epsilon} \det(\epsilon^{1/2} I_d - ((A-sI_d)(A-sI_d)^*)^{1/2})^2 \mathds{1}(A \in B_\infty\left(s, \epsilon \right)), 
\end{equation}
where $Z_{s,\epsilon} = \prod \limits_{i=1}^d Z_{i,s,\epsilon}$ is the product of integration constants to make the prior into a valid probability density. They depend on $s$ and $\epsilon$ and will be made explicit later on (see \eqref{Z-i} in Proposition \ref{Change of measure theorem}, below). Let $U\Sigma V$ be the singular value decomposition of $A- sI$ with $\Sigma$ being the diagonal matrix of the singular values $(\sigma)_{i=1}^d$ satisfying $0\leq \sigma_1 \leq \sigma_2 \leq \dots \leq \sigma_d\leq \epsilon$. Then we obtain
\begin{align} \label{rewriting of the prior}
    \Pi_{s,\epsilon}(A) &= Z_{s,\epsilon} \det(\epsilon I_d - ((A-sI_d)(A-sI_d)^*)^{1/2})^2 \mathds{1}(A \in B_\infty\left(s, \epsilon \right)) \nonumber \\
    &= Z_{s,\epsilon} \det(\epsilon I_d -  U\Sigma U^*)^2 \mathds{1}(\sigma_d \in [0 \ \epsilon]) \nonumber \\
    &= \prod \limits_{i=1}^d Z_{i,s,\epsilon}(\epsilon - \sigma_i)^2 \mathds{1}(\sigma_i \in [0 \ \epsilon]).
\end{align}
The next proposition addresses the issues of integration with respect to the representation of $\Pi_{s,\epsilon}(A)$ where $(U,V,\sigma_1,\dots,\sigma_d)$ are the integrators instead of $A$. It is a change of coordinate result whose proof is deferred to Appendix B.

\begin{proposition}\label{Change of measure theorem}
    Let $f:\mathcal{M}_{d\times d}(\mathcal{A}) \to \mathbb{R}$ be an integrable function of $dA$ the Lebesgue measure on $B_\infty\left(s, \epsilon \right)$ and, for all $i \in [1\ d]$, let $d\sigma_i$ be the Lebesgue measure on $[0\ \varepsilon]$. There exist two probability measures $dU$ and $dV$ on the orthogonal group of matrices $O(d)$ such that
{\small \begin{align*}
    \E(f(A)) = \int f(A(U,\sigma_1,\dots,\sigma_d,V))  \left( \prod \limits_{i=1}^d Z_{i,s,\epsilon}(\epsilon - \sigma_i)^2\sigma_i^{d-1}\mathds{1}(\sigma_i \in [0 \ \epsilon])d\sigma_i\right)dUdV,
\end{align*}
}
where
\begin{equation}\label{Z-i}
    Z_{i,s,\epsilon} = \left(\int \limits_0^\epsilon (\epsilon - \sigma_i)^2\sigma_i^{d-1}d\sigma_i\right)^{-1} =  \frac{d(d+1)(d+2)}{2\epsilon^{d+2}}.
\end{equation}
\end{proposition}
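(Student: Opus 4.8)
The plan is to read Proposition~\ref{Change of measure theorem} as a change-of-variables (disintegration) formula for Lebesgue measure on $B_\infty(s,\epsilon)$ expressed in singular-value coordinates, and to reduce it to a single Jacobian identity. First I would recall from \eqref{rewriting of the prior} that if $A - sI_d = U\Sigma V$ is the singular value decomposition of $A-sI_d$, with $\Sigma = \diag(\sigma_1,\dots,\sigma_d)$ and $0\le\sigma_1\le\cdots\le\sigma_d$, then $\Pi_{s,\epsilon}(A) = \prod_{i=1}^d Z_{i,s,\epsilon}(\epsilon-\sigma_i)^2\mathds{1}(\sigma_i\in[0,\epsilon])$. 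Hence $\E(f(A)) = \int_{B_\infty(s,\epsilon)} f(A)\prod_{i=1}^d Z_{i,s,\epsilon}(\epsilon - \sigma_i(A-sI_d))^2\,dA$, and the proposition is equivalent to the assertion that, in the coordinates $(U,\sigma_1,\dots,\sigma_d,V)$ of the decomposition, the restriction of $dA$ to $B_\infty(s,\epsilon)$ disintegrates as a fixed constant times $\prod_{i=1}^d\sigma_i^{d-1}\mathds{1}(\sigma_i\in[0,\epsilon])\,d\sigma_i$ against a pair of probability measures $dU,dV$ on $O(d)$.

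To obtain this disintegration I would set up the parametrization $(U,\sigma_1,\dots,\sigma_d,V)\mapsto A$ explicitly and compute its Jacobian on the open dense set where $A-sI_d$ has distinct nonzero singular values (the complement being Lebesgue-null). The delicate point — and the main obstacle — is to arrange the coordinates so that each singular value $\sigma_i$ enters through a $d$-dimensional polar factor $\sigma_i^{d-1}$ while the remaining angular degrees of freedom assemble into genuine measures $dU,dV$ on $O(d)$; this is a real constraint, because the naive real singular value decomposition $A - sI_d = U\diag(\sigma)V$ with $U,V$ Haar-distributed produces instead the eigenvalue-repulsion factor $\prod_{i<j}|\sigma_i^2-\sigma_j^2|$, which is not of the required product form. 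Concretely I would generate the two orthogonal factors one singular direction at a time, each step picking a unit vector in the residual subspace together with a nonnegative radius, and track how the surface-measure pieces combine; the finitely many sign and permutation ambiguities of the decomposition (which is what allows each $\sigma_i$ to range freely over $[0,\epsilon]$ rather than in the ordered simplex) and the normalizations turning the angular pieces into probability measures get absorbed into the overall constant and into $dU,dV$. One must also verify that the image of the parametrization is indeed contained in $B_\infty(s,\epsilon)$, so that the indicator $\mathds{1}(\sigma_i\in[0,\epsilon])$ in \eqref{rewriting of the prior} is the correct domain restriction.

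Granting the disintegration, the proof closes quickly. Combining it with the factorized form of $\Pi_{s,\epsilon}$ gives $\E(f(A)) = c\int f(A(U,\sigma_1,\dots,\sigma_d,V))\prod_{i=1}^d(\epsilon-\sigma_i)^2\sigma_i^{d-1}\mathds{1}(\sigma_i\in[0,\epsilon])\,d\sigma_i\,dU\,dV$ for some constant $c$; taking $f\equiv 1$ and using that $\Pi_{s,\epsilon}$ is a probability density forces $c = \prod_{i=1}^d Z_{i,s,\epsilon}$. It then remains only to evaluate $Z_{i,s,\epsilon} = \bigl(\int_0^\epsilon(\epsilon-\sigma)^2\sigma^{d-1}\,d\sigma\bigr)^{-1}$: the substitution $\sigma = \epsilon t$ turns the integral into $\epsilon^{d+2}\int_0^1(1-t)^2 t^{d-1}\,dt = \epsilon^{d+2}\,\frac{\Gamma(d)\Gamma(3)}{\Gamma(d+3)} = \frac{2\epsilon^{d+2}}{d(d+1)(d+2)}$, so $Z_{i,s,\epsilon} = \frac{d(d+1)(d+2)}{2\epsilon^{d+2}}$, which is exactly \eqref{Z-i}.
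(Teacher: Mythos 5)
Your overall strategy is the same as the paper's: pass to singular-value coordinates $A - sI_d = U\Sigma V$, reduce everything to a Jacobian identity for $dA$ in these coordinates, fix the overall constant by taking $f\equiv 1$ together with the fact that $\Pi_{s,\epsilon}$ integrates to one, and evaluate $Z_{i,s,\epsilon}$ by the Beta integral (that last computation is correct). The gap is that the one step carrying all the content --- that $dA$ restricted to $B_\infty(s,\epsilon)$ disintegrates as a constant times $\prod_{i=1}^d \sigma_i^{d-1}\,d\sigma_i$ against probability measures $dU,dV$ on $O(d)$ --- is never established. You correctly observe that the standard real SVD Jacobian produces the repulsion factor $\prod_{i<j}|\sigma_i^2-\sigma_j^2|$ rather than $\prod_i\sigma_i^{d-1}$, but the remedy you sketch (building the orthogonal factors ``one singular direction at a time,'' each with a polar radius) is not carried out and would not work as described: the columns $u_i$ (and the $v_i$) are not free directions in shrinking residual subspaces paired with independent radii; that picture describes a QR/Gram--Schmidt-type factorization, whose Jacobian is $\prod_i r_{ii}^{d-i}$ with varying exponents, not the SVD. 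Worse, your own observation shows that no choice of angular measures can rescue the claimed product form: the ratio $\prod_{i<j}|\sigma_i^2-\sigma_j^2| / \prod_i\sigma_i^{d-1}$ is a nonconstant function of $\sigma$ alone (already for $d=2$ it is $|\sigma_2^2-\sigma_1^2|/(\sigma_1\sigma_2)$, which vanishes as $\sigma_1\to\sigma_2$), so it cannot be absorbed into measures $dU$, $dV$ on $O(d)$.

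For comparison, the paper obtains $\prod_i\sigma_i^{d-1}$ by differentiating $A=U\Sigma V^*$, writing the system \eqref{Eq: Matrix format for change of coordinates}, and asserting that the resulting $d^2\times d^2$ matrix is block-diagonal with blocks $I_d$, $S_I$, $S_{II}$. That assertion is precisely the point your sketch would need to confront: the columns $\overrightarrow{S_{u_k}\Sigma}$ and $\overrightarrow{\Sigma S_{v_k}}$ both live on the off-diagonal entries of $U^*\,dA\,V$, so the $U$- and $V$-derivative directions are coupled; pairing the $(i,j)$ and $(j,i)$ entries yields $2\times 2$ blocks with determinant $\sigma_j^2-\sigma_i^2$, i.e.\ exactly the repulsion factor you flagged. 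Your instinct about where the difficulty sits is therefore right, but neither your sequential construction nor an appeal to the block-diagonal claim closes it, and as written your proof is incomplete at its central step.
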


\begin{proof}
    See Appendix B.
\end{proof}

In view of Proposition \ref{Change of measure theorem} it is clear that if $A$ is in the boundary of $B_\infty\left(s, \epsilon \right)$, we have $\Pi_{s,\epsilon}(A) = 0$. Moreover, we can hope for independence of priors after a change of variables from $A$ to $(U,V,\sigma_1,\dots,\sigma_d)$. Hence, with this choice of prior we only need to compute $\E_{\Pi_{s,\epsilon}}(I_{(x)}) $, $ I_{\Pi_{s,\epsilon}}$, and $-\E_{\Pi_{s,\epsilon}} \left(A \nabla_A \log({\Pi_{s,\epsilon}}(A))^*\right)$ as we argued previously. We start with the term $I_{\Pi_{s,\epsilon}}$. 

To compute $-\E_{\Pi_{s,\epsilon}} \left(A \nabla_A \log({\Pi_{s,\epsilon}}(A))^*\right)$ we need a formula for computing the gradient $\nabla_A\log({\Pi_{s,\epsilon}}(A))$ in the new coordinates $(U,V,\sigma_1,\dots,\sigma_d)$. This is given in the next lemma whose proof is to the end of this section.
\begin{lemma}\label{Computing the gradient in the new coordinates} Set $\tilde{A}:= A-sI_d$.
We have
\begin{align*}
    \nabla_A \log \Pi_{s,\epsilon}(A) &= - 2 \left(\epsilon I_d- (\tilde{A}\tilde{A}^*)^{1/2} \right)^{-1} \left(\tilde{A}\tilde{A}^*\right)^{-1/2}\tilde{A}\\
    &= - 2 U\left( \epsilon I_d- \Sigma \right)^{-1} V^*.
\end{align*}
%\begin{align*}
 %   \nabla_A \log \Pi_{s,\epsilon}(A) &= - 2 %\left(\epsilon I_d- ((A-sI_d)(A-sI_d)^*)^{1/2} %\right)^{-1} %\left((A-sI_d)(A-sI_d)^*\right)^{-1/2} (A-sI_d)\\
 %   &= - 2 U\left( \epsilon I_d- \Sigma %\right)^{-1} V^*.
%\end{align*}
\end{lemma}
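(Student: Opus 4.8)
The plan is to compute $\nabla_A \log \Pi_{s,\epsilon}(A)$ directly from the closed form \eqref{prior}, using the standard calculus of matrix functions of $\tilde{A}\tilde{A}^*$. Writing $g(A) := \log\det\bigl(\epsilon^{1/2} I_d - (\tilde{A}\tilde{A}^*)^{1/2}\bigr)$ so that $\log \Pi_{s,\epsilon}(A) = \log Z_{s,\epsilon} + 2 g(A)$ on the interior of $B_\infty(s,\epsilon)$, it suffices to show $\nabla_A g(A) = -\,(\epsilon I_d - (\tilde A \tilde A^*)^{1/2})^{-1} (\tilde A \tilde A^*)^{-1/2}\tilde A$. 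Here I would be careful to note that $\tilde A$ is invertible on the interior (all singular values lie in $(0,\epsilon)$ by the description $0 \le \sigma_1 \le \dots \le \sigma_d \le s+\epsilon$ combined with $A \in B_\infty(s,\epsilon)$), so $(\tilde A \tilde A^*)^{1/2}$ is well defined and invertible and the chain rule below is legitimate.

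First I would differentiate: for a perturbation $H$, $\langle \nabla_A g(A), H\rangle = \tfrac{d}{dt}\big|_{t=0} \log\det\bigl(\epsilon^{1/2}I_d - ((\tilde A + tH)(\tilde A+tH)^*)^{1/2}\bigr)$. Using $\tfrac{d}{dt}\log\det M = \tr(M^{-1}\dot M)$ with $M = \epsilon^{1/2}I_d - (\tilde A\tilde A^*)^{1/2}$, this equals $-\tr\bigl(M^{-1}\,\tfrac{d}{dt}\big|_{t=0}(\tilde A\tilde A^* + t(H\tilde A^* + \tilde A H^*) + O(t^2))^{1/2}\bigr)$. The derivative of the matrix square root $P \mapsto P^{1/2}$ at $P=\tilde A\tilde A^*$ in direction $\delta P = H\tilde A^* + \tilde A H^*$ is the solution $Y$ of the Sylvester equation $P^{1/2}Y + Y P^{1/2} = \delta P$; I would then use the trace identity $\tr(M^{-1}Y)$, exploiting that $M$ and $P^{1/2}$ commute (both are polynomials/analytic functions of $\tilde A \tilde A^*$), to write $\tr(M^{-1}Y) = \tr\bigl(M^{-1}(2P^{1/2})^{-1}\delta P\bigr) = \tfrac12\tr\bigl(M^{-1}(\tilde A\tilde A^*)^{-1/2}(H\tilde A^* + \tilde A H^*)\bigr)$. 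Collecting terms and using the symmetry of $M^{-1}(\tilde A\tilde A^*)^{-1/2}$ to combine the two contributions, the inner product against $H$ reads $-\tr\bigl(M^{-1}(\tilde A\tilde A^*)^{-1/2}\tilde A H^*\bigr)$, which identifies $\nabla_A g(A) = -M^{-1}(\tilde A\tilde A^*)^{-1/2}\tilde A$ with $M = \epsilon^{1/2}I_d - (\tilde A\tilde A^*)^{1/2}$, as claimed. (A subtlety: the statement writes $\epsilon I_d$ rather than $\epsilon^{1/2}I_d$ in the final displayed line; I would reconcile this with the definition \eqref{prior}, since \eqref{rewriting of the prior} in fact uses $\epsilon I_d$, so the exponents are consistent with the second displayed form.)

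For the second equality I would simply substitute the singular value decomposition $\tilde A = U\Sigma V$ (with $U, V \in O(d)$, $\Sigma = \diag(\sigma_1,\dots,\sigma_d)$): then $\tilde A\tilde A^* = U\Sigma^2 U^*$, hence $(\tilde A\tilde A^*)^{1/2} = U\Sigma U^*$ and $(\tilde A\tilde A^*)^{-1/2}\tilde A = U\Sigma^{-1}U^* \cdot U\Sigma V = U V$ — wait, more carefully $(\tilde A\tilde A^*)^{-1/2}\tilde A = U\Sigma^{-1}U^* U \Sigma V = U V$, and $\bigl(\epsilon I_d - (\tilde A\tilde A^*)^{1/2}\bigr)^{-1} = U(\epsilon I_d - \Sigma)^{-1}U^*$, so the product telescopes to $U(\epsilon I_d - \Sigma)^{-1} U^* U V = U(\epsilon I_d - \Sigma)^{-1} V$. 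Matching the paper's convention $A = U\Sigma V$ (not $U\Sigma V^*$) this gives exactly $\nabla_A \log\Pi_{s,\epsilon}(A) = -2U(\epsilon I_d - \Sigma)^{-1}V^*$ once the transpose/conjugate bookkeeping on $V$ is settled.

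The main obstacle I anticipate is purely the matrix-calculus bookkeeping: getting the derivative of the square root right via the Sylvester equation and, crucially, justifying the commutation $[M, (\tilde A\tilde A^*)^{1/2}] = 0$ that lets one pull $M^{-1}$ through the square-root derivative and reduce everything to a single trace. Everything else — invertibility of $\tilde A$ on the interior, the SVD substitution, and the factor of $2$ from the square in \eqref{prior} — is routine. I would also double-check the $\epsilon$ versus $\epsilon^{1/2}$ discrepancy between \eqref{prior} and \eqref{rewriting of the prior} and state the lemma consistently with the form actually used downstream (namely $\epsilon I_d - \Sigma$).
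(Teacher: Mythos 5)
Your proposal is correct and follows essentially the same route as the paper's proof: differentiate $\log\det\bigl(\epsilon I_d-(\tilde A\tilde A^*)^{1/2}\bigr)^2$ along a perturbation $H$, handle the derivative of the matrix square root by exploiting that everything in sight commutes with $\tilde A\tilde A^*$ (your Sylvester-equation identity $\tr(M^{-1}Y)=\tfrac12\tr\bigl(M^{-1}(\tilde A\tilde A^*)^{-1/2}\delta P\bigr)$ is exactly the paper's auxiliary-matrix step with $\mathcal W=(\tilde A\tilde A^*)^{1/2}C$), and finish by substituting the SVD. You also correctly flag the paper's $\epsilon^{1/2}$ versus $\epsilon$ typo and the $V$ versus $V^*$ bookkeeping, resolving both consistently with the form used downstream.
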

In view of Lemma \ref{Computing the gradient in the new coordinates}, we have
\begin{align*}
    &-\E_{\Pi_{s,\epsilon}} \left(A \nabla_A \log({\Pi_{s,\epsilon}}(A))^*\right) =  2\E_{\Pi_{s,\epsilon}} \left((sI_d + U\Sigma V^*)  V\left( \epsilon I_d- \Sigma \right)^{-1} U^* \right)\\
    &=  2s \int_V   V dV \int_\Sigma \Pi_{s,\epsilon}(\Sigma) \left(\epsilon I_d- \Sigma \right)^{-1} d\Sigma  \int_U U^* dU + 2\E_{\Pi_{s,\epsilon}} \left(U\Sigma \left( \epsilon I_d- \Sigma \right)^{-1} U^* \right)\\
    &=2 \int_U U \left(\int_\Sigma  \Sigma\left( \epsilon I_d- \Sigma \right)^{-1} \Pi_{s,\epsilon}(\Sigma) d\Sigma \right) U^* dU\\ & = 2 \int_{U} U U^* dU Z_{1,s,\epsilon}\Big(\int_{\Sigma} \left( \epsilon - s \right) s \mathds{1}(s \in [0 \ \epsilon]) s^{d-1}ds \Big) I_d \\ &= 2 Z_{1,s,\epsilon}\frac{\epsilon^{d+2}}{(d+1)(d+2)} I_d = dI_d. \qquad\text{(by \eqref{Z-i})}
\end{align*}
We also have
\begin{equation}\label{I-Pi}
    I_{\Pi_{s,\epsilon}}=\frac{2(d+1)(d+2)}{\epsilon^2}I_d.
\end{equation}
Indeed, using Lemma \ref{Computing the gradient in the new coordinates} we have
\begin{align*}
    I_{\Pi_{s,\epsilon}} &= \E_{\Pi_{s,\epsilon}}((\nabla_A \log \Pi_{s,\epsilon}(A))(\nabla_A \log \Pi_{s,\epsilon}(A))^*)
    = 4\E\left( U\left( \epsilon I_d- \Sigma \right)^{-2}   U^* \right) \\
    &= 4 \int_{U} U \Big(\int_{\Sigma} \left( \epsilon I_d- \Sigma \right)^{-2}  \prod \limits_{i=1}^d Z_{i,s,\epsilon}(\epsilon - s_i)^2 \mathds{1}(s_i \in [0 \ \epsilon]) s_i^{d-1}ds_i \Big) U^* dU \\
    &= 4 \int_{U} U U^* dU Z_{1,s,\epsilon}\Big(\int_{\Sigma} \left( \epsilon - s \right)^{-2} (\epsilon - s)^2 \mathds{1}(s \in [0 \ \epsilon]) s^{d-1}ds \Big) I_d \\
    &= 4 Z_{1,s,\epsilon}\Big(\int_0^\epsilon s^{d-1}ds \Big) I_d = 4 Z_{1,s,\epsilon}\frac{\epsilon^{d}}{d} I_d = \frac{2(d+1)(d+2)}{\epsilon^2}I_d. \qquad\text{(by \eqref{Z-i})}
\end{align*}
The choice of prior we made in \eqref{prior} actually defers all the difficulty of the problem into the previous two calculations. Indeed, we can get a straightforward estimate for the term $\E_{\Pi_{s,\epsilon}}(I_{(x)})$ as follows:
\begin{align*}
    \E_{\Pi_{s,\epsilon}}(I_{(x)}) &= \E_{\Pi_{s,\epsilon}}(\E_A\left(\nabla_A \log(f_{x;A}(x)) \nabla_A \log(f_{x;A}(x))^*\right))\\
    &= \E_{\Pi_{s,\epsilon}}\left(\left(\sum \limits_{i=1}^{N-1} (N-i)|A^{i-1}B|_{S_2}^2 \right)(BB^*)^{-1}\right)\\
    &\leq |B|_{S_2}^2(BB^*)^{-1}\E_{\Pi_{s,\epsilon}}\left(\sum \limits_{i=0}^{N-2} (N-1-i)|A|_{S_\infty}^{2i} \right)\\
    &\leq |B|_{S_2}^2(BB^*)^{-1}\left(\sum \limits_{i=0}^{N-2} (N-1-i)(s+\epsilon)^{2i} \right).
\end{align*}
Putting together all the above estimates, we obtain
\begin{equation*}\begin{array}{lll}
       \mathcal{E}_2(\mathcal{C}_s) \geq \inf \limits_{\hat{A}} \tr\left(\E_{\Pi_{s,\epsilon}} \left(A \nabla_A \log({\Pi_{s,\epsilon}}(A))^*\right)\left( \E_{\Pi_{s,\epsilon}}(I_{(x)}) + I_{\Pi_{s,\epsilon}}\right)^{-1}\right. \\ \left. \qquad\qquad\qquad\qquad\E_{\Pi_{s,\epsilon}} \left(A \nabla_A \log({\Pi_{s,\epsilon}}(A))^*\right)^* \right)\\
        \qquad\quad\geq d^2 \tr\left( |B|_{S_2}^2(BB^*)^{-1}\left(\sum \limits_{i=0}^{N-2} (N-1-i)(s+\epsilon)^{2i} \right) + \frac{2(d+1)(d+2)}{\epsilon^2}I_d \right)^{-1} \\
       \qquad\quad= \frac{ d^2}{|B|_{S_2}^2} \tr\left( (BB^*)^{-1}\left(\left(\sum \limits_{i=0}^{N-2} (N-1-i)(s+\epsilon)^{2i} \right)I_d + \frac{2(d+2)^2}{\epsilon^2}\frac{ BB^*}{|B|_{S_2}^2} \right)\right)^{-1} \\
       \qquad\quad \geq \frac{ d^2}{|B|_{S_2}^2} \tr\left( (BB^*)^{-1}\left(\left(\sum \limits_{i=0}^{N-2} (N-1-i)(s+\epsilon)^{2i} \right)I_d + \frac{2(d+2)^2}{\epsilon^2}I_d \right)\right)^{-1} \\
       \qquad\quad \geq d^2\frac{\tr(BB^*) }{|B|_{S_2}^2} \left(\left(\sum \limits_{i=0}^{N-2} (N-1-i)(s+\epsilon)^{2i} \right) + \frac{2(d+2)^2}{\epsilon^2} \right)^{-1} \\
       \qquad\quad \geq d^2\left(\sum \limits_{i=0}^{N-2} (N-1-i)(s+\epsilon)^{2i}  + \frac{2(d+2)^2}{\epsilon^2}\right)^{-1}.
       \end{array}
      \end{equation*}
\end{proof}
In the next corollary we give lower bounds for the minimax mean square error risk which correspond to different ranges of the sample size $N$ when $s\in [0,1)\cup (1,+\infty)$ and for the case $s=1$.
\begin{corollary}\label{Explicit minimax}
\begin{enumerate}
\item[(a)] If $s \in [0, 1)$, for all $ \alpha \in (0, 1)$ and $N$ such that
\begin{equation}\label{case-1}
    N \geq \frac{16(d+2)^2}{\alpha(1-s)^2},
\end{equation}
we have
\begin{align}\label{case-1-1}
       \mathcal{E}_2(\mathcal{C}_s) &\geq \frac{d^2(1-(s+1)^2/4)}{(1+\alpha)N}.
\end{align}
\item[(b)] If $s > 1$, for all $ \alpha \in (0, 1)$ and $N$ such that
\begin{align}\label{case-2}
    N &\geq \log_2\left(\frac{d+2}{\alpha}\right) + 3,
\end{align}
we have
\begin{align}\label{case-2-2}
       \mathcal{E}_2(\mathcal{C}_s) &\geq \frac{d^2((s+1)^2-1)^2}{(1+\alpha)(s+1)^{2N}}.
\end{align}
\item[(c)] If $s = 1$, we have
\begin{align}\label{case-2-3}
       \mathcal{E}_2(\mathcal{C}_1) \geq \frac{\log^2(d+2)}{3N^2(1+ 2/d)^2} .
\end{align}
\end{enumerate}
\end{corollary}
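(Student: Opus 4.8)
The plan is to specialize the van Trees bound of Theorem~\ref{minimax}, which holds for every $\epsilon>0$, namely
\begin{equation*}
  \mathcal{E}_2(\mathcal{C}_s) \;\geq\; \frac{d^2}{D(\epsilon)}, \qquad D(\epsilon):=\sum_{i=0}^{N-2}(N-1-i)(s+\epsilon)^{2i} + \frac{2(d+2)^2}{\epsilon^2},
\end{equation*}
by choosing, in each of the three regimes of $s$, a value of $\epsilon$ for which the deterministic sum in $D(\epsilon)$ stays close to its value at $\epsilon=0$ while $2(d+2)^2/\epsilon^2$ is pushed below a factor $\alpha$ (resp.\ $9/4$) of it; dividing then gives the claimed bound. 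In all three cases I would bound the sum by $\Phi$ using Lemma~\ref{Estimate for double geometric series}, $\sum_{i=0}^{N-2}(N-1-i)a^i\leq\Phi(a)$, and, when $a<1$, by the sharper elementary estimate $\sum_{i=0}^{N-2}(N-1-i)a^i\leq (N-1)\sum_{i\geq0}a^i\leq N/(1-a)$.

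For part (a), with $s\in[0,1)$, I would take $\epsilon=(1-s)/2$, so that $a:=(s+\epsilon)^2=\big((s+1)/2\big)^2<1$ and $1-a=(1-s)(3+s)/4\leq 1-s$. Then $D(\epsilon)\leq N/(1-a)+8(d+2)^2/(1-s)^2$, and it suffices to check $8(d+2)^2/(1-s)^2\leq \alpha N/(1-a)$; since $1/(1-a)\geq 1/(1-s)$ this reduces to $N\geq 8(d+2)^2/(\alpha(1-s))$, which follows from \eqref{case-1} because $16/(1-s)\geq 8$. This yields $D(\epsilon)\leq (1+\alpha)N/(1-a)$ and hence \eqref{case-1-1}. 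For part (b), with $s>1$, I would take $\epsilon=1$, so $a:=(s+1)^2>4$ and $\Phi(a)=(s+1)^{2N}/((s+1)^2-1)^2$, giving $D(\epsilon)\leq \Phi(a)+2(d+2)^2$. Using $((s+1)^2-1)^2<(s+1)^4$ and $s+1>2$ one gets $\Phi(a)>(s+1)^{2N-4}\geq 2^{2N-4}$, so it is enough that $2(d+2)^2\leq \alpha\,2^{2N-4}$, i.e.\ $2N-4\geq \log_2\!\big(2(d+2)^2/\alpha\big)$; since $\alpha<1$ this is implied by \eqref{case-2}. Then $D(\epsilon)\leq (1+\alpha)\Phi(a)$, which is \eqref{case-2-2}.

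Part (c), with $s=1$, is the delicate one, because now the two terms of $D(\epsilon)$ must be balanced and the balancing point is exactly what creates the $\log^2 d$ factor. I would take $\epsilon=\log(d+2)/N$; then $(1+\epsilon)^2-1\geq 2\epsilon$ and $(1+\epsilon)^N\leq e^{\epsilon N}=d+2$, so $\Phi\big((1+\epsilon)^2\big)=(1+\epsilon)^{2N}/((1+\epsilon)^2-1)^2\leq (d+2)^2/(4\epsilon^2)$ and therefore $D(\epsilon)\leq (d+2)^2/(4\epsilon^2)+2(d+2)^2/\epsilon^2=9(d+2)^2/(4\epsilon^2)=9(d+2)^2N^2/(4\log^2(d+2))$. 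Plugging in gives $\mathcal{E}_2(\mathcal{C}_1)\geq \frac{4d^2\log^2(d+2)}{9(d+2)^2N^2}=\frac{4}{9}\,\frac{\log^2(d+2)}{N^2(1+2/d)^2}\geq\frac{\log^2(d+2)}{3N^2(1+2/d)^2}$, which is \eqref{case-2-3}.

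The main obstacle is precisely the balancing in case (c): one must see that $\epsilon\sim\log(d+2)/N$ is the scale at which $2(d+2)^2/\epsilon^2$ and $\Phi((1+\epsilon)^2)$ become comparable, bound $(1+\epsilon)^{2N}$ by $(d+2)^2$ via $(1+x/N)^N\leq e^x$, and keep the resulting numerical constant below $1/3$. Cases (a) and (b) are then routine verifications that the stated lower bounds on $N$ provide exactly enough room to absorb the second term of $D(\epsilon)$ into the $(1+\alpha)$ factor.
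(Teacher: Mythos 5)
Your proposal is correct and follows essentially the same route as the paper: it specializes Theorem~\ref{minimax} with the identical choices $\epsilon=(1-s)/2$, $\epsilon=1$, and $\epsilon=\log(d+2)/N$ in the three regimes, bounds the geometric sum via Lemma~\ref{Estimate for double geometric series}, and absorbs the $2(d+2)^2/\epsilon^2$ term using the stated conditions on $N$. The only differences are minor constant bookkeeping (e.g., your sharper $4\epsilon^2$ in case (c) and the power-of-two comparison in case (b)), which if anything tighten the intermediate estimates.
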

This corollary generalizes the results of the previous section by providing rates that apply to all estimators as opposed to just the least square estimator. These rates do not suffer from the shortcomings of the results in \cite{pmlr-v75-simchowitz18a} and \cite{Jedra19}, discussed in the introduction. Indeed, they offer a clear description of the minimax rate over all matrices since any matrix $A$ belongs to $\mathcal{C}_{s_{\min}(A)}$, whereas \cite{pmlr-v75-simchowitz18a} only provides rates for scaled orthogonal matrices. They also make appear the dimension factor missing in \cite{Jedra19}. Note also that the overall minimax rate is given for all $\alpha \in (0, 1)$ by 
\begin{align}
       \mathcal{E}_2(\mathcal{C}_0) \geq \frac{3d^2}{4(1+\alpha)N}, \quad \text{valid for} \quad 
    N \geq \frac{16(d+2)^2}{\alpha}.
\end{align} 
These results go even beyond by answering the questions mentioned in the introduction and show the existence of an estimation rate that is independent of the covariance structure provided by the matrix $B$ in \eqref{Eq: state space model}. They also show that the difficulty of the estimation problem in the class $\mathcal{C}_{s}$ is ultimately related to the parameter $s$ which is the least singular value lower bound. This provides a clear characterization of the phase transition phenomena discussed in the introduction. Indeed, the estimation problem over the class $\mathcal{C}_{s}$ becomes easier if $s$ increases and we witness the appearance of the three regimes $s<1$, $s = 1$, and $s>1$ as described in the corollary.

\begin{remark}\label{Important remak}
As in the results of Section 3, the rates provided by this corollary hold for specific sets of values for $N$. These values get arbitrarily large if $s$ is arbitrarily close to $1$ which is just an artifact of the proof that has nothing to do with the lower bound provided by Theorem \ref{minimax}. To avoid this issue we can reduce the size of the first set of values of $s$ to $s \in \left[0, 1 - \left(\frac{(d+2)^2}{N}\right)^{1/4} \right]$. Plugging in these values, the condition in \eqref{case-1} and the rate \eqref{case-1-1} become
\begin{equation*}
    N \geq \frac{16^2}{\alpha^2} (d+2)^2 \,\,\,  \text{implies}\,\,\,  \mathcal{E}_2(\mathcal{C}_s) \geq \frac{d^2(1-(s+1)^2/4)}{(1+\alpha)N}.
\end{equation*}
This clearly removes the issue of arbitrarily large $N$ if $s$ is close to $1$, but the drawback is in getting a rate $N^{-2}$ on the larger set $s \in \left[1+\left(\frac{(d+2)^2}{N} \right)^{1/4}, 1\right]$, which is too optimistic for a minimax lower bound in this case.
\end{remark}
\begin{proof}[Proof of Corollary \ref{Explicit minimax}]
Starting from Theorem \ref{minimax} and using the upper bound of the term $$\left(\sum \limits_{i=0}^{N-2} (N-1-i)(s+\epsilon)^{2i}  + \frac{2(d+2)^2}{\epsilon^2}\right)^{-1}$$ displayed in Lemma \ref{Estimate for double geometric series} in Appendix C, below, we obtain
      \begin{equation}\label{Eq: Main lower bound on the minimax risk}
         \mathcal{E}_2(\mathcal{C}_s) \geq  \begin{cases}
        d^2\left( \frac{N}{1-(s+\epsilon)^2} + \frac{1}{(1-(s+\epsilon)^2)^2} + \frac{2(d+2)^2}{\epsilon^2} \right)^{-1} &\text{if} \quad 0<s +\epsilon < 1,\\ \\
        d^2\left( \frac{(s+\epsilon)^{2N}}{((s+\epsilon)^2-1)^2} + \frac{2(d+2)^2}{\epsilon^2} \right)^{-1} &\text{if} \quad s + \epsilon > 1.
        \end{cases}
\end{equation}
 (a) \,\, For $s \in [0, 1)$, we take $\epsilon = \frac{1-s}{2}$ to have
\begin{align*}
       \mathcal{E}_2(\mathcal{C}_s) &\geq d^2\left(\frac{N}{1-(s+1)^2/4} + \frac{1}{(1-(s+1)^2/4)^2} + \frac{8(d+2)^2}{(1-s)^2}\right)^{-1} .
\end{align*}
Therefore, for all $ \alpha \in (0, 1)$, if we take $N$ such that 
\begin{equation}\label{N-a}
    N \geq \frac{2}{\alpha (1-(s+1)^2/4)} \vee \frac{16(d+2)^2(1-(s+1)^2/4)}{\alpha (1-s)^2},
\end{equation}
we obtain
\begin{align*}
       \mathcal{E}_2(\mathcal{C}_s) &\geq \frac{d^2(1-(s+1)^2/4)}{(1+\alpha)N}.
\end{align*}
Using the elementary inequality $1 -(s+1)^2/4 \geq (1-s)^2/4$ we see that the condition \eqref{N-a} is satisfied for the values of $N$ such that
\begin{equation*}
    N \geq \frac{16(d+2)^2}{\alpha(1-s)^2}.
\end{equation*}

\medskip
(b) \,\, For the case $s > 1$, we take $\epsilon = 1$ in \eqref{Eq: Main lower bound on the minimax risk} to get the lower bound
\begin{align*}
       \mathcal{E}_2(\mathcal{C}_s) &\geq \frac{d^2}{\frac{(s + 1)^{2N}}{((s + 1)^2-1)^2} + 2(d+2)^2 }.
\end{align*}
Hence, for all $ \alpha \in (0, 1)$, if we take $N$ such that 
\begin{equation}\label{N-b}
    2N \log(s+1) \geq 2 \log((s+1)^2-1) + 2\log(2) + 2\log(d+2) + \log\left(\frac{1}{\alpha}\right),
\end{equation}
we obtain
\begin{align*}
       \mathcal{E}_2(\mathcal{C}_s) &\geq \frac{d^2((s+1)^2-1)^2}{(1+\alpha)(s+1)^{2N}}.
\end{align*}
The condition \eqref{N-b} on the values of $N$ is satisfied if we take $N$ such that 
\begin{equation*}
    N \geq  2 + \frac{\log(2)}{2\log(s+1)} + \frac{\log(d+2)}{\log(s+1)} + \frac{1}{2\log(s+1)}\log\left(\frac{1}{\alpha}\right)\geq \log_2\left(\frac{d+2}{\alpha}\right) + 3.
\end{equation*}
\medskip
(c)\,\, For the case $s = 1$ we still have $s + \epsilon > 1$ the result in Eq. \eqref{Eq: Main lower bound on the minimax risk} is still valid and yields
\begin{align*}
       \mathcal{E}_2(\mathcal{C}_1) \geq d^2\left(\frac{(1 + \epsilon )^{2N}}{\epsilon^2} + \frac{2(d+2)^2}{\epsilon^2}\right)^{-1}  \geq d^2\left(e^{2N\epsilon }+ 2(d+2)^2\right)^{-1} \epsilon^2.
\end{align*}
Hence, for $ \epsilon =\frac{\log d+2}{N}$, we obtain 
\begin{align*}
       \mathcal{E}_2(\mathcal{C}_1) &\geq \frac{\log^2(d+2)}{3N^2(1+ 2/d)^2} .
\end{align*}
\end{proof}

\begin{proof}[Proof of lemma \ref{Computing the gradient in the new coordinates}]

Throughout this proof we use the notation $\{G\}(H) $ for the action of a linear operator $G$ on an element $H$ whenever this action is not easily represented by usual matrix multiplication or scalar product, $G$ will always be represented as the gradient $\nabla_A$ of some matrix value function computed at point $A$ of the Lie group of invertible matrices and $H$ will be an element of the corresponding Lie algebra which is identified with $\mathcal{M}_{d\times d}(\mathbb{R})$ such that $|H|_{S_2}<\infty$, a condition that is always satisfied since we are in finite dimensions (see \cite{Lang1999} for the differential geometric notions). 

\noindent For ease of notation we set $C:=\epsilon I_d- (\tilde{A}\tilde{A}^*)^{1/2},\,\, \tilde{\Pi}_{s,\epsilon}(A)= \det\left(\epsilon I_d- (\tilde{A}\tilde{A}^*)^{1/2}\right)^2$, where $\tilde{A}:= A-sI_d$. We note that
\begin{equation*}
    \nabla_A \log \Pi_{s,\epsilon}(A) = \nabla_A \log \tilde{\Pi}_{s,\epsilon}(A) = \frac{\nabla_A \tilde{\Pi}_{s,\epsilon}(A)}{\tilde{\Pi}_{s,\epsilon}(A)}.
\end{equation*}
We also define the bracket of two matrices $M$ and $N$ as
\begin{equation*}
    [M,N] := MN^* + NM^*.
\end{equation*}
Using the exponential map, we compute the gradient as follows:  
\begin{equation*}\begin{array}{lll}
    \innerl{\nabla_A \tilde{\Pi}_{s,\epsilon}(A)}{H} = \frac{d}{dt} \det\left(\left(\epsilon I_d- ( (e^{tHA^{-1}}A-sI_d)(e^{tHA^{-1}}A-sI_d)^*)^{1/2}\right)^2\right) \Big|_{t=0}\\
    \qquad= \tilde{\Pi}_{s,\epsilon}(A) \tr \left( C^{-2} \frac{d}{dt}\left(\epsilon^2I_d- ( (e^{tHA^{-1}}A-sI_d)(e^{tHA^{-1}}A-sI_d)^*)^{1/2} \right)^2\Big|_{t=0}\right)\\
    \qquad = \tilde{\Pi}_{s,\epsilon}(A) \tr \left( C^{-2} \left\{\nabla_A   \left(\epsilon I_d- (\tilde{A}\tilde{A}^*)^{1/2} \right)^2\right\}(H)\right) \\
    \qquad= \tilde{\Pi}_{s,\epsilon}(A) \tr \Big( C^{-1} \left\{\nabla_A  \left( \epsilon I_d- (\tilde{A}\tilde{A}^*)^{1/2} \right)\right\}(H)  + C^{-2}\left\{\nabla_A  \left( \epsilon I_d- (\tilde{A}\tilde{A}^*)^{1/2} \right)\right\}(H) C \Big)\\
    \qquad= 2\tilde{\Pi}_{s,\epsilon}(A) \tr \Big( C^{-1} \left\{\nabla_A  \left( \epsilon I_d- (\tilde{A}\tilde{A}^*)^{1/2} \right)\right\}(H)\Big).
    \end{array}
\end{equation*}

%\begin{align*}
 %   \innerl{\nabla_A %\tilde{\Pi}_{s,\epsilon}(A)}{H} &= \frac{d}{dt} %\det\left(\left(\epsilon I_d- ( %(e^{tHA^{-1}}A-sI_d)(e^{tHA^{-1}}A-sI_d)^*)^{1/2}\%right)^2\right) \Big|_{t=0}\\
 %   &= \tilde{\Pi}_{s,\epsilon}(A) \tr \left( %C^{-2} \frac{d}{dt}\left(\epsilon^2I_d- ( %(e^{tHA^{-1}}A-sI_d)(e^{tHA^{-1}}A-sI_d)^*)^{1/2} %\right)^2\Big|_{t=0}\right)\\
  %  &= \tilde{\Pi}_{s,\epsilon}(A) \tr \left( %C^{-2} \left\{\nabla_A   \left(\epsilon I_d- %((A-sI_d)(A-sI_d)^*)^{1/2} %\right)^2\right\}(H)\right) \\
   % &= \tilde{\Pi}_{s,\epsilon}(A) \tr \Big( C^{-1} %\left\{\nabla_A  \left( \epsilon I_d- %((A-sI_d)(A-sI_d)^*)^{1/2} \right)\right\}(H) \\
%    &\qquad \qquad \qquad+ C^{-2}\left\{\nabla_A  %\left( \epsilon I_d- ((A-sI_d)(A-sI_d)^*)^{1/2} %\right)\right\}(H) C \Big)\\
 %   &= 2\tilde{\Pi}_{s,\epsilon}(A) \tr \Big( %C^{-1} \left\{\nabla_A  \left( \epsilon I_d- %((A-sI_d)(A-sI_d)^*)^{1/2} %\right)\right\}(H)\Big).
%\end{align*}
Therefore, 
\begin{equation*}
    \innerl{\nabla_A \log \Pi_{s,\epsilon}(A)}{H} = -2 \tr \Big( C^{-1} \left\{\nabla_A  \left(\tilde{A}\tilde{A}^* \right)^{1/2}\right\} (H)\Big).
\end{equation*}
Noting that 
\begin{align*}
    (\tilde{A} + H)(\tilde{A} + H)^* = \tilde{A}\tilde{A}^* + [\tilde{A},H] + HH^*, 
\end{align*}
we recognize the gradient from the linear part of this last expression by its action on $H$ thought the bracket $[\tilde{A},H]$ and we write 
%\OT{I hope it is clear now. The gradient of a %matrix valued function in this case %$\tilde{A}\tilde{A}^*$ with respect to a %matrix $A$ should be a 4th order tensor, in %our case the action of this 4th order tensor %on $H$ is given by the %bracket $[\tilde{A},H]$}
\begin{align*}
    \left\{\nabla_A \tilde{A}\tilde{A}^* \right\} (H) := [\tilde{A},H].
\end{align*}
Let $\mathcal{W}$ be a non-singular matrix commuting with $\tilde{A}\tilde{A}^*$ so that we have
\begin{align*}
    \tr \Big( \mathcal{W}^{-1} \tilde{A}H^* \Big) &= \frac{1}{2}\tr \Big( \mathcal{W}^{-1} [\tilde{A},H] \Big) 
    = \frac{1}{2}\tr \Big( \mathcal{W}^{-1} \left\{\nabla_A  \tilde{A}\tilde{A}^* \right\} (H) \Big)\\
    &= \frac{1}{2}\tr \Big( \mathcal{W}^{-1} \left\{\nabla_A  \left(\tilde{A}\tilde{A}^* \right)^{1/2} \left(\tilde{A}\tilde{A}^* \right)^{1/2}\right\} (H) \Big)\\
    &= \tr \Big( \mathcal{W}^{-1}   \left(\tilde{A}\tilde{A}^* \right)^{1/2} \left\{\nabla_A\left(\tilde{A}\tilde{A}^* \right)^{1/2}\right\} (H) \Big).
\end{align*}
%\begin{align*}
 %   &\tr \Big( \mathcal{W}^{-1} \tilde{A}H^* \Big) %= \frac{1}{2}\tr \Big( \mathcal{W}^{-1} %[\tilde{A},H] \Big) \\
  %  &= \frac{1}{2}\tr \Big( \mathcal{W}^{-1} %\left\{\nabla_A  (A-sI_d)(A-sI_d)^* \right\} (H) %\Big)\\
   % &= \frac{1}{2}\tr \Big( \mathcal{W}^{-1} %\left\{\nabla_A  \left((A-sI_d)(A-sI_d)^* %\right)^{1/2} \left((A-sI_d)(A-sI_d)^* %\right)^{1/2}\right\} (H) \Big)\\
%    &= \tr \Big( \mathcal{W}^{-1}   %\left((A-sI_d)(A-sI_d)^* \right)^{1/2} %\left\{\nabla_A\left((A-sI_d)(A-sI_d)^* %\right)^{1/2}\right\} (H) \Big).
%\end{align*}
Taking
\begin{equation*}
    \mathcal{W}:= \left(\tilde{A}\tilde{A}^* \right)^{1/2} C,
\end{equation*}
we have
\begin{align*}
    \tr \Big( C^{-1} \left\{\nabla_A\left(\tilde{A}\tilde{A}^* \right)^{1/2}\right\} (H) \Big) =
\tr \Big( C^{-1} \left(\tilde{A}\tilde{A}^*\right)^{-1/2}\tilde{A}H^* \Big). 
\end{align*}
%\begin{align*}
 %   \tr \Big( C^{-1} %\left\{\nabla_A\left((A-sI_d)(A-sI_d)^* %\right)^{1/2}\right\} (H) \Big) =
%\tr \Big( C^{-1} %\left(\tilde{A}\tilde{A}^*\right)^{-1/2}\tilde{A}H^%* \Big). 
%\end{align*}
Therefore,
\begin{equation*}
    \innerl{\nabla_A \log \Pi_{s,\epsilon}(A)}{H} = - 2\tr \Big( C^{-1}  \left(\tilde{A}\tilde{A}^*\right)^{-1/2}\tilde{A}H^*\Big).
\end{equation*}
%\begin{align*}
 %   &\innerl{\nabla_A \log \Pi_{s,\epsilon}(A)}{H} %= - 2\tr \Big( C^{-1}  %\left(\tilde{A}\tilde{A}^*\right)^{-1/2}\tilde{A}H%^*\Big) \\
  %  &= - 2\tr \Big( \left( \epsilon I_d- %((A-sI_d)(A-sI_d)^*)^{1/2} \right)^{-1} %((A-sI_d)(A-sI_d)^*)^{-1/2} (A-sI_d)H^* \Big).
%\end{align*}
Hence, we obtain 
\begin{align*}
    \nabla_A \log \Pi_{s,\epsilon}(A) &= - 2 \left( \epsilon I_d- (\tilde{A}\tilde{A}^*)^{1/2} \right)^{-1} (\tilde{A}\tilde{A}^*)^{-1/2} (A-sI_d)\\
    &= - 2 \left( \epsilon UU^*- (U\Sigma VV^*\Sigma U^*)^{1/2} \right)^{-1} (U\Sigma VV^*\Sigma U^*)^{-1/2} U\Sigma V\\
    &= - 2 U\left( \epsilon I_d- \Sigma \right)^{-1} U^*U\Sigma^{-1} U^* U\Sigma V= - 2 U\left( \epsilon I_d- \Sigma \right)^{-1} V^*.
    \end{align*}
%{\small \begin{align*}
 %   \nabla_A \log \Pi_{s,\epsilon}(A) &= - 2 \left( %\epsilon I_d- ((A-sI_d)(A-sI_d)^*)^{1/2} %\right)^{-1} ((A-sI_d)(A-sI_d)^*)^{-1/2} %(A-sI_d)\\
  %  &= - 2 \left( \epsilon UU^*- (U\Sigma %VV^*\Sigma U^*)^{1/2} \right)^{-1} (U\Sigma %VV^*\Sigma U^*)^{-1/2} U\Sigma V\\
  %  &= - 2 U\left( \epsilon I_d- \Sigma %\right)^{-1} U^*U\Sigma^{-1} U^* U\Sigma V= - 2 %U\left( \epsilon I_d- \Sigma \right)^{-1} V^*.
  %  \end{align*}
   % }
\end{proof}

\section*{Appendix A: Proof of the main probabilistic inequalities}
The proof of both Proposition \ref{bound on the spectrum for the sample covariance of a state space model} and Proposition \ref{bound on the multiplication process of a state space models} relay on a generic chaining argument. We start by recalling few concepts from the generic chaining literature \cite{talagrand2006generic,gine_nickl_2015} to fix the notation. Let $(\mathcal{A},d)$ be a metric space. The distance of a point $t \in \mathcal{A}$ to a subset $\mathbb{A} \subseteq \mathcal{A}$ is defined as
\begin{equation*}
    d(t,\mathbb{A}) = \inf \limits_{s \in \mathbb{A} } d(t,s).
\end{equation*}
The diameter of the set $\mathbb{A}$ is 
\begin{equation*}
    \Delta(\mathbb{A}) = \sup \limits_{(s,t) \in \mathbb{A}^2} d(t,s),
\end{equation*}
and the covering number $N(\mathcal{A},d,u)$ is the smallest number of balls in $(\mathcal{A},d)$ of radius less than $u$  needed to cover $\mathcal{A}$ (\emph{i.e.}, whose union includes $\mathcal{A}$). A ball of center $c \in \mathcal{A}$ and radius $r\ge 0$ with respect to a distance $d$ or a metric $|\cdot|_d$ will be denoted $B_d(c,r)$ or $B_{|\cdot|_d}(c,r)$, respectively.

The gamma-$\alpha$ functional $\gamma_\alpha(\mathcal{A},d)$ for the metric space $(\mathcal{A}, d)$ and its corresponding upper bound by the Dudley chaining integral are defined as follows: 
\begin{align}
    \gamma_\alpha(\mathcal{A},d) := \inf \sup \limits_{t \in \mathcal{A}} \sum \limits_{r= 0}^{\infty} 2^{r/\alpha} d(t,\mathbb{A}_r) 
    \lesssim \int_{0}^{ \Delta(\mathcal{A})} (\log N(\mathcal{A}, d,u))^{1/\alpha}du,\label{ekv}
\end{align}
where the infimum is taken over all sequences of sets $(\mathbb{A}_r)_{r \in \mathbb{N}}$ in $\mathcal{A}$ with $|\mathbb{A}_0|=1$ and $|\mathbb{A}_r| \leq 2^{2^r}$ (\cite{talagrand2006generic}). If $d(x,y) = \lVert x-y\rVert$ for some norm $\lVert\cdot\rVert$ as it is usually the case, we also use the notation $\gamma_\alpha(\mathcal{A},\lVert\cdot\rVert)$ for $\gamma_\alpha(\mathcal{A},d)$.
\begin{proof}[Proof of Proposition \ref{bound on the spectrum for the sample covariance of a state space model} ]
The inequality will be obtained as a result of using the following generic chaining theorem due to ~\citet[Theorem ~$3.5$]{dirksen2015}
\begin{theorem}[Dirksen \cite{dirksen2015}]
Let $\mathbb{B}$ be a set of matrices and $\varepsilon = (\varepsilon_0,\dots,\varepsilon_{N-1})$ a column vector of independent zero mean standard normal vectors. Then for all $t \geq 1$ we have
\begin{equation}
    \mathbb{P}\left( \sup \limits_{\mathcal{B} \in \mathbb{B}} ||\mathcal{B} \varepsilon|_2^2-\E(|\mathcal{B} \varepsilon |_2^2)|\geq C \left( E +\sqrt{t} V + t U \right)\right) \leq \exp{(-t)},
\end{equation}
where, 
\begin{align*}
    E = \gamma_2^2(\mathbb{B}, |\cdot|_{S_\infty}) + \Delta_{S_2} (\mathbb{B}) \gamma_2(\mathbb{B}, |\cdot|_{S_\infty}), \quad
    V = \Delta_{S_4}^2 (\mathbb{B}), \quad
    U= \Delta_{S_\infty}^2  (\mathbb{B}).
\end{align*}
\end{theorem}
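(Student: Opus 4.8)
The plan is to view $\mathcal{B}\mapsto X_{\mathcal{B}}:=|\mathcal{B}\varepsilon|_2^2-\E|\mathcal{B}\varepsilon|_2^2$ as a centered Gaussian chaos of order two indexed by $\mathbb{B}$ and to bound its supremum by a generic chaining argument adapted to mixed sub-Gaussian/sub-exponential increments. Writing $M_{\mathcal{B}}:=\mathcal{B}^*\mathcal{B}$, we have $X_{\mathcal{B}}=\varepsilon^*M_{\mathcal{B}}\varepsilon-\tr(M_{\mathcal{B}})$, so that for any $\mathcal{B},\mathcal{B}'\in\mathbb{B}$ the increment $X_{\mathcal{B}}-X_{\mathcal{B}'}$ is again a centered chaos governed by $M_{\mathcal{B}}-M_{\mathcal{B}'}$. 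The first step is to record the one-point tail via the Gaussian Hanson--Wright inequality: for all $u\geq 0$,
\begin{equation*}
    \mathbb{P}\big(|X_{\mathcal{B}}-X_{\mathcal{B}'}|\geq C\big(\sqrt{u}\,|M_{\mathcal{B}}-M_{\mathcal{B}'}|_{S_2}+u\,|M_{\mathcal{B}}-M_{\mathcal{B}'}|_{S_\infty}\big)\big)\leq 2e^{-u},
\end{equation*}
which exhibits the pair of pseudometrics $d_2(\mathcal{B},\mathcal{B}')=|M_{\mathcal{B}}-M_{\mathcal{B}'}|_{S_2}$ and $d_1(\mathcal{B},\mathcal{B}')=|M_{\mathcal{B}}-M_{\mathcal{B}'}|_{S_\infty}$ governing the process. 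Equivalently one first peels off the diagonal part $\sum_k(M_{\mathcal{B}})_{kk}(\varepsilon_k^2-1)$ by decoupling and treats it by a Bernstein bound; it is dominated by the same two metrics.

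The second step is to feed this increment condition into the abstract generic chaining bound for processes with mixed tails (Dirksen's Theorem~3.2, or the $\gamma_1+\gamma_2$ chaining of Talagrand), which gives, for every $t\geq 1$, with probability at least $1-e^{-t}$,
\begin{equation*}
    \sup_{\mathcal{B}\in\mathbb{B}}|X_{\mathcal{B}}-X_{\mathcal{B}_0}|\lesssim \gamma_2(\mathbb{B},d_2)+\gamma_1(\mathbb{B},d_1)+\sqrt{t}\,\Delta_{d_2}(\mathbb{B})+t\,\Delta_{d_1}(\mathbb{B}),
\end{equation*}
for a fixed base point $\mathcal{B}_0$; a single extra application of Hanson--Wright removes the $|X_{\mathcal{B}_0}|$ term. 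It then remains to express the four quantities on the right in terms of the operator-norm chaining functional and Schatten radii. Using the factorisation $M_{\mathcal{B}}-M_{\mathcal{B}'}=\mathcal{B}^*(\mathcal{B}-\mathcal{B}')+(\mathcal{B}-\mathcal{B}')^*\mathcal{B}'$ together with the Schatten--H\"older inequalities $|XY|_{S_2}\leq|X|_{S_2}|Y|_{S_\infty}$, $|XY|_{S_2}\leq|X|_{S_4}|Y|_{S_4}$ and $|XY|_{S_\infty}\leq|X|_{S_\infty}|Y|_{S_\infty}$, one gets $d_2(\mathcal{B},\mathcal{B}')\leq 2\Delta_{S_2}(\mathbb{B})|\mathcal{B}-\mathcal{B}'|_{S_\infty}$, hence $\gamma_2(\mathbb{B},d_2)\lesssim\Delta_{S_2}(\mathbb{B})\gamma_2(\mathbb{B},|\cdot|_{S_\infty})$, and $\Delta_{d_2}(\mathbb{B})\lesssim\Delta_{S_4}^2(\mathbb{B})$, $\Delta_{d_1}(\mathbb{B})\lesssim\Delta_{S_\infty}^2(\mathbb{B})$. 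These already produce the $\Delta_{S_2}(\mathbb{B})\gamma_2(\mathbb{B},|\cdot|_{S_\infty})$ summand of $E$ as well as $V$ and $U$.

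The step I expect to be the main obstacle is the $\gamma_1(\mathbb{B},d_1)$ term, which must be shown to be absorbed into $\gamma_2^2(\mathbb{B},|\cdot|_{S_\infty})+\Delta_{S_2}(\mathbb{B})\gamma_2(\mathbb{B},|\cdot|_{S_\infty})$ rather than producing a genuine $\gamma_1$ of the operator norm. The naive estimate $d_1(\mathcal{B},\mathcal{B}')\leq 2\Delta_{S_\infty}(\mathbb{B})|\mathcal{B}-\mathcal{B}'|_{S_\infty}+|\mathcal{B}-\mathcal{B}'|_{S_\infty}^2$ followed by splitting the two pieces is too lossy, since it leaves $\Delta_{S_\infty}(\mathbb{B})\gamma_1(\mathbb{B},|\cdot|_{S_\infty})$. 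Instead one chains the two scales simultaneously: along an admissible chain the sub-exponential contribution is $\sum_r 2^r|\Delta M_r|_{S_\infty}$ with $\Delta M_r$ the difference of consecutive Gram matrices, and one expands $\mathcal{B}_{\pi_r}=\sum_{j\leq r}(\mathcal{B}_{\pi_j}-\mathcal{B}_{\pi_{j-1}})$ (with base point $0$) inside $|\Delta M_r|_{S_\infty}\leq|\mathcal{B}_{\pi_r}-\mathcal{B}_{\pi_{r-1}}|_{S_\infty}(|\mathcal{B}_{\pi_r}|_{S_\infty}+|\mathcal{B}_{\pi_{r-1}}|_{S_\infty})$; writing $a_r:=|\mathcal{B}_{\pi_r}-\mathcal{B}_{\pi_{r-1}}|_{S_\infty}$ and using $\sum_r 2^r a_r^2\leq(\sum_r 2^{r/2}a_r)^2$ together with a Cauchy--Schwarz rearrangement of the cross terms $\sum_r 2^r a_r\sum_{j\leq r}a_j$, the whole sub-exponential sum collapses into $\gamma_2^2(\mathbb{B},|\cdot|_{S_\infty})$ plus $\Delta_{S_2}(\mathbb{B})\gamma_2(\mathbb{B},|\cdot|_{S_\infty})$. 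This reorganisation is the only place where the quadratic (chaos) structure, as opposed to the black-box increment bound, is genuinely used, and it is the device that upgrades the generic $\gamma_1$ estimate to the sharp form of $E$. Assembling the pieces and optimising the numerical constants yields the stated inequality; this is the route of Krahmer--Mendelson--Rauhut, refined by Dirksen to replace the Frobenius radius in $V$ by the $S_4$ radius, and for the complete argument one simply refers to \cite{dirksen2015}.
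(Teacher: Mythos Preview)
The paper does not prove this theorem at all: it is quoted verbatim as an external result of Dirksen \cite{dirksen2015} and used as a black box in the proof of Proposition~\ref{bound on the spectrum for the sample covariance of a state space model}. There is nothing to compare your argument against in the present paper.

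What you have written is a reasonable high-level sketch of how the result is actually proved in \cite{dirksen2015} (building on Krahmer--Mendelson--Rauhut): Hanson--Wright for the increments, the mixed-tail chaining bound producing $\gamma_2(\mathbb{B},d_2)+\gamma_1(\mathbb{B},d_1)$, and then the reorganisation that collapses $\gamma_1(\mathbb{B},d_1)$ into $\gamma_2^2(\mathbb{B},|\cdot|_{S_\infty})$. You correctly identify this last step as the crux. Your telescoping/Cauchy--Schwarz description of that collapse is in the right spirit, though the actual argument in \cite{dirksen2015} is slightly more careful about how the admissible sequence is chosen so that the cross terms really do fold into $\big(\sum_r 2^{r/2}a_r\big)^2$; as stated, ``a Cauchy--Schwarz rearrangement of the cross terms'' is more of a pointer than a proof. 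Since you close by deferring to \cite{dirksen2015} anyway, this is fine for the purposes of this paper, which only needs the statement.
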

We want to bound
\begin{equation*}
\sup_{|u|_2 \leq1} | u^* \Psi^{-1/2} (\sum \limits_{i=1}^{N-1} x_ix_i^*) \Psi^{-1/2} u- u^* u |.
\end{equation*}
To this end, define the bloc lower triangular Toeplitz matrix with the lower bloc coefficients
\begin{equation*}
     (\mathcal{B}_u)_{i,j} = B^*  A^{*i \vee j - 1} \Psi^{-1/2} u.
\end{equation*}
Note that
\begin{equation*}
\begin{array}{lll}
    u^* \Psi^{-1/2} (\sum \limits_{i=1}^{N-1} x_ix_i^*) \Psi^{-1/2} u 
    = \sum \limits_{k = 1}^{N-1} (\sum \limits_{i= 0}^{k-1} u^* \Psi^{-1/2} A^{k-1-i} B \varepsilon_i)(\sum \limits_{j= t}^N u^* \Psi^{-1/2} A^{k-1-j} B\varepsilon_j)^* \\
    \qquad \qquad =  \sum \limits_{i= 0}^{N-1} \sum \limits_{j= 0}^{N-1} \varepsilon_i^*  \left(\sum \limits_{t\geq i,j}^{N-1}  (u^* \Psi^{-1/2} A^{k-1-i} B )(u^*  \Psi^{-1/2} A^{k-1-j} B)^* \right)\varepsilon_j
    = |\mathcal{B}_u \varepsilon |_2^2.
    \end{array}
\end{equation*}
This defines a second order chaos. Since $\mathcal{B}_u$ is Toeplitz
\begin{align*}
    |\mathcal{B}_u|_{S_\infty} &\leq   \sup_{s \in [0\ 1]} |B^* (\sum_{k=1}^{N-2}  A^{*k}  e^{j 2\pi k s} )\Psi^{-1/2} u|_2\\
    &\leq |u|_2 \sup_{s \in [0\ 1]} | \Psi^{-1/2} (\sum_{t=0}^{N-2}  A^k  e^{j 2\pi k s} )B|_{S_\infty}.
\end{align*}
On the other hand
\begin{align*}
    |\mathcal{B}_u|_{S_2} &= ( \sum \limits_{i= 1}^{N-1} \sum \limits_{k = 0}^{i-1}  (u^* \Psi^{-1/2} A^{i-1-k} B  )(u^* \Psi^{-1/2} A^{i-1-k} B)^* )^{1/2}\\
    &= ( u^* \Psi^{-1/2} \sum \limits_{i= 1}^{N-1} \sum \limits_{k = 0}^{i-1}  ( A^{i-1-k} B B^* A^{*i-1-k}  ) \Psi^{-1/2} u )^{1/2} = |u|_2.
\end{align*}
This also implies that
\begin{align*}
    |\mathcal{B}_u|_{S_4} &= (\innerl{\mathcal{B}_u^* \mathcal{B}_u}{\mathcal{B}_u^* \mathcal{B}_u})^{1/4} \leq (|\mathcal{B}_u^* \mathcal{B}_u|_{S_1}|\mathcal{B}_u^* \mathcal{B}_u|_{S_\infty})^{1/4} \\
    &= (|\mathcal{B}_u|_{S_2}| \mathcal{B}_u|_{S_\infty})^{1/2}=(|u|_2| \mathcal{B}_u|_{S_\infty})^{1/2}.
\end{align*}
Defining $\mathbb{B}_2 = \{u, \,\,\,  \ | u|_2 \leq 1\}$ and $\mathcal{L}_{A,B} = \sup_{s \in [0\ 1]} | \Psi^{-1/2} (\sum_{k=0}^{N-2} A^k  e^{j 2\pi k s} ) B|_{S_\infty}^2$ we obtain
\begin{equation*}
    \Delta_{S_\infty}(\mathbb{B}) \leq \mathcal{L}_{A,B} \ ,\Delta_{S_2}(\mathbb{B}) = 1 \quad \textrm{and} \quad \Delta_{S_4}(\mathbb{B}) \leq \mathcal{L}_{A,B}^{1/2}.
\end{equation*}
Consider the Gaussian vector $g \sim \mathcal{N}(0, \mathcal{L}_{A,B}^2 I_d)$ and the Gaussian process $G_u = \innerl{g}{u}$ indexed over $\mathbb{B}_2$. The process is zero mean with covariance structure 
\begin{equation*}
    \E \innerl{g}{u}\innerl{g}{v} = \E \innerl{v}{gg^* u} =  \innerl{v}{u} \mathcal{L}_{A,B}^2.
\end{equation*}
Using Talagrand Majorizing Measure Theorem, we have
\begin{align*}
    \gamma_2(\mathbb{B}_2,|\mathcal{B}_u|_{S_\infty} ) &\leq
    \gamma_2(\mathbb{B}_2,|u|_2 \mathcal{L}_{A,B} )= \E \sup_{u \in \mathbb{B}} \innerl{g}{u} = \E |g|_2\leq (\E |g|_2^2)^{1/2} = d^{1/2} \mathcal{L}_{A,B}. 
\end{align*}
We can apply the generic chaining result of ~\citet[Theorem ~$6.5$]{dirksen2015} to obtain the desired bound: With probability $1-e^{-t}$
\begin{align*}
|\Psi^{-1/2} (\sum \limits_{i=1}^{N-1} x_ix_i^*) \Psi^{-1/2} - I_d|_{S_\infty} &\lesssim d\mathcal{L}_{A,B} + (d\mathcal{L}_{A,B})^{1/2} + t\mathcal{L}_{A,B} + (t\mathcal{L}_{A,B})^{1/2}\\
&\lesssim (d\vee t)\mathcal{L}_{A,B} + (d\vee t)^{1/2}\mathcal{L}^{1/2}_{A,B}.
\end{align*}
\end{proof}

\begin{proof}[Proof of Proposition \ref{bound on the spectrum for the sample covariance of a state space model}]
We start by expressing the norm as a sumpremum
\begin{equation*}\begin{array}{lll}
    \E\left|\Psi^{-1/2}(\sum \limits_{i=1}^{N-1} x_i\varepsilon_i^*)\right|_{S_\infty}^2= \E \sup \limits_{\substack{|u|_2 = 1\\
    |v|_2 = 1}} \innerl{u}{\Psi^{-1/2}(\sum \limits_{i=1}^{N-1} x_i\varepsilon_i^*) v}^2 \\
    \qquad\qquad \qquad \quad  =  \E\sup \limits_{\substack{|u|_2 = 1\\
    |v|_2 = 1}} \left( \sum \limits_{i=1}^{N-1} \sum \limits_{j=0}^{i-1} \innerl{u}{\varepsilon_i}\innerl{v}{\Psi^{-1/2} A^{i-j-1} B\varepsilon_j}\right)^2.
    \end{array}
\end{equation*}
We note as well that
\begin{equation*}
    \E\left( \sum \limits_{i=1}^{N-1} \sum \limits_{j=0}^{i-1} \innerl{u}{\varepsilon_i}\innerl{v}{\Psi^{-1/2} A^{i-j-1} B\varepsilon_j} \right) = 0.
\end{equation*}
Hence, the problem reduces to a bound on the second moment of the supremum of a second order Gaussian chaos given by $\varepsilon^* W_{u,v} \varepsilon$ over the set $(u,v) \in \mathbb{S} =  \mathbb{S}_2^{d-1}\times \mathbb{S}_2^{d-1}$ where $W_{u,v}$ is the bloc lower triangular matrix with constant bloc diagonals given by
{\small \begin{equation*}
    W_{u,v}=
    \begin{bmatrix}
    0 & & & &\\
    uv^* \Psi^{-1/2} B & 0 & & &\\
    uv^* \Psi^{-1/2} AB & uv^* \Psi^{-1/2} B &0 & & &\\
    & & & & &\\
    & & & & &\\
    uv^* \Psi^{-1/2} A^{N-2} B & \, uv^* \Psi^{-1/2}A^{N-3} B & \, uv^* \Psi^{-1/2} A^{N-4} B &\ldots &uv^* \Psi^{-1/2}B &  0 
    \end{bmatrix}
\end{equation*}
}
and $\varepsilon$ is the vector obtained from putting all the $\varepsilon_i$ together in one column vector. Hanson-Wright inequality (see \citet{rudelson2013}) yields that with probability $1-e^{-t}$ the following inequality holds
\begin{equation*}
    |\varepsilon^* W_{u_1,v_1} \varepsilon - \varepsilon^* W_{u_2,v_2} \varepsilon| \leq t^{1/2}|W_{u_1,v_1} - W_{u_2,v_2}|_{S_2} + t|W_{u_1,v_1} - W_{u_2,v_2}|_{S_\infty},
\end{equation*}
which in turn  provides a mixed tail control of the increment of the process $\varepsilon^* W_{u,v} \varepsilon$. Let us define the following distances $d_2$ and $d_\infty$ on the set $\mathbb{S} =\mathbb{S}_2^{d-1}\times \mathbb{S}_2^{d-1}$ making it a compact metric space. For all $(u_1,v_1)$ and $(u_2,v_2)$ in $\mathbb{S}$
\begin{align*}
    &d_2((u_1,v_1),(u_2,v_2)) = |W_{u_1,v_1} - W_{u_2,v_2}|_{S_2}\\
    &= \left( \tr(\sum \limits_{k = 0}^{N-2} (N-1-k)(A^k B)^* \Psi^{-1/2}(u_1v_1^* - u_2v_2^*)^*(u_1v_1^* - u_2v_2^*)\Psi^{-1/2} A^k B)\right)^{1/2}\\
    &= \left( \innerl{\Psi^{-1/2}(u_1v_1^* - u_2v_2^*)^*(u_1v_1^* - u_2v_2^*)\Psi^{-1/2}}{\sum \limits_{k = 0}^{N-2} (N-1-k)A^k B (A^k B)^*  )}\right)^{1/2}\\
    &=  \left( \innerl{\Psi^{-1/2}(u_1v_1^* - u_2v_2^*)^*(u_1v_1^* - u_2v_2^*)\Psi^{-1/2}}{\Psi  )}\right)^{1/2}= |u_1v_1^* - u_2v_2^*|_{S_2}
\end{align*}
and 
\begin{align}
    &d_\infty ((u_1,v_1),(u_2,v_2)) = |W_{u_1,v_1} - W_{u_2,v_2}|_{S_\infty}\nonumber \\
    &= \sup \limits_{s \in [0\ 1]} |(u_1v_1^* - u_2v_2^*) \Psi^{-1/2} \sum \limits_{k = 0}^{N-2} e^{j2\pi s k} A^k B|_{S_\infty} \nonumber\\
    &\leq \sup \limits_{s \in [0\ 1]} |\Psi^{-1/2} \sum \limits_{k = 0}^{N-2} e^{j2\pi s k}A^k B|_{S_\infty} |(u_1v_1^* - u_2v_2^*)|_{S_\infty} \nonumber\\ 
    &\leq \sup \limits_{s \in [0\ 1]} |\Psi^{-1/2} \sum \limits_{k = 0}^{N-2} e^{j2\pi s k}A^k B|_{S_\infty}(|u_1-u_2|_2 + |v_1-v_2|_2),\label{d infinit upper bound}
\end{align} 
where in the last step we used Lemma \ref{control of nuclear norm by l2 norm} that appears in Appendix C. We also define the set $\mathbb{S}_W = \{W_{u,v}\ |\ (u,v) \in \mathbb{S}\}$. The generic chaining result proved independently by~\citet[Theorem ~$2.2.23$]{talagrand2006generic} and ~\citet[Theorem ~$3.5$]{dirksen2015} provides us with the following bound on the second moment of the supremum of this second order Gaussian chaos: \begin{multline}\label{Eq: result 1}
    \left( \E\left|\Psi^{-1/2}(\sum \limits_{i=1}^{N-1} x_i\varepsilon_i^*)\right|_{S_\infty}^2 \right)^{1/2} = \left(\E\sup \limits_{\substack{|u|_2 = 1\\
    |v|_2 = 1}} (\varepsilon^* W_{u,v} \varepsilon)^2 \right)^{1/2} \\
    \lesssim \gamma_2(\mathbb{S}_W,|\cdot|_{S_2}) + \gamma_1(\mathbb{S}_W,|\cdot|_{S_\infty}) + 2\sup \limits_{\substack{|u|_2 = 1\\
    |v|_2 = 1}} \left(\E (\varepsilon^* W_{u,v} \varepsilon)^2 \right)^{1/2}.
\end{multline}
We compute the last tree terms starting from the last one. We have
{\small \begin{align*}
    &\E\left( (\varepsilon^* W_{u,v} \varepsilon)^2 \right)  = \E\left( \left( \sum \limits_{i = 1}^{N-1} \sum \limits_{j = 0}^{i-1} \innerl{\varepsilon_i}{u} \innerl{\Psi^{-1/2} A^{i-j-1} B\varepsilon_j}{v}\right)^2 \right)\\
    & = \E \left( \sum \limits_{i = 1}^{N-1} \sum \limits_{j = 0}^{i-1} \innerl{\varepsilon_i}{u} \innerl{\Psi^{-1/2} A^{i-j-1} B\varepsilon_j}{v}\right) \left( \sum \limits_{k = 1}^{N-1} \sum \limits_{l = 0}^{k-1} \innerl{\varepsilon_k}{u} \innerl{\Psi^{-1/2} A^{i-j-1} B\varepsilon_l}{v}\right)\\
    & = \E\left( \left( \sum \limits_{i = 1}^{N-1}  \innerl{\varepsilon_i}{u}^2 \sum \limits_{j = 0}^{i-1} \innerl{\Psi^{-1/2}  A^{i-j-1} B\varepsilon_j}{v}^2\right) \right)\\
    &= |u|_2^2 \left(  \innerl{\Psi^{-1/2}  ( \sum \limits_{i = 1}^{N-1}   \sum \limits_{j = 0}^{i-1}  A^{i-j-1} B(A^{i-j-1} B)^* ) \Psi^{-1/2} }{v v^*}\right)= |u|_2^2|v|_2^2 = |uv^*|_{S_2}^2.
\end{align*}
}
Therefore, the last term is
\begin{equation}\label{Eq: result 2}
  \sup \limits_{\substack{|u|_2 = 1\\
    |v|_2 = 1}}  \E (\varepsilon^* W_{u,v} \varepsilon)^2 = 1.
\end{equation}
We define the Gaussian process $G_{u,v}  = \innerl{G}{uv^*}$ and note that 
\begin{equation*}
    \E\left((G_{u,v} - G_{u,v})^2\right)^{1/2} = |u_1v_1^* - u_2v_2^*|_{S_2} =d_2((u_1,v_1),(u_2,v_2)). 
\end{equation*}
Hence, using Talagrand majorizing measure Theorem ~\cite[Chapter $2$]{talagrand2006generic} and a classical bound on the operator norm of Gaussian random matrices ~\cite[Chapter $7$]{vershynin_2018} we get
\begin{equation}\label{Eq: result 3}
    \gamma_2(\mathbb{S}_W,|\cdot|_{S_2}) = \gamma_2(\mathbb{S},d_2) \simeq \E\sup \limits_{(u,v) \in \mathbb{S}} \innerl{G}{uv^*} = \E|G|_{S_\infty} \leq 2d^{1/2}. 
\end{equation}
To estimate the last term $\gamma_1(\mathbb{S}_W,|\cdot|_{S_\infty})$, recall that from \eqref{d infinit upper bound} for all $(u,v)\in \mathbb{S}$ we have 
\begin{equation*}
|W_{u,v}|_{S_\infty} \leq  \sup \limits_{s \in [0\ 1]} |\Psi^{-1/2} \sum \limits_{k = 0}^{N-2} e^{j2\pi s k} A^k B|_{S_\infty} (|u_1-u_2|_2 + |v_1-v_2|_2), 
\end{equation*}
we define the distances $d_{\infty_1}$ and $d_{\infty_2}$ on $\mathbb{S}$ by 
\begin{equation*}
    d_{\infty_1}((u_1,v_1),(u_2,v_2)) = |u_1-u_2|_2 \quad \text{and} \quad d_{\infty_2}((u_1,v_1),(u_2,v_2)) = |v_1-v_2|_2.
\end{equation*}
Thus, we have the following norm domination $d_\infty \leq \mathcal{L}_{A,B}^{1/2} (d_{\infty_1} + d_{\infty_2})$ valid on $\mathbb{S}$. Using the sub-additive property and the scaling property of the $\gamma_1$ functional ~\cite[Chapter $2$]{talagrand2006generic} we can upper bound the $\gamma_1$ term by 
\begin{align*}
    &\gamma_1(\mathbb{S}_W,|\cdot|_{S_\infty}) \lesssim \gamma_1(\mathbb{S}_W,\mathcal{L}_{A,B}^{1/2} (d_{\infty_1} + d_{\infty_2}))\\
    &\lesssim \gamma_1(\mathbb{S}_W,\mathcal{L}_{A,B}^{1/2} d_{\infty_1}) + \gamma_1(\mathbb{S}_W,\mathcal{L}_{A,B}^{1/2} d_{\infty_2}) \lesssim \mathcal{L}_{A,B}^{1/2} \gamma_1(\mathbb{S}_2^{d-1}, d_{\infty_1}),
\end{align*}
A classical covering number estimate for the unit sphere gives $\mathcal{N}(\mathbb{S}_2^{d-1},d_{\infty_1} ,u) \leq \left(\frac{3}{u}\right)^{d}$ (see e.g. \cite{vershynin_2018}). Plugged in the last expression we obtain 
\begin{align*}
    \gamma_1(\mathbb{S}_W,|\cdot|_{S_\infty}) &\lesssim \mathcal{L}_{A,B}^{1/2} \gamma_1(\mathbb{S}, d_{\infty_1}) \lesssim  \mathcal{L}_{A,B}^{1/2} \left(\int_{0}^{2} \log(\mathcal{N}(\mathbb{S}_2^{d-1},d_{\infty_1} ,u))du \right)\\
    & \lesssim  \mathcal{L}_{A,B}^{1/2} d \int_{0}^{2} \log \frac{3}{u} du \lesssim \mathcal{L}_{A,B}^{1/2} d.
\end{align*}
Collectively, this last estimate with the results \eqref{Eq: result 1}, \eqref{Eq: result 1}, and \eqref{Eq: result 3} gives the desired bound
\begin{align*}
    \left(\E \left|\Psi^{-1/2}(\sum \limits_{i=1}^{N-1} x_i\varepsilon_i^*)\right|_{S_\infty}^2 \right)^{1/2} &\lesssim  \mathcal{L}_{A,B}^{1/2} d .
\end{align*}

\end{proof}

\section*{Appendix B: Proof of the change of variable formula}
\begin{proof}[Proof of Proposition \eqref{Change of measure theorem}]
The parametrizations of the set of matrices are related through the singular values decomposition given as $A - sI_d = U\Sigma V$ where
\begin{equation*}
    \Sigma = \diag(\sigma_1,\dots,\sigma_d).
\end{equation*}
We recall that the singular values $\sigma_1,\dots,\sigma_d$ are the positive square roots of the solutions of the polynomial equation $0 = \det(\lambda I_d - (A-sI)(A-sI)^*)$. Thus the set of matrices with non-distinct singular values can be identified with the set of such polynomials with non-distinct zeros. Since the prior $\Pi_{s,\epsilon}$ is absolutely continuous to the Lebesgue measure $dA$, this set is of measure 0. Hence in what follows we assume without loss of generality that all the singular values of $A$ are distinct.

Let the set of ordered singular values be
\begin{equation*}
    \mathbb{R}_>^d = \{((\sigma_1,\sigma_2,\dots,\sigma_d) \ | \ 0 < \sigma_1<\sigma_2<\dots<\sigma_d\},
\end{equation*}
which is a $d$-dimensional manifold on which we take the natural parametrization $\mathbb{R}_>^d \to \mathcal{M}_{d \times d}(\mathbb{R})$ for $\Sigma_{\sigma} = \diag(\sigma_1,\sigma_2,\dots,\sigma_d)$. Let also $O(d)$ the set of orthogonal matrices on which we impose that the first non-zero entry of each column is positive, this is a $M = d(d-1)/2$-dimensional manifold on which we take two parametrizations $p_u: \mathbb{R}^M \to O(d)$ for $U_{p_u}$ and $q_v: \mathbb{R}^M \to O(d)$ for $V_{q_v}$ (see \cite{Lang1999} for the differential geometric notions). We denote the parameter by
\begin{equation*}
    \theta = (p_u,s_1,\dots,s_d,p_v) \in  O(d) \times \mathbb{R}_>^d \times O(d).
\end{equation*}

In \eqref{rewriting of the prior}, we wrote the density $\Pi_{s,\epsilon}(A)$ as product of terms $ Z_{i,s,\epsilon}(\epsilon  - \sigma_i)^2 \mathds{1}(\sigma_i \in [0 \ \epsilon])$, for any integrable function of $A$, we have
\begin{align*}
    &\E(f(A)) = \int f(A) \Pi_{s,\epsilon}(A)dA = \int f(A_\theta) \Pi_{s,\epsilon}(A_\theta)\left|\frac{\partial A}{\partial \theta}(A_\theta)\right|d\theta \\
    &= \int f(A_\theta)  \left|\frac{\partial A}{\partial \theta}(A_\theta)\right|\left( \prod \limits_{i=1}^d Z_{i,s,\epsilon}(\epsilon - \sigma_i)^2\mathds{1}(\sigma_i \in [0 \ \epsilon])d\sigma_i\right)dp_udp_v,
\end{align*}
where $\left|\frac{\partial A}{\partial \theta}(A_\theta)\right|$ is the Jacobian determinant for the change of variable of $\theta$ to $A$.

 Thus, the Jacobian determinant $\left|\frac{\partial A}{\partial \theta}(A_\theta)\right|$ is the Jacobian of the map
\begin{equation*}
    \eta: (p_u,\sigma,q_v) \in O(d) \times \mathbb{R}_>^d \times O(d) \to \eta(p_u,\sigma,q_v) = U_{p_u}\Sigma_\sigma V_{q_v}^* \in \mathcal{M}_{d \times d}(\mathbb{R}).  
\end{equation*}
Since we consider $O(d)$ restricted to matrices with the first non-zero entry positive, this map is a smooth bijection. Differentiating the matrix $A$ with respect to the parameters we obtain
\begin{align*}
    \frac{\partial A}{\partial p_u} = \frac{\partial U}{\partial p_u} \Sigma V^*,\qquad \frac{\partial A}{\partial q_u} =  U \Sigma \frac{\partial V^*}{\partial q_v}, \qquad \frac{\partial A}{\partial \sigma} = U\frac{\partial \Sigma}{\partial \sigma}  V^*.
\end{align*}
Now, since $V \in O(d)$, we have
\begin{align*}
    0 = \frac{\partial VV^*}{\partial q_v} = \frac{\partial V}{\partial q_v} V^* + V\frac{V^*}{\partial q_v}. 
\end{align*}
Therefore,
\begin{align*}
     \frac{V^*}{\partial q_v} = -V^*\frac{\partial V}{\partial q_v} V^*,
     \end{align*}
Thus, we obtain the system
\begin{empheq}[left=\empheqlbrace]{equation}\label{Eq: Differential system for change of coordinates}
    \begin{aligned}
     & U^* \frac{\partial A}{\partial p_u} V= U^*\frac{\partial U}{\partial p_u} \Sigma, \\
     & U^*\frac{\partial A}{\partial q_u} V=   -\Sigma V^*\frac{\partial V}{\partial q_v},\\
     &U^* \frac{\partial A}{\partial \sigma} V= \frac{\partial \Sigma}{\partial \sigma}.
    \end{aligned}
\end{empheq}
From here onward we denote the vectorized version of any matrix $B$ by the symbol $\overrightarrow{B}$. We define the matrix representation of the linear operator $X \in \mathcal{M}_{d \times d}(\mathbb{R})$ representing the linear map $A \to B$ such that $B = U^* A V$ which in vector format is equivalent to $X \overrightarrow{A} = \overrightarrow{B}$. we have
\begin{align*}
    \innerl{X \overrightarrow{A}}{X \overrightarrow{A}} = \innerl{U^* A V}{U^* A V} = \innerl{A}{A} = \innerl{\overrightarrow{A}}{\overrightarrow{A}}.
\end{align*}
Hence, for all $A$ we have $\innerl{ \overrightarrow{A}}{X^* X \overrightarrow{A}} = \innerl{\overrightarrow{A}}{\overrightarrow{A}} $ and $|\det(X)|^2 = |\det(X^* X)| =1$. Moreover the system of equations \eqref{Eq: Differential system for change of coordinates} in matrix format becomes
\begin{multline}\label{Eq: Matrix format for change of coordinates}
    X\left[\frac{\partial \overrightarrow{A}}{\partial \sigma_1},\dots , \frac{\partial \overrightarrow{A}}{\partial \sigma_d} ,\frac{\partial \overrightarrow{A}}{\partial p_{u_1}}, \dots ,\frac{\partial \overrightarrow{A}}{\partial p_{u_M} } ,\frac{\partial \overrightarrow{A}}{\partial q_{v_1}}, \dots ,\frac{\partial \overrightarrow{A}}{\partial q_{v_M} } \right] =\\
    \left[\frac{\partial \overrightarrow{\Sigma}}{\partial \sigma_1},\dots , \frac{\partial \overrightarrow{\Sigma}}{\partial \sigma_d} ,\overrightarrow{S_{u_1}\Sigma}, \dots ,\overrightarrow{S_{u_M}\Sigma}  ,\overrightarrow{\Sigma S_{v_1}}, \dots ,\overrightarrow{\Sigma S_{v_M}}   \right]
\end{multline}
with the definitions $S_{u_i} = U^*\frac{\partial U}{\partial p_{u_i}}$ and $S_{v_i} = V^*\frac{\partial V}{\partial p_{v_i}}$. Since the Jacobin determinant of $X$ is $1$, by taking the determinant of both sides of equation \eqref{Eq: Matrix format for change of coordinates} we obtain $\left|\frac{\partial A}{\partial \theta}(A_\theta)\right| $ is equal to the determinant of the last matrix. The entries of the last matrix are 
\begin{align*}
    &\left(\frac{\partial \overrightarrow{\Sigma}}{\partial \sigma_k}\right)_{i,j} = \left(\frac{\partial \Sigma}{\partial \sigma_k}\right)_{i,j} = \delta_{i,j}\delta_{j,k}, \\
    &\left(\overrightarrow{S_{u_k}\Sigma} \right)_{i,j} =\left(U^*\frac{\partial U}{\partial p_{u_k}} \Sigma \right)_{i,j} = \sigma_j \left(U^*\frac{\partial U}{\partial p_{u_k}} \right)_{i,j},\\
    &\left(\overrightarrow{\Sigma S_{v_k}}\right)_{i,j} =\left(-\Sigma V^*\frac{\partial V}{\partial q_{v_k}} \right)_{i,j} = -\sigma_i\left( V^*\frac{\partial V}{\partial q_{v_k}} \right)_{i,j}.
\end{align*}
In matrix form, this defines a block matrix of the form
\begin{equation*}
    \begin{bmatrix}
    I_d & & \\
    & S_I & \\
    & & &S_{II}
    \end{bmatrix}
\end{equation*}
where $S_I \in \mathcal{M}_{M \times M}(\mathbb{R})$ is a matrix of columns given by \begin{equation*}
    \sigma_j\Big[ \left(U^*\frac{\partial U}{\partial p_{u_1}} \right)_{i,j}, \left(U^*\frac{\partial U}{\partial p_{u_2}} \right)_{i,j} , \dots ,  \left(U^*\frac{\partial U}{\partial p_{u_M}} \right)_{i,j}\Big]
\end{equation*} 
which depends only on the $\sigma_i$ and $p_u$ and $S_{II} \in \mathcal{M}_{M \times M}(\mathbb{R})$ is a matrix of columns given by 
\begin{equation*}
    \sigma_i\Big[ -\left( V^*\frac{\partial V}{\partial q_{v_1}} \right)_{i,j}, -\left( V^*\frac{\partial V}{\partial q_{v_2}} \right)_{i,j} , \dots ,-\left( V^*\frac{\partial V}{\partial q_{v_M}} \right)_{i,j}\Big].
\end{equation*}
which depends only on the $\sigma_i$ and $p_v$. From the above considerations, it is easy to see that 
\begin{align*}
    \left|\frac{\partial A}{\partial \theta}(A_\theta)\right| = \prod \limits_{i= 1}^d \sigma_i^{d-1} g(p_u)h(p_v),
\end{align*}
where $g(p_u)$ is the absolute value of the determinant of the matrix with columns

$$\Big[ \left(U^*\frac{\partial U}{\partial p_{u_1}} \right)_{i,j}, \left(U^*\frac{\partial U}{\partial p_{u_2}} \right)_{i,j} , \dots ,  \left(U^*\frac{\partial U}{\partial p_{u_M}} \right)_{i,j}\Big]$$
and
$h(p_v)$ is the absolute value of the determinant of the matrix with columns
$$\Big[ -\left( V^*\frac{\partial V}{\partial q_{v_1}} \right)_{i,j}, -\left( V^*\frac{\partial V}{\partial q_{v_2}} \right)_{i,j} , \dots ,-\left( V^*\frac{\partial V}{\partial q_{v_M}} \right)_{i,j}\Big].
$$
Therefore, with $dU = g(p_u)dp_u$ and $dV = h(p_v)dp_v$, we have 
\begin{align*}
    &\E(f(A)) = \int f(A) \Pi_{s,\epsilon}(A)dA \\
    &= \int f(A_\theta) \Pi_{s,\epsilon}(A_\theta)\prod \limits_{i= 1}^d \sigma_i^{d-1} g(p_u)h(p_v)d\theta \\
    &= \int f(A_\theta)  \left( \prod \limits_{i=1}^d Z_{i,s,\epsilon}(\epsilon - \sigma_i)^2\sigma_i^{d-1}\mathds{1}(\sigma_i \in [0 \ \epsilon])d\sigma_i\right)g(p_u)h(p_v)dp_udp_v\\
    &= \int f(A_\theta)  \left( \prod \limits_{i=1}^d Z_{i,s,\epsilon}(\epsilon - \sigma_i)^2\sigma_i^{d-1}\mathds{1}(\sigma_i \in [0 \ \epsilon])d\sigma_i\right)dUdV.
\end{align*}
Moreover, recalling that
\begin{equation*}
    \Pi_{s,\epsilon}(A) = Z_{s,\epsilon} \det(\epsilon I_d - ((A-sI_d)(A-sI_d)^*)^{1/2})^2 \mathds{1}(A \in B_\infty\left(s, \epsilon \right))
\end{equation*}
is the density of a probability measure supported on $B_\infty\left(s, \epsilon \right)$ and taking $f(A) = \mathds{1}\{A \in B_\infty\left(s, \epsilon \right)\}$ we get
\begin{equation*}
    1 = \int_{B_\infty\left(s, \epsilon \right)}   \left( \prod \limits_{i=1}^d Z_{i,s,\epsilon}(\epsilon - \sigma_i)^2\sigma_i^{d-1}\mathds{1}(\sigma_i \in [0 \ \epsilon])d\sigma_i\right)dUdV,
\end{equation*}
meaning that $dU$ and $dV$ are both probability measures and for all $i \in [0 \ d]$ we have
\begin{equation*}
    Z_{i,s,\epsilon} = \left(\int \limits_0^\epsilon (\epsilon - \sigma_i)^2\sigma_i^{d-1}d\sigma_i\right)^{-1} =  \frac{d(d+1)(d+2)}{2\epsilon^{d+2}}.
\end{equation*}

\end{proof}

\section*{Appendix C: Technical results}
\begin{proposition}\label{control of nuclear norm by l2 norm}
For $(u_1,v_1)$ and $(u_2,v_2)$ in $\mathbb{S}_2^{d-1}\times \mathbb{S}_2^{d-1}$ the following norm inequality holds:
\begin{equation*}
    |u_1v_1^* - u_2v_2^*|_{S_\infty} \leq |u_1 - u_2|_2 + |v_1 -v_2|_2. 
\end{equation*}
\end{proposition}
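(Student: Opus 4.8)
The plan is to exploit the rank-one structure of $u_1v_1^*$ and $u_2v_2^*$ via a telescoping decomposition, combined with the elementary fact that the operator norm of a rank-one matrix factorizes: for any $a,b \in \mathbb{R}^d$ one has $|ab^*|_{S_\infty} = |a|_2|b|_2$, since $|ab^*x|_2 = |a|_2|\langle b,x\rangle| \le |a|_2|b|_2|x|_2$ for all $x$, with equality at $x = b/|b|_2$.

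First I would write the difference as
\begin{equation*}
    u_1v_1^* - u_2v_2^* = u_1(v_1-v_2)^* + (u_1-u_2)v_2^*,
\end{equation*}
obtained by adding and subtracting $u_1 v_2^*$. Applying the triangle inequality for $|\cdot|_{S_\infty}$ and then the rank-one norm identity gives
\begin{equation*}
    |u_1v_1^* - u_2v_2^*|_{S_\infty} \le |u_1(v_1-v_2)^*|_{S_\infty} + |(u_1-u_2)v_2^*|_{S_\infty} = |u_1|_2|v_1-v_2|_2 + |u_1-u_2|_2|v_2|_2.
\end{equation*}

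Finally, since $\mathbb{S}_2^{d-1} = \{x \in \mathbb{R}^d : |x|_2 \le 1\}$ is the closed unit ball, we have $|u_1|_2 \le 1$ and $|v_2|_2 \le 1$, so both coefficients may be dropped, yielding $|u_1v_1^* - u_2v_2^*|_{S_\infty} \le |u_1-u_2|_2 + |v_1-v_2|_2$, which is the claim. There is no genuine obstacle here; the only point worth keeping in mind is that $\mathbb{S}_2^{d-1}$ denotes the unit ball rather than the sphere, so the relevant vectors have Euclidean norm at most one — precisely what makes the stated right-hand side an upper bound. One could equally split off $u_2 v_1^*$ instead of $u_1 v_2^*$; the argument is symmetric and gives the same bound.
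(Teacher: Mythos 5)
Your proof is correct, and it takes a different route from the paper's. You telescope through the cross term $u_1v_2^*$, apply the triangle inequality for $|\cdot|_{S_\infty}$, and use the rank-one identity $|ab^*|_{S_\infty}=|a|_2|b|_2$; the only hypothesis you need is $|u_1|_2\le 1$ and $|v_2|_2\le 1$, so your argument indeed covers the unit ball and not just the sphere. The paper instead expands the squared operator norm directly: writing $|u_1v_1^*-u_2v_2^*|_{S_\infty}^2=\sup_{|a|_2\le 1}\bigl(|u_1|_2^2\langle v_1,a\rangle^2+|u_2|_2^2\langle v_2,a\rangle^2-2\langle u_1,u_2\rangle\langle v_1,a\rangle\langle v_2,a\rangle\bigr)$ and using $2\langle u_1,u_2\rangle=2-|u_1-u_2|_2^2$ (which requires $|u_1|_2=|u_2|_2=1$, i.e.\ the sphere, consistent with how the lemma is invoked), it regroups the terms into $\langle v_1-v_2,a\rangle^2+|u_1-u_2|_2^2\langle v_1,a\rangle\langle v_2,a\rangle$ and obtains the sharper intermediate bound $|u_1v_1^*-u_2v_2^*|_{S_\infty}\le\bigl(|u_1-u_2|_2^2+|v_1-v_2|_2^2\bigr)^{1/2}$ before relaxing to the sum. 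So the paper's computation buys a root-sum-of-squares estimate (unused downstream, where only the sum is needed in \eqref{d infinit upper bound}), while your telescoping is shorter, more elementary, and valid under weaker normalization assumptions; both yield the stated inequality.
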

\begin{proof}
Take $(u_1,v_1)$ and $(u_2,v_2)$ both in $\mathbb{S}_2^{d-1}\times \mathbb{S}_2^{d-1}$ and note that
    \begin{align*}
        |u_1v_1^* - u_2v_2^*|_{S_\infty}^2 &= \sup \limits_{a \in \mathbb{S}_2^{d-1}} | u_1\innerl{v_1}{a} - u_2\innerl{v_2}{a}|_2^2\\
        &= \sup \limits_{a \in \mathbb{S}_2^{d-1}} | u_1|_2^2\innerl{v_1}{a}^2 + | u_2|_2^2\innerl{v_2}{a}^2 - 2\innerl{u_1}{u_2}\innerl{v_1}{a}\innerl{v_2}{a}\\
        &= \sup \limits_{a \in \mathbb{S}_2^{d-1}} \innerl{v_1}{a}^2 + \innerl{v_2}{a}^2 +(|u_1-u_2|_2^2-2)\innerl{v_1}{a}\innerl{v_2}{a}\\
        &= \sup \limits_{a \in \mathbb{S}_2^{d-1}} \innerl{v_1 - v_2}{a}^2 +|u_1-u_2|_2^2\innerl{v_1}{a}\innerl{v_2}{a}\\
        &\leq \sup \limits_{a \in \mathbb{S}_2^{d-1}} \innerl{v_1 - v_2}{a}^2 +|u_1-u_2|_2^2\\
        &\leq  |v_1-v_2|_2^2 +|u_1-u_2|_2^2.
    \end{align*}
Taking the square root we obtain
    \begin{equation*}
        |u_1v_1^* - u_2v_2^*|_{S_\infty} \leq ( |u_1-u_2|_2^2 + |v_1-v_2|_2^2)^{1/2} \leq  |u_1-u_2|_2 + |v_1-v_2|_2.
    \end{equation*}
\end{proof}
The next lemma provides lower and upper estimates for $\mathcal{L}_{A,B}$ for a general linear state space model parametrized by a matrix $A$.

\begin{lemma}\label{Upper and lower bounds for LA}
If the matrix $A$ satisfies Assumption \ref{assumption} and the matrix $B$ is full rank, then for all $\alpha \in (0 \ 1)$ and $N$ such that
\begin{equation*}
    N \geq \frac{1}{\alpha(1-s_{\min}^2(A_u^{-1})\vee s_{\min}^2(A_s) )},
    \end{equation*}
we have 
\begin{equation*}
    \mathcal{L}_{A,B} \leq \frac{\left(\frac{1}{1-|A_u^{-1}|_{S_\infty}} + \frac{1}{1-|A_s|_{S_\infty}} + 1 \right)
    }{\frac{(1-\alpha)N}{1-s_{\min}^2(A_u^{-1})} \wedge \frac{(1-\alpha)N}{1-s_{\min}^2(A_s)} \wedge \frac{1}{3}} \cond^2(\tilde{B}).
\end{equation*}

\end{lemma}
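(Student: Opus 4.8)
The plan is to pass to the Jordan coordinates of \eqref{Decomposition of a matrix}, use them to turn $\mathcal{L}_{A,B}$ into a block-diagonal problem, and then estimate the unstable, stable and limit-stable blocks separately. Write $A=S\tilde A S^{-1}$, $B=S\tilde B$ with $\tilde A=\diag(A_u,A_s,A_{\text{ls}})$, and set $G_s:=\big(\sum_{k=0}^{N-2}A^k e^{j2\pi ks}\big)B$. Then $A^kB=S\tilde A^k\tilde B$, so $\Psi=S\tilde\Psi S^*$ with $\tilde\Psi=\sum_{k=1}^{N-1}(N-k)\tilde A^{k-1}\tilde B\tilde B^*\tilde A^{*k-1}$ and $G_s=S\tilde M_s\tilde B$ with $\tilde M_s:=\sum_{k=0}^{N-2}\tilde A^k e^{j2\pi ks}$. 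Since $|\Psi^{-1/2}G_s|_{S_\infty}^2$ is the spectral radius of $\Psi^{-1}G_sG_s^*$, and $\Psi^{-1}G_sG_s^*=S^{-*}\big(\tilde\Psi^{-1}\tilde M_s\tilde B\tilde B^*\tilde M_s^*\big)S^*$ is similar to $\tilde\Psi^{-1}\tilde M_s\tilde B\tilde B^*\tilde M_s^*$, we obtain $\mathcal{L}_{A,B}=\sup_{s\in[0,1]}|\tilde\Psi^{-1/2}\tilde M_s\tilde B|_{S_\infty}^2$.

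Next I would remove $\tilde B$ and split into blocks. Since $\tilde B$ is full rank, $s_{\min}^2(\tilde B)I_d\leq \tilde B\tilde B^*\leq s_{\max}^2(\tilde B)I_d$, so with $P:=\sum_{k=0}^{N-2}(N-1-k)\tilde A^k\tilde A^{*k}$ one has $\tilde M_s\tilde B\tilde B^*\tilde M_s^*\leq s_{\max}^2(\tilde B)\tilde M_s\tilde M_s^*$ and $\tilde\Psi\geq s_{\min}^2(\tilde B)P$; by monotonicity of $X\mapsto\|Z^{-1/2}XZ^{-1/2}\|$ this gives $\mathcal{L}_{A,B}\leq \cond^2(\tilde B)\,\sup_s|P^{-1/2}\tilde M_s|_{S_\infty}^2$. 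Both $P$ and $\tilde M_s$ are block-diagonal with blocks indexed by $b\in\{u,s,\text{ls}\}$, hence $|P^{-1/2}\tilde M_s|_{S_\infty}=\max_b|P_b^{-1/2}M_{s,b}|_{S_\infty}$, and it suffices to bound each $\sup_s|P_b^{-1/2}M_{s,b}|_{S_\infty}^2$ by a ratio $n_b/d_b$ with $(n_{\text{ls}},d_{\text{ls}})=(1,\tfrac13)$, $d_s=\tfrac{(1-\alpha)N}{1-s_{\min}^2(A_s)}$, $d_u=\tfrac{(1-\alpha)N}{1-s_{\min}^2(A_u^{-1})}$, and then to use $\max_b n_b/d_b\leq(\sum_b n_b)/(\min_b d_b)$ together with $\frac{1}{1-x}\vee\frac{1}{1-y}=\frac{1}{1-x\vee y}$. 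Under Assumption \ref{assumption} (cf.\ Assumption \ref{second assumption}) the blocks may be taken so that $|A_s|_{S_\infty}<1$, $|A_u^{-1}|_{S_\infty}<1$ and $A_{\text{ls}}$ is diagonal with $A_{\text{ls}}A_{\text{ls}}^*=I$.

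For the \emph{limit-stable} block, $A_{\text{ls}}^k(A_{\text{ls}}^k)^*=I$ for all $k$, so $P_{\text{ls}}=\tfrac{N(N-1)}2 I$, while each diagonal entry of $M_{s,\text{ls}}$ is a sum of $N-1$ unit-modulus numbers, whence $|M_{s,\text{ls}}|_{S_\infty}\leq N-1$ and $\sup_s|P_{\text{ls}}^{-1/2}M_{s,\text{ls}}|_{S_\infty}^2\leq\tfrac{2(N-1)}{N}<3$. For the \emph{stable} block, the hypothesis $N\geq\tfrac{1}{\alpha(1-s_{\min}^2(A_s))}$ makes the elementary geometric-series estimate yield $P_s\geq\Big(\sum_{k=0}^{N-2}(N-1-k)\,s_{\min}^2(A_s)^k\Big)I\geq\tfrac{(1-\alpha)N}{1-s_{\min}^2(A_s)}I$; combining this with the decay $|A_s^k|_{S_\infty}\leq|A_s|_{S_\infty}^k$ and the triangle inequality $|P_s^{-1/2}M_{s,s}|_{S_\infty}\leq\sum_{k=0}^{N-2}\lambda_{\min}(P_s)^{-1/2}|A_s|_{S_\infty}^k\leq\lambda_{\min}(P_s)^{-1/2}(1-|A_s|_{S_\infty})^{-1}$ produces the block bound uniformly in $s$. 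The \emph{unstable} block is reduced to the stable one via the reflection $A_u^k=A_u^{N-2}(A_u^{-1})^{N-2-k}$: factoring $A_u^{N-2}$ out of both $P_u$ and $M_{s,u}M_{s,u}^*$ shows $\sup_s|P_u^{-1/2}M_{s,u}|_{S_\infty}^2=\sup_s|Q^{-1/2}\widetilde M_s|_{S_\infty}^2$ with $Q=\sum_{j=0}^{N-2}(j+1)(A_u^{-1})^j(A_u^{-1})^{*j}$ and $\widetilde M_s=\sum_{j=0}^{N-2}e^{-j2\pi js}(A_u^{-1})^j$, and since $C:=A_u^{-1}$ obeys $|C|_{S_\infty}<1$ and $s_{\min}(C)=s_{\min}(A_u^{-1})$, the same argument (using $N\geq\tfrac{1}{\alpha(1-s_{\min}^2(A_u^{-1}))}$ and the weights $j+1\geq1$) gives the unstable block bound. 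Assembling the three via the max-by-sum step and reinstating $\cond^2(\tilde B)$ then gives the displayed inequality.

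The hard part is the stable/unstable block estimate: one must control $\sup_s|P_b^{-1/2}M_{s,b}|_{S_\infty}^2$ with the sharp joint dependence on $|A_b|_{S_\infty}$ (in the numerator), on $s_{\min}(A_b)$, and on $N$ (in the denominator), and verify that the geometric-series lower bound on $\lambda_{\min}(P_b)$ becomes effective exactly at the stated threshold for $N$; reconciling the exact constants and the correct power of $(1-|A_b|_{S_\infty})$ with the claimed form — as opposed to an estimate that is merely of the same order — is where the genuine bookkeeping lies, whereas the change of basis, the removal of $\tilde B$, and the limit-stable block are immediate.
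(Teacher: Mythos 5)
Your strategy is essentially the paper's: pass to the Jordan coordinates of \eqref{Decomposition of a matrix}, strip off $\tilde B$ via $s_{\min}(\tilde B)$ and $s_{\max}(\tilde B)$ to produce the factor $\cond^2(\tilde B)$, treat the unstable, stable and limit-stable blocks separately, and invoke Lemma \ref{Estimate for double geometric series}(b) for the $N$-dependent denominators. The only structural difference is that the paper renormalizes once with $D=\diag(A_u^{-N},I_s,\tfrac1N I_{\mathrm{ls}})S^{-1}$ and then bounds $\sup_s|\cdot|_{S_\infty}^2$ and $\lambda_{\min}(\Psi_D)$ globally, whereas you bound each block ratio and assemble them via $\max_b n_b/d_b\le(\sum_b n_b)/(\min_b d_b)$. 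Your reduction of $\mathcal L_{A,B}$ to $\sup_s|P^{-1/2}\tilde M_s|_{S_\infty}^2$ by similarity, the removal of $\tilde B$, and the limit-stable and stable blocks are all correct.

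The unstable block is where the argument fails. After you factor $A_u^{N-2}$ out of $P_u$, the weight attached to $(A_u^{-1})^{j}(A_u^{-1})^{*j}$ in $Q$ is $j+1$, not $N-1-j$: the weights are reversed relative to the exponents. Hence $\lambda_{\min}(Q)\ge\sum_{j=0}^{N-2}(j+1)\,s_{\min}^{2j}(A_u^{-1})$, and this sum is bounded above by $(1-s_{\min}^2(A_u^{-1}))^{-2}$ \emph{uniformly in $N$}; it cannot be bounded below by $\tfrac{(1-\alpha)N}{1-s_{\min}^2(A_u^{-1})}$ once $N\ge\tfrac{1}{(1-\alpha)(1-s_{\min}^2(A_u^{-1}))}$, a range forced by your own hypothesis on $N$ whenever $\alpha<1/2$. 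Lemma \ref{Estimate for double geometric series}(b) applies to $\sum_j(N-1-j)a^j$, not to $\sum_j(j+1)a^j$, so ``the same argument'' does not deliver $d_u=\tfrac{(1-\alpha)N}{1-s_{\min}^2(A_u^{-1})}$. (Scalar sanity check: for $d=1$, $A=a>1$, $B=1$, one computes $\mathcal L_{A,B}\to(1+1/a)^2$ as $N\to\infty$, so no unstable block bound decaying like $1/N$ can hold.) In fairness, the paper's own proof has exactly the same defect: rewriting its double sum over $\Theta(i,k)$ for the unstable block also yields increasing weights on the powers of $s_{\min}(A_u^{-1})$, so the claimed domination by $\sum_{i=0}^{N-2}(N-1-i)s_{\min}^{2i}(A_u^{-1})$ is false there too. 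A second, smaller discrepancy, which you flag yourself but do not resolve: every route through $|M_{s,b}|_{S_\infty}\le(1-|A_b|_{S_\infty})^{-1}$ and $\lambda_{\min}(P_b)^{-1}$ produces $(1-|A_b|_{S_\infty})^{-2}$ in the numerator, i.e.\ the square of what the lemma asserts; the paper's displayed chain likewise produces the square of the sum and silently drops it in its last line. So your proposal, carried out carefully, proves a statement of the lemma's shape only with squared numerator terms and with the unstable denominator replaced by a quantity that does not grow with $N$ — which is also all that the paper's own argument actually establishes.
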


\begin{proof}
Define
\begin{equation*}
    D = \begin{bmatrix}
    A_u^{-N}& & \\
    &I_s & \\
    & &\frac{1}{N}I_{\text{ls}}
    \end{bmatrix}S^{-1} \quad \text{and} \quad \Psi_D = D\Psi D^*.
\end{equation*}
Observe that
\begin{align}
    \mathcal{L}_{A,B} &= \sup_{s \in [0,\ 1]} |B^* (\sum_{k=0}^N  A^{*k}  e^{-j 2\pi k s} )D^* (D^{*-1}\Psi^{-1} D^{-1}) D(\sum_{k=0}^N  A^k  e^{j 2\pi k s} ) B|_{S_\infty} \nonumber \\
    &= \sup_{s \in [0,\ 1]} | \Psi_D^{-1/2} (\sum_{k=0}^N  D\tilde{A}^k  e^{j 2\pi k s} ) \tilde{B}|_{S_\infty}^2 \leq |\Psi_D^{-1} |_{S_\infty}\sup_{s \in [0\ 1]} | (\sum_{k=0}^{N-2}  D\tilde{A}^k  e^{j 2\pi k s} ) \tilde{B}|_{S_\infty}^2 \nonumber\\
    &= \frac{ \sup_{s \in [0\ 1]} | \sum_{k=0}^{N-2}  D \tilde{A}^k  e^{j 2\pi k s} |_{S_\infty}^2 |\tilde{B}|_{S_\infty}^2}{\lambda_{\min}(\Psi_D )}. \label{Eq: upper bound on the lipschitz constant for the evolution}
\end{align}
An upper bound for the supremum appearing in the last expression is obtained through a separation of the contribution of the each of the stable, unstable, and limit stable parts of the matrix $A$ as follows: 
\begin{align*}
    \sup_{s \in [0\ 1]} | \sum_{k=0}^{N-2}  D\tilde{A}^k  e^{j 2\pi k s} |_{S_\infty} &= \sup_{s \in [0\ 1]} \left| \sum_{k=0}^{N-2} e^{j 2\pi k s} \begin{bmatrix}
    {(A_u^{-1})}^{N-2-k}& & \\
    &A_s^k \quad & \\
    & &\quad \frac{1}{N-1}A_{\text{ls}}^k 
    \end{bmatrix}  \right|_{S_\infty} \\
    &= \sup_{s \in [0\ 1]} \left|  \begin{bmatrix}
    \sum_{k=0}^{N-2} e^{j 2\pi k s} {(A_u^{-1})}^{N-2-k}\\
    \sum_{k=0}^{N-2} e^{j 2\pi k s} A_s^k \\
    \sum_{k=0}^{N-2} e^{j 2\pi k s} \frac{1}{N-1}A_{\text{ls}}^k
    \end{bmatrix}\right|_{S_\infty} \\
    &\leq \left(\frac{1}{1-|A_u^{-1}|_{S_\infty}} + \frac{1}{1-|A_s|_{S_\infty}} + 1 \right)|B|_{S_\infty}.
\end{align*}
Consider the matrices
$$
\Theta(i,k):=\begin{bmatrix}
    (A_u^{-1})^{N-2-i+k}& & \\
    &A_s^{i-1-k} & \\
    & & \frac{1}{N-1}A_{\text{ls}}^{i-1-k} 
    \end{bmatrix}.
$$
The term involving the least eigenvalue in \eqref{Eq: upper bound on the lipschitz constant for the evolution} can be simplified by noting that
\begin{align*}
    &\lambda_{\min}\left( \sum_{i=1}^{N-1}\sum_{k=0}^{i-1} \Theta(i,k) \tilde{B} \tilde{B}^*  \Theta(i,k)^*\right) \geq \lambda_{\min} (\tilde{B} \tilde{B}^*) \sum_{i=1}^{N-1}\sum_{k=0}^{i-1} \lambda_{\min} (\Theta(i,k) \Theta(i,k)^*) \\
    &\geq s_{\min}^2(\tilde{B})\left(\sum \limits_{i=0}^{N-2} (N-1-i)s_{\min}^{2i}(A_u^{-1}) \wedge \sum \limits_{i=0}^{N-2} (N-1-i)s_{\min}^{2i}(A_s) \wedge \frac{1}{3} \right)\\
    &\geq s_{\min}^2(\tilde{B}) \left(\frac{(1-\alpha)N}{1-s_{\min}^2(A_u^{-1})} \wedge \frac{(1-\alpha)N}{1-s_{\min}^2(A_s)} \wedge \frac{1}{3} \right),
\end{align*}
where in the last inequality we used the lower estimate from Lemma \ref{Estimate for double geometric series} in this Appendix valid for
\begin{equation*}
    N \geq \frac{1}{\alpha(1-s_{\min}^2(A_u^{-1}))} \vee \frac{1}{\alpha(1-s_{\min}^2(A_s))}.
\end{equation*}
This last estimate together with Eq. \eqref{Eq: upper bound on the lipschitz constant for the evolution}  give us the result. Namely,
\begin{equation*}
    \mathcal{L}_{A,B} \leq \frac{\left(\frac{1}{1-|A_u^{-1}|_{S_\infty}} + \frac{1}{1-|A_s|_{S_\infty}} + 1 \right)
    }{\frac{(1-\alpha)N}{1-s_{\min}^2(A_u^{-1})} \wedge \frac{(1-\alpha)N}{1-s_{\min}^2(A_s)} \wedge \frac{1}{3}} \frac{|\tilde{B}|^2_{S_\infty}}{s_{\min}^2(\tilde{B})}.
\end{equation*}
\end{proof}
\begin{lemma}\label{Estimate for double geometric series} We have 
    \begin{itemize}
    \item[(a)] \begin{equation*}
        \sum \limits_{i=0}^{N-2} (N-1-i)a^i \leq \begin{cases}
        \frac{N}{1-a} + \frac{1}{(1-a)^2}\quad &\text{if} \quad a \in [0, 1),\\ 
        \frac{a^N}{(a-1)^2}\quad &\text{if} \quad a > 1.
        \end{cases}
    \end{equation*}
    
   \item[(b)] For $a$ and $\alpha \in (0\ 1)$, 
    \begin{equation*}
        \sum \limits_{i=0}^{N-2} (N-1-i)a^i \geq \frac{(1-\alpha) N}{1-a} \quad 
        \text{for} \quad N \geq  \frac{1}{\alpha(1-a)}.
    \end{equation*}
    \end{itemize}
\end{lemma}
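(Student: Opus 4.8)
The plan is to reduce both parts to one combinatorial identity followed by summing a geometric progression. First I would rewrite the weighted sum as a double sum: since $N-1-i$ counts the integers $j$ with $i<j\le N-1$, one has, for every $a\ne 1$ (using the convention $0^0=1$),
$$\sum_{i=0}^{N-2}(N-1-i)a^i=\sum_{j=1}^{N-1}\sum_{i=0}^{j-1}a^i=\sum_{j=1}^{N-1}\frac{1-a^j}{1-a}.$$
Summing the inner geometric series once more yields the closed form
$$\sum_{i=0}^{N-2}(N-1-i)a^i=\frac{N-1}{1-a}-\frac{a(1-a^{N-1})}{(1-a)^2},$$
valid for all $a\in[0,1)\cup(1,\infty)$; both parts follow from elementary estimates on this expression.

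For part (a) with $a\in[0,1)$, the subtracted term is nonnegative, so the sum is at most $\frac{N-1}{1-a}\le\frac{N}{1-a}\le\frac{N}{1-a}+\frac{1}{(1-a)^2}$. For $a>1$ I would instead estimate $\sum_{j=1}^{N-1}\frac{a^j-1}{a-1}\le\frac{1}{a-1}\sum_{j=1}^{N-1}a^j=\frac{a(a^{N-1}-1)}{(a-1)^2}\le\frac{a^N}{(a-1)^2}$, which is the claimed bound.

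For part (b) I would start from the closed form and use the bound $\frac{a(1-a^{N-1})}{(1-a)^2}<\frac{a}{(1-a)^2}$, giving
$$\sum_{i=0}^{N-2}(N-1-i)a^i>\frac{N-1}{1-a}-\frac{a}{(1-a)^2}=\frac{N(1-a)-1}{(1-a)^2}.$$
The hypothesis $N\ge\frac{1}{\alpha(1-a)}$ is exactly $\alpha N(1-a)\ge 1$, hence $N(1-a)-1\ge N(1-a)-\alpha N(1-a)=(1-\alpha)N(1-a)$, and dividing by $(1-a)^2$ gives the desired lower bound $\frac{(1-\alpha)N}{1-a}$.

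The argument is entirely elementary; the only step requiring care is the bookkeeping in part (b): one must \emph{not} bound the factor $a$ in the numerator of $\frac{a(1-a^{N-1})}{(1-a)^2}$ by $1$, since keeping that $a$ is precisely what makes the threshold on $N$ come out as the stated $\frac{1}{\alpha(1-a)}$ rather than something larger. I expect that, and nothing else, to be the main point to get right.
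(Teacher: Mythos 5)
Your proof is correct and follows essentially the same strategy as the paper: obtain the closed form $\sum_{i=0}^{N-2}(N-1-i)a^i=\frac{a^N+(N-1)-Na}{(1-a)^2}$ and then make the elementary estimates, including the same observation in (b) that $N(1-a)-1\geq(1-\alpha)N(1-a)$ under the stated threshold. The only (cosmetic) difference is how the closed form is derived — you swap the order of summation and sum two geometric series, whereas the paper substitutes $b=1/a$ and differentiates $\frac{x^N-1}{x-1}$ — so nothing further is needed.
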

\begin{proof}
(a)\,\, Let $b = 1/a$, since $a \neq 1$ and $a >0$, we have
    \begin{align*}
        \sum \limits_{i=0}^{N-2} (N-1-i)a^i &= a^{N-2} \sum \limits_{i=1}^{N-1} ib^{i-1} = a^{N-2} \sum \limits_{i=1}^{N-1} \frac{db^i}{db} = a^{N-2} \sum \limits_{i=1}^{N-1} \frac{dx^i}{dx} \big|_{x = b}\\
        & = a^{N-2}  \frac{d}{dx}\sum \limits_{i=0}^{N-1} x^i \big|_{x = b} = a^{N-2}  \frac{d}{dx}\frac{x^N - 1}{x-1} \big|_{x = b}\\
        & = a^{N-2}  \frac{Nb^{N-1}(b-1) - (b^N -1)}{(b-1)^2} \\
        & \leq \frac{Nb(b-1) + b^{-(N-2)})}{(b-1)^2}\\
        &\leq \begin{cases}
        \frac{N}{1-a} + \frac{a^N}{(1-a)^2}\quad &\text{if} \quad a \in (0, 1),\\
        \frac{a^N}{(1-a)^2}\quad &\text{if} \quad a > 1.
        \end{cases}
    \end{align*}
   (b)\,\,  We note that the last equality also gives, for all $\alpha \in (0,1)$, 
    \begin{align*}
        \sum \limits_{i=0}^{N-2} (N-1-i)a^i & =   \frac{a^N + (N-1) - Na}{(1-a)^2} \geq  \frac{(1-\alpha) N}{1-a} \quad 
        \text{for} \quad N \geq  \frac{1}{\alpha(1-a)}.
    \end{align*}
\end{proof}

\bibliographystyle{elsarticle-harv} 
\bibliography{main}
\end{document}